\title[Non-backtracking spectrum of universal covers \today]
      {The non-backtracking spectrum of the universal cover of a graph}
\date{November 2007}
\author{Omer Angel}
\address[Omer Angel]{Math\\UofT}
\email{angel@math.utoronto.ca}
\author{Joel Friedman}
\address[Joel Friedman]{CS\\UBC}
\email{jf@cs.ubc.ca}
\author{Shlomo Hoory}
\address[Shlomo Hoory]{Haifa Research Lab.\\IBM}
\email{shlomoh@il.ibm.com}
\thanks{This research was done while OA and SH were at UBC;
  OA held a PIMS postdoctoral fellowship.}
\newcommand{\finiteRS}{{finite ratio system}}
\newcommand{\generalizedRS}{{ratio system}}
\newtheorem{thm}{Theorem}[section]
\newtheorem{lemma}[thm]{Lemma}
\newtheorem{claim}[thm]{Claim}
\newtheorem{prop}[thm]{Proposition}
\newtheorem{coro}[thm]{Corollary}
\newtheorem{note}[thm]{Note}
\theoremstyle{definition}
\newtheorem{defn}[thm]{Definition}
\newcommand{\clmref}[1]{Claim~\ref{C:#1}}
\newcommand{\corref}[1]{Corollary~\ref{C:#1}}
\newcommand{\lemref}[1]{Lemma~\ref{L:#1}}
\newcommand{\thmref}[1]{Theorem~\ref{T:#1}}
\newcommand{\romenum}{\renewcommand{\labelenumi}{(\roman{enumi})}}
\newcommand{\from}{\colon}
\newcommand{\cG}{{\mathcal G}}
\newcommand{\eps}{\varepsilon}
\newcommand{\lam}{\lambda}
\newcommand{\R}{\mathbb{R}}
\newcommand{\C}{\mathbb{C}}
\newcommand{\tA}{\widetilde A}
\newcommand{\tB}{\widetilde B}
\newcommand{\tE}{\widetilde E}
\newcommand{\tG}{\widetilde G}
\newcommand{\tH}{\widetilde H}
\newcommand{\tM}{\widetilde M}
\newcommand{\tV}{\widetilde V}
\newcommand{\gr}{\text{\rm gr}}
\newcommand{\dmin}{{d_{\rm min}}}
\newcommand{\dmax}{{d_{\rm max}}}
\newcommand{\relker}[1]{\ker_{#1} M}
\newcommand{\relnullker}[1]{\ker_{#1}^0 M}
\newcommand{\Tzw}{\begin{psmatrix}[colsep=3mm,rowsep=0mm]
[name=z] z \\[0mm]
[name=w] w
\ncline{z}{w}
\end{psmatrix}}
\newcommand{\Tuxw}{\begin{psmatrix}[colsep=3mm,rowsep=0mm]
[name=u] u & [name=x] x & [name=w] w
\ncline{x}{u} \ncline{x}{w}
\end{psmatrix}}
\newcommand{\Tuxwv}{\begin{psmatrix}[colsep=3mm,rowsep=0mm]
[name=u] u & [name=x] x & [name=w] w \\[0mm]
           & [name=v] v &
\ncline{x}{u} \ncline{x}{v} \ncline{x}{w}
\end{psmatrix}}
\newcommand{\Tuxzw}{\begin{psmatrix}[colsep=2mm,rowsep=0mm]
           & [name=z] z &            \\[0mm]
[name=x] x &            & [name=w] w \\[0mm]
[name=u] u &            &
\ncline{x}{u} \ncline{x}{z} \ncline{z}{w}
\end{psmatrix}}
\begin{document}

\begin{abstract}
  A non-backtracking walk on a graph, $H$, is a directed path of directed
  edges of $H$ such that no edge is the inverse of its preceding edge.
  Non-backtracking walks of a given length can be counted using the
  non-backtracking adjacency matrix, $B$, indexed by $H$'s directed edges
  and related to Ihara's Zeta function.

  We show how to determine $B$'s spectrum in the case where $H$ is a tree
  covering a finite graph. We show that when $H$ is not regular, this
  spectrum can have positive measure in the complex plane, unlike the
  regular case. We show that outside of $B$'s spectrum, the corresponding
  Green function has ``periodic decay ratios.'' The existence of such a
  ``ratio system'' can be effectively checked, and is equivalent to being
  outside the spectrum.

  We also prove that the spectral radius of the non-backtracking walk
  operator on the tree covering a finite graph is exactly $\sqrt\gr$, where
  $\gr$ is the growth rate of the tree. This further motivates the
  definition of the graph theoretical Riemann hypothesis proposed by Stark
  and Terras~\cite{ST}.

  Finally, we give experimental evidence that for a fixed, finite graph,
  $H$, a random lift of large degree has non-backtracking new spectrum near
  that of $H$'s universal cover. This suggests a new generalization of
  Alon's second eigenvalue conjecture.
\end{abstract}

\maketitle


\section{Introduction}

Let $H$ be a finite, simple (i.e., without self-loops or multiple
edges), connected graph. 
The main result of this paper is to describe the spectrum of operators
on the universal cover, $\tH$, of $H$, that are ``lifts'' from
operators on $H$.  Let us briefly describe this result, 
saving precise definitions and statements for later sections.

A {\em cover} of $H$ is a graph homomorphism $\pi\from G\to H$ such that
$\pi$ induces a bijection between the edges incident on $x$ and those
incident on $\pi(x)$. Note that $G$ may be infinite. The {\em universal
  cover} $\tH$ of $H$ is the unique (up to isomorphism) cover of $H$ that
is a tree. A cover of $H$ may be thought of as a graph with the same local
structure as $H$. The universal cover is in several senses an approximation
to typical large covers of $H$.

A {\em connected local operator} on $H$ is a matrix $M$ indexed by $H$'s
vertices, such that if $x\ne y$ then $M_{x,y}\neq0$ iff $x\sim y$,
where $x\sim y$ means that $x$ is adjacent to $y$; $M$ may be viewed as
an operator, in that
for a function $f$ on $H$
we have a function $Mf$ defined by
$(Mf)_x = \sum_{y\sim x} M_{x,y} f_y$. Using the bijection on
neighbourhoods induced by $\pi$, any such operator extends in a canonical
way to a bounded
operator $\tM$ on $\ell^2(\tH)$.  (See Section~\ref{sec:prelim} for
more detailed definitions.) One of our main results is an algebraic
description of the spectrum of $\tM$.

\begin{thm}\label{T:main_rough}
  Fix a finite graph, $H$, and symmetric, connected, local operator, $M$,
  on $H$. There is an explicit (in terms of $H,\lam$) set, $S$, of
  polynomial equations in variables $\{r_e\}_{e\in E(H)}$ taking values in
  $\C\cup\{\infty\}$, such that $\tM-\lam I$ is invertible iff $S$ has a
  solution with norm $\alpha(r)<1$, where $\alpha(r)$ is the largest
  eigenvalue of a matrix whose entries are given via $H$ as polynomials in
  the $\{r_e\}$.
\end{thm}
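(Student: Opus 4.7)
My plan is to construct $(\tM-\lam I)^{-1}$, when it exists, as a Green function built from a finite set of ``edge ratios'' that live on $H$ thanks to the covering symmetry, and, conversely, to manufacture the inverse from any such ratios satisfying $\alpha(r)<1$. For a directed edge $\tilde e=(\tilde x,\tilde y)$ of $\tH$, let $\tH_{\tilde e}$ be the component of $\tilde y$ in $\tH$ after removing the undirected edge $\{\tilde x,\tilde y\}$. On this half-tree, consider a $\lam$-eigenfunction $f$ of $\tM$ normalized by $f(\tilde x)=1$ and set $r_{\tilde e}:=f(\tilde y)\in\C\cup\{\infty\}$. Because $\tH\to H$ is a cover, $r_{\tilde e}$ depends only on the projection $e\in\Edir(H)$, yielding finitely many variables $\{r_e\}$. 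Substituting $f(\tilde z)=r_{(y,z)}\,f(\tilde y)$ for forward neighbours into $(\tM f)(\tilde y)=\lam f(\tilde y)$ gives, for every directed edge $e=(x,y)$ of $H$, the polynomial equation
\[
  r_e\Bigl[\lam-\sum_{\substack{z\sim y\\ z\neq x}} M_{y,z}\,r_{(y,z)}\Bigr]=M_{y,x},
\]
whose conjunction is the system $S$.

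\emph{Necessity.} If $\lam\notin\mathrm{spec}(\tM)$, the Green function $\tG_\lam=(\tM-\lam I)^{-1}$ is bounded on $\ell^2(\tH)$, each half-tree Green function exists, and the ratios it produces satisfy $S$ by the local eigenequation at $\tilde y$. For the norm estimate, fix a root $\tilde x_0$, group vertices $\tilde w$ by the unique non-backtracking path $e_1,\ldots,e_n$ of $\tH$ from $\tilde x_0$ to $\tilde w$, and use the identity $\tG_\lam(\tilde x_0,\tilde w)=\tG_\lam(\tilde x_0,\tilde x_0)\prod_i r_{e_i}$, which is valid because $\tG_\lam(\tilde x_0,\cdot)$ is $\lam$-harmonic off the diagonal. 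Then $\sum_{\tilde w}|\tG_\lam(\tilde x_0,\tilde w)|^2<\infty$ translates into summability of $\sum_n \langle v,(A_r)^n v\rangle$, where $A_r$ is the nonnegative matrix on $\Edir(H)\times\Edir(H)$ with $(A_r)_{e,e'}=|r_{e'}|^2$ on non-backtracking pairs and $0$ otherwise. Hence $\alpha(r):=\rho(A_r)\le 1$, and strict inequality is promoted as discussed below.

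\emph{Sufficiency.} Conversely, given $\{r_e\}$ satisfying $S$ with $\alpha(r)<1$, define $G(\tilde x,\tilde y):=G(\tilde x,\tilde x)\prod_i r_{e_i}$ along the unique non-backtracking path from $\tilde x$ to $\tilde y$, with $G(\tilde x,\tilde x)$ determined by the eigenequation of $\tM$ at $\tilde x$ combined with the outgoing ratios $r_{(x,y)}$. The equations of $S$ together with the symmetry of $M$ then imply $(\tM-\lam I)G=I$ on $\tH$ by direct substitution. Boundedness of $G$ on $\ell^2(\tH)$ follows from a Schur-type test: the row and column sums of $|G|^2$ are dominated termwise by a geometric series in $A_r$, which converges precisely because $\alpha(r)<1$.

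\emph{Main obstacle.} The delicate point is the strict inequality $\alpha(r)<1$ on the necessity side: $\ell^2$ summability gives only $\alpha(r)\le 1$ directly. Promoting this requires exploiting the positive distance from $\lam$ to $\mathrm{spec}(\tM)$ and invoking Perron--Frobenius on the irreducible nonnegative matrix $A_r$ (irreducibility coming from connectedness of $H$), so that boundary behaviour $\alpha(r)=1$ forces a Perron eigenvector producing a non-$\ell^2$ eigenfunction for $\tM-\lam I$. A secondary technical issue is the case $r_e=\infty$, which I would handle by reformulating $S$ in homogeneous coordinates on $\C\cup\{\infty\}$ or by a small perturbation argument in $\lam$.
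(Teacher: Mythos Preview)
Your overall strategy coincides with the paper's: extract edge ratios from the Green function via the half-tree restriction, derive the polynomial recursion (you have dropped the diagonal term $M_{y,y}$, but that is a slip), package the decay condition as the Perron eigenvalue $\alpha(r)$ of a nonnegative transfer matrix on $\Edir(H)$, and in the converse direction rebuild $G$ multiplicatively along paths. Your worry about promoting $\alpha(r)\le 1$ to strict inequality is also handled as you suggest: the paper chooses the starting edge $e_0$ in the support of the Perron eigenvector of $R$, so that the level-$n$ mass $S_n\ge c\,\alpha(r)^n$, and $\sum_n S_n<\infty$ forces $\alpha(r)<1$ directly.

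There is, however, a real gap in your sufficiency argument. A Schur test based on row and column sums of $|G|^2$ does \emph{not} bound $G$ on $\ell^2$: uniform $\ell^2$ row/column norms do not imply operator boundedness. Concretely, on a tree with branching $d-1$ and all $|r_e|=\rho$, one has $\alpha(r)=(d-1)\rho^2$, so $\alpha(r)<1$ allows $\rho$ up to $(d-1)^{-1/2}$, whereas the unweighted Schur sum $\sum_y|G(x,y)|\sim\sum_n (d-1)^n\rho^n$ already diverges once $\rho>(d-1)^{-1}$. The paper's boundedness proof is substantially more delicate: it expands $\sum_v|\theta(v)|^2$ as a double sum over $u,w$, uses the multiplicative splitting of $\tG$ at the branch point $x$ of the tripod spanned by $u,v,w$, roots the tree, and applies Cauchy--Schwarz with weights of the form $\rho'(z,x)\rho'(x,u)/\rho'(z,w)$ so that \clmref{columns-in-l2} (the bound $\sum_x \rho(u,x)^2|\tG(u,x)|^2<c$) can be invoked iteratively. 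No off-the-shelf Schur estimate seems to shortcut this.

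Your handling of $r_e=\infty$ is also too optimistic. A perturbation-in-$\lam$ argument is problematic because the spectrum can have positive Lebesgue measure in $\C$, so nearby $\lam$ need not lie outside it; and homogeneous coordinates alone do not tell you how to propagate the product $r_{e_1}r_{e_2}$ through a $0\cdot\infty$ configuration. The paper imposes two combinatorial side conditions (its (b) and (c)) on how zero and infinite ratios may neighbour one another, replaces $0\cdot\infty$ along a path by the explicit value $-m_{e_2}/m_{e_1^{-1}}$ dictated by uniqueness of $G$, and then reruns the entire boundedness argument with each splitting point $y$ shifted to its neighbour $y^*$ with $r_{yy^*}=\infty$. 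Finally, your claim that $r_{\tilde e}$ depends only on $\pi(\tilde e)$ is not automatic from the covering; the paper secures it by proving that the relative kernel $\relker{e}$ on each half-tree is (generically) one-dimensional.
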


See \thmref{main} below for a statement including the algebraic
representation. This result gives an algorithm for finding the spectrum of
of $\tH$, and shows that it is determined by some algebraic curves.

A significant and motivating case of \thmref{main_rough}
is the non-backtracking spectrum of a
graph. A non-backtracking walk on a graph, $H$, is a directed path in $H$
such that no edge is the inverse of the preceding edge. The
non-backtracking matrix of the graph allows counting such paths in the same
way that the adjacency matrix of a graph is used to count paths in the
graph. Non-backtracking walks appear in numerous applications and
estimating their number is a fundamental problem. The matrix $B$ is indexed
by directed edges of $H$, with $B_{e,f}=1$ iff $(e,f)$ form a
non-backtracking path of length 2. This may be seen as an asymmetric, local
operator on the edge graph of $H$, and in this way extended to an operator
on $\tH$. This operator is asymmetric, but we can still use \thmref{main}
to characterize $\tB$'s spectrum. Note that while $B$ and $\tB$ are real
operators, they are not self-adjoint and so their spectra may be complex.

\begin{thm}\label{T:Bspec_rough}
  Given $H,\lam$, there is an explicit set of polynomial equations in
  variables $\{r_e\}_{e\in E(H)}$ taking values in $\C\cup\{\infty\}$, such
  that $\tB-\lam I$ is invertible iff there is a solution with norm
  $\alpha(r)<1$, where $\alpha(r)$ is explicitly given in terms of $H$.
\end{thm}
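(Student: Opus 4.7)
The plan is to adapt the proof strategy of \thmref{main_rough} to the non-backtracking setting. Although $\tB$ is an operator on $\ell^2(\Edir(\tH))$ rather than on $\ell^2(\tH)$, the crucial feature that made the vertex-case proof work—that $\tH$ is a tree, so Green function recursions decouple across branches—carries over intact. Indeed, from a directed edge $e=(u,v)$ of $\tH$, the non-backtracking continuations $(v,w)$ with $w\ne u$ open up a ``subtree of directed edges'' rooted at $e$, and the sub-subtree rooted at each such $(v,w)$ is disjoint from the others. So the same hierarchical organization is available; only the combinatorics of which quantities get multiplied at each branching step changes.

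First I would set up the Green function $g = (\tB-\lam I)^{-1}\delta_{e_0}$ on $\ell^2(\Edir(\tH))$ (when it exists) and derive its local recursion. Away from $e_0$, the defining equation becomes $\sum_{f': f\to f'\text{ non-backtracking}} g(f') = \lam g(f)$, which is local in the tree-of-edges. I would then introduce decay ratios attached to each directed edge of $\tH$, analogous to those in \thmref{main_rough}, encoding how $g$ changes as one steps along a non-backtracking walk; by the Galois action of $\pi_1(H)$ on $\tH$ these ratios descend to the finite set $\{r_e\}_{e\in\Edir(H)}$, and the local recursion turns into a finite polynomial system in these variables. The matrix whose spectral radius is $\alpha(r)$ is the transfer matrix that tells how many descendants each directed edge has and by what factor $g$ shrinks as one passes to a non-backtracking descendant; $\ell^2$-summability of $g$ over the tree-of-edges then corresponds exactly to $\alpha(r)<1$.

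Second I would prove the two implications. If a solution $r$ with $\alpha(r)<1$ exists, I construct the corresponding candidate Green function and use the spectral radius condition to verify $\ell^2$ convergence, producing a bounded two-sided inverse of $\tB-\lam I$; doing this for every basis vector $\delta_{e_0}$ is routine because the construction is uniform in the root edge, using the periodicity of the ratios. Conversely, if $\tB-\lam I$ is invertible, the Green functions exist, and the ratios they yield must satisfy the polynomial system with $\alpha(r)<1$, since otherwise the Green function would fail to be square-summable in the tree-of-edges.

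The main obstacle will be the non-self-adjointness of $\tB$: one cannot invoke the spectral theorem to pass between ``no approximate eigenvector'' and ``bounded inverse,'' and the ratio system built from the ``right'' Green function need not automatically yield a ``left'' inverse. Handling this carefully will likely require running the ratio-system construction for both $\tB-\lam I$ and its transpose $\tB^{T}-\lam I$ (which is the non-backtracking operator of $\tH$ with directed edges reversed, hence structurally the same), and then verifying that the two one-sided inverses agree on a dense subspace and extend to a genuine two-sided bounded inverse. Once this is in place, the explicit description of $\alpha(r)$ in terms of $H$—which falls out of the local recursion at each vertex of $H$—completes the statement.
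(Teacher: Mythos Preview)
Your approach is genuinely different from the paper's, and the obstacle you flag is precisely the one the paper avoids rather than confronts.

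The paper never analyzes $\tB-\lam I$ directly on $\ell^2(\Edir(\tH))$. Instead it first proves \thmref{small_spec}: for any bounded-degree graph, $\lam\in\sigma(B)$ iff $\lam=\pm1$ or the \emph{vertex} operator $Q_\lam = Q-\lam A+\lam^2 I$ fails to be invertible. Since $\lam^{-1}Q_\lam$ is a symmetric, connected, local operator on $\ell^2(V(\tH))$, \thmref{main} applies to it verbatim, producing the ratio system on $E(H)$ and the matrix $R$ whose Perron eigenvalue is $\alpha(r)$. Thus \thmref{Bspec_rough} is obtained as an immediate corollary of \thmref{small_spec} plus \thmref{main}, with no separate ``tree-of-edges'' argument and no need to cope with non-self-adjointness at all.

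Your direct route is not obviously wrong, but two points deserve caution. First, the undirected non-backtracking line graph of a tree is \emph{not} a tree (a degree-$d$ vertex produces a $2(d-1)$-cycle among the incident directed edges), so the branch-decoupling you invoke holds only in the forward cone from a fixed root edge, not globally; the Green function $(\tB-\lam I)^{-1}\delta_{e_0}$ lives on all of $\Edir(\tH)$, and you would need to control it on edges not forward-reachable from $e_0$ as well. Second, the step ``ratio system with $\alpha(r)<1$ $\Rightarrow$ bounded two-sided inverse'' in the paper's symmetric proof uses $M^{*}=\overline{M}$ essentially (to kill the kernel, \clmref{kernel}); your proposed fix via $\tB^{T}$ is plausible because $U\tB U=\tB^{*}$, but the paper explicitly lists the non-symmetric case as open in Section~10, so this would be new work rather than an adaptation. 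The $Q_\lam$ reduction buys you all of this for free.
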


As an example, we consider the case where $H$ is $K_4$ with one edge
deleted. Already in that case, the spectrum of the non-backtracking path
operator on $\tB$ has a non-trivial structure, see
Figure~\ref{fig-tree-spec}. We are also interested in the relation between
the non-backtracking spectrum of $\tH$ and that of a typical large but
finite cover of $H$. Figure~\ref{fig-lift-spec} shows the spectrum of a
randomly chosen cover of this $H$. This and other numerical experiments
give strong indication that the spectra converge to the spectrum of the
universal cover (determined rigorously in this paper.)

\begin{figure}[ht]
  \begin{center}
    \includegraphics[width=.7\textwidth]{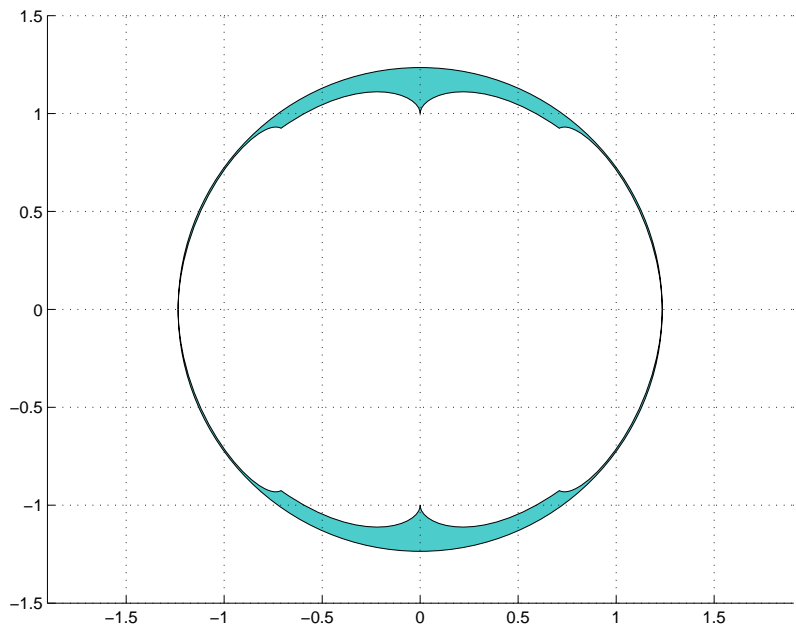}
    \caption{The non-backtracking spectrum of the universal cover of $K_4$
      minus an edge.}
    \label{fig-tree-spec}
  \end{center}
\end{figure}

\begin{figure}[ht]
  \begin{center}
    \includegraphics[width=\textwidth]{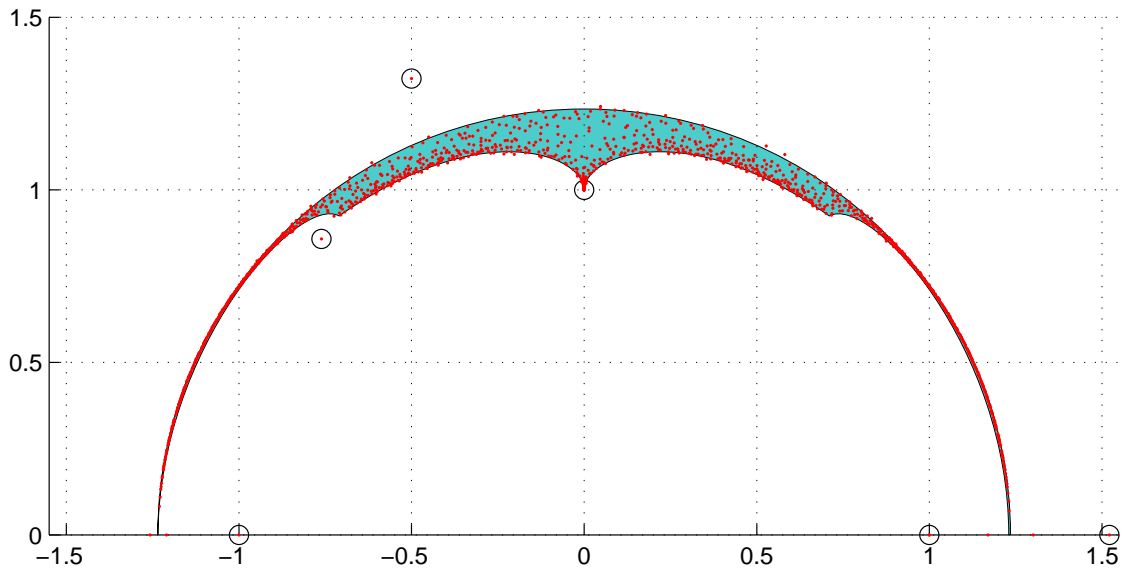}
    \caption{The non-backtracking spectrum of a random 300-lift of $K_4$
      minus an edge. The shaded area is the non-backtracking spectrum of
      its universal cover; the circled points are the spectrum for the base
      graph.}
    \label{fig-lift-spec}
  \end{center}
\end{figure}

\subsection{Motivation}
\label{sec:motivation}

Non-backtracking walks seem to be an essential part of various aspects of
graph theory, including the Broder-Shamir trace method
(\cite{BS87,Fri91,Fri93,Fri}), Ihara zeta functions of graphs
(\cite{Ha89,KS00,ST}), Bollobas' question concerning girth and average
degree (\cite{AHL02,Hoo05}), and decoding of low density linear codes
(\cite{Gal63,Sh04,RU}). Given $H$, there is a matrix, $B=B(H)$, whose
powers count non-backtracking walks of a given length. $B$ is indexed by
the directed edges of $H$, and is analogous to the adjacency matrix for
paths (see below). Many interesting quantities concerning non-backtracking
walks can be expressed in terms of spectral properties of $B$.

Several applications of walks in a graph $H$, require understanding the
spectrum of the universal cover of $H$, denoted $\tH$. Similarly, there are
connections between non-backtracking walks on $H$ and the spectrum of the
non-backtracking walk operator on $\tH$.

For regular graphs, i.e., graphs in which each vertex has the same degree,
there is a direct and simple relationship between the spectrum of $B$ and
that of the adjacency matrix, $A=A(H)$. This fact renders many questions
quite easy for regular graphs, masking formidable difficulties that may
occur for non-regular graphs. In the case of a $d$-regular graph $H$, the
universal cover is a $d$-regular tree, and the spectrum of $\tA=A(\tH)$ is
known to be the interval
\[
\sigma(\tA) = \left[ -2\sqrt{d-1},2\sqrt{d-1} \right].
\]
In this case, the spectrum of $\tB=B(\tH)$ is the union of a circle and two
real intervals in the complex plane. Numerical experiments, such as those
of Stark and Terras (see \cite{ST} and the previous papers in that series),
suggest that $\tB$, for general (non-regular) $H$, can be two dimensional.
As an example, when $H$ is $K_4$ minus an edge, the non-backtracking
spectrum of $\tH$ is depicted in Figure~\ref{fig-tree-spec}.

The main goal of this paper is to give a method for finding the spectrum of
$\tB$ for any finite graph, $H$. This will prove the above noted
two-dimensionality. A consequence of this method is that $\rho(\tB) =
\sqrt{\gr}$, i.e., the spectral radius of $\tB$ is the square root of the
growth rate of $\tH$. This further motivates the definition of the graph
theoretical Riemann hypothesis proposed by Stark and Terras~\cite{ST}.

\subsubsection*{Alon's conjecture.}

Alon's conjecture states that for any $d\ge3$ and any $\eps>0$, 
the probability that a random
$d$-regular graph on $n$ vertices has second largest
eigenvalue $\lambda_2 \le 2\sqrt{d-1} + \eps$ tends to $1$ as
$n\to\infty$. This was proven for various
of notions of ``random graph,'' i.e., various probability distributions on
$d$-regular graphs on $n$ vertices, in \cite{Fri} (see also
\cite{BS87,Fri91}). A main tool used to this end is the analysis of
non-backtracking walks in random, regular graphs.

As we shall see (and see \cite{Fri}), for finite $d$-regular graphs each
$\mu\in\sigma(A)$ corresponds to two $\lambda\in\sigma(B)$ given by the
solutions of $\lam^2 - \mu\lam + d-1=0$. In particular, since $\sigma(A(H))
\subset [-d,d]$, the non-backtracking spectrum is contained in the union of
the circle $\{z : |z|=\sqrt{d-1} \}$ and the two real segments $[1-d,-1]$
and $[1,d-1]$. This simple relation yields a form of Alon's conjecture in
terms of the non-backtracking spectrum, namely that the spectral radius,
$\rho(B)$, of $B$ is at most $\sqrt{d-1}+\eps$.

In \cite{Fri03}, Alon's conjecture is relativized, meaning that for any
graph, $H$ (the ``base graph''), we consider a random lift, $\pi\from G\to
H$, of degree $n$; eigenpairs of $A(H)$ pull back to eigenpairs of $A(G)$,
we call such eigenpairs of $A(G)$ {\em old} eigenpairs of $\pi$, while
eigenpairs of $A(G)$ not arising from $A(H)$ we call {\em new} eigenpairs
of $\pi$; we conjecture that for any fixed $H$ and $\eps>0$, most random
lifts have all new eigenvalues of absolute value bounded above by $\eps$
plus the spectral radius of $A(\tH)$; and a weaker new eigenvalue bound is
proved via the Broder-Shamir technique
(Alon's original conjecture is the case where $H$
has one vertex). The full conjecture might be provable with a refined
Broder-Shamir method (as in \cite{Fri91,Fri}) that seems to require a
translation between the adjacency matrix spectrum and non-backtracking
spectrum. However, for irregular graphs we are aware of no such
translation. This makes it difficult to apply the trace method in the
general case.

Computer experiments indicate that the new (i.e., not arising from the
base) non-backtracking spectrum of large random lifts is distributed in
some fixed region, as seen in Figure~\ref{fig-lift-spec}. We would like to
prove that this region is the non-backtracking spectrum of the universal
cover of the base graph, shown in Figure~\ref{fig-tree-spec}. This suggests
a generalization of the relative Alon conjecture, roughly stating that the
new non-backtracking spectrum of a large random lift is near the
non-backtracking spectrum of the universal cover; for non-regular graphs,
unlike the case of regular graphs, it is not clear how this relates to the
usual relative Alon conjecture regarding adjacency matrix spectrum. This
might allow us to apply the trace methods of \cite{BS87,Fri93,Fri} in order
to prove some analog of Alon's conjecture for lifts of general base graphs
(perhaps first for the non-backtracking spectrum and then, ultimately, on
the adjacency matrix spectrum).

The relativized Alon conjecture raises the questions of how to determine
the non-backtracking spectrum of the universal cover of a finite graph
(answered in this paper) and how can one translate spectral bounds between
adjacency matrices and non-backtracking adjacency matrices in the irregular
case.

\subsubsection*{The Ihara Zeta function.}

For a finite graph $H$, the eigenvalues of $B(H)$ are the reciprocals of
the poles of the {\em Ihara zeta function} of the graph. This zeta function
is a graph theoretic analogue of the number theoretic Riemann zeta
function. For further details, we refer the reader to
Hashimoto~\cite{Ha89}, Kotani and Sunada~\cite{KS00}, and to Stark and
Terras~\cite{ST}.

\subsubsection*{Bounds for graphs with large girth.}

A third motivation is the result of Alon, Hoory, and Linial~\cite{AHL02},
giving a lower bound on the number of vertices in a graph with a specified
girth and average degree. They answer affirmatively a longstanding open
question by Bollob{\'a}s. Their result is basically a lower bound on the
Perron eigenvalue of $B$ in terms of the average degree. In further work,
Hoory~\cite{Hoo05}, gave a similar bound on the spectral radius of $H$'s
universal cover, $\rho(A(\tH))$.

Further questions raised by these works concern the relation between
$\tH$'s growth rate $\gr=\rho(B)$, and the spectral radii of the remaining
three operators $A$, $\tA$, and $\tB$. In this paper we partially answer
this question by proving that $\rho(\tB)=\sqrt{\gr}$.

\subsubsection*{LDPC error correcting codes.}

A final motivation is the analysis of a decoding algorithm for a family of
error correcting codes, known as low density parity check (LDPC) codes.
This decoding algorithm, which is receiving a lot of attention at present,
was first suggested by Gallager~\cite{Gal63}. LDPC codes are binary linear
codes of length $n$, described by a set of highly sparse linear equations
over $GF(2)$. It is convenient to represent such a code by a bipartite
graph, where the two sides correspond to variables and equations. Each
equation is adjacent to the variables occurring in it. There is a natural
decoding algorithm for such codes based on message passing. At each
iteration of the algorithm, a message is sent along each directed edge of
the graph (alternating between variables to equations and equations to
variables rounds). We refrain from further description of this algorithm,
and refer the reader to a survey by Shokrollahi~\cite{Sh04}, or a book by
Richardson and Urbanke~\cite{RU}.

The connection of this algorithm to non-backtracking walks is in the fact
that the $k$-th generation message computed at edge $e$ depends only on the
initial value at the endpoints of all length $k$ non-backtracking walks
starting at $e$. It seems that to analyze such an algorithm, one needs a
non-backtracking analog of expansion. For example, the Mixing Lemma for
expanders states that the number of edges $e(S,T)$ between any two vertex
sets $S$, $T$ in a $d$-regular graph satisfies $\Big|e(S,T)-d|S||T|/N\Big|
\le \lam_2 \sqrt{|S||T|}$, where $\lam_2$ is the second largest eigenvalue
in absolute value (see \cite{HLW} for a survey on expander graphs). It
would be helpful to develop a non-backtracking analogue of the expander
Mixing Lemma.

\subsection{Main results}

In the following unless otherwise specified, all graphs will be {\em
  simple}, meaning without self-loops or multiple edges, and connected; in
Section~9 we explain our conventions and generalizations to graphs that are
not simple. Throughout this paper, if $G$ is a graph, then $V(G)$ denotes
$G$'s vertices, and $E(G)$ denotes $G$'s {\em directed edges}, which (for
$G$ simple) we define as the union of $\{(u,v),(v,u)\}$ over all $u,v$
joined by an edge of $G$; for $e=(u,v)\in E(G)$ we set $e^{-1}=(v,u)$; we
write $u\sim v$ to denote that there is an edge joining $u$ and $v$. (All
graphs in this paper are locally finite, meaning that each vertex is
incident upon a finite number of edges.)

Let $\pi\from T\to H$ be the universal cover of a finite graph, $H$. For an
operator, $M$, on $l^2(V(T))$, we set $M_{uv} = \langle \delta_u, M
\delta_v \rangle$ for $u,v\in V(T)$ (where $\delta$ denotes the Dirac delta
function); we say that $M$ is
\begin{enumerate}
\item {\em symmetric} if $\overline{M}^*=M$;
\item {\em local} if $M_{uv}=0$ for $u,v$ of distance greater than one;
\item{\em connected} if $M_{uv}\ne 0$ for $u,v$ of distance one;
\item {\em a pullback (from $\pi$ or, abusively, $H$)} if $M_{uv}$ depends
  only on $\pi(u)$ and $\pi(v)$ assuming $u\sim v$.
\end{enumerate}
The main result of this paper gives a way to compute the spectrum of local,
symmetric operators on $T$ pulled back from $H$. If $M$ has an inverse, we
define the {\em Green function for $M$} to be $G(u,v)=\langle \delta_u,
M^{-1} \delta_v \rangle$. If an operator $A$ on $l^2(V(H))$ pulls back to
$\pi^*A$ on $T$, then determining the spectrum of $\pi^*A$ amounts to
determining for which $\lambda$ the operator $M=M_\lambda=\pi^*A-I\lambda$
has a Green function.

The key idea in this paper to determine the Green function, $G$, of $M$, is
that for any edge, $e=(u,v)$, of $E(T)$, the ratio $r_e=G(u,v)/G(u,u)$
should depend only on $\pi(e)$. This surprised us at first, although the
intuitive explanation is simple. If $\pi(e)=\pi(e')$ for $e'=(u',v')\in
E(T)$, then there is an automorphism of $T$ taking $e$ to $e'$; using this
automorphism we can ``graft'' appropriate linear combinations of pieces of
$G(u,\,\cdot\,)$ and $G(u',\,\cdot\,)$ onto $G(u',\,\cdot\,)$; if $r_e\ne
r_{e'}$, then this ``grafting'' process will yield more than one Green
function, which is impossible (see Lemma~\ref{L:one_dim}).

There are numerous technical difficulties in turning this intuition into
precise theorems. One is that an example of Fig\`a-Talamanca and Steger
(see Section~\ref{se:examples}) shows that $G(u,u)$ may vanish, forcing us
to work with the more delicate situation of zero and infinite ratios.
Another is that knowing $M$ and a ratio system, $r$, while it is easy to
determine $G$, it does not seem easy to know if $M$ is invertible, at least
when $H$ is infinite.

To make a precise statement, if $a,b\in\C$ with at least one of $a,b$ not
vanishing, we interpret $b/a$ to be $\infty$ if $a=0$ and the usual ratio
otherwise; in this case we say the {\em generalized ratio}, $b/a$, exits.
If $u,v,w\in V(T)$ are vertices of a tree, $T$, we write $[u,w]$ for the
unique path (a sequence alternating between vertices and edges) from $u$ to
$w$ in $T$, and say that $v$ is between $u$ and $w$ if $v$ lies in $[u,w]$.

\pagebreak

\begin{defn}\label{D:generalized-ratio-system}
  A {\em {\generalizedRS} for $M$} is a function $r\colon E(H) \to
  \C\cup\{\infty\}$ satisfying the following:
  \begin{enumerate} \renewcommand{\labelenumi}{(\alph{enumi})}
  \item If $r_e \ne 0$ for some $e=(u,v)$ then
    \begin{equation} \label{eq:recursion}
      0 = m_{vv} + \frac{m_{e^{-1}}}{r_e}
      + \sum_{e'\text{ s.t. } e\to e'} m_{e'} r_{e'},
    \end{equation}
    with the convention that $c/\infty=0$ for any $c$, and where $e\to e'$
    means that $e'\neq e^{-1}$ but $(e,e')$ forms a path of length two
    (i.e., $e$'s head is the tail of $e'$).
  \item If $r_e=0$ then there is some $f$ with $e\to f$ and $r_f=\infty$.
  \item If $r_f=\infty$ and $e\to f$, then $r_{e}=0$ and $r_{f^{-1}}\ne0$.
  \end{enumerate}
\end{defn}
Here condition~(a) comes from the fact that for fixed $u$ we have
$MG(u,\,\cdot\,)=\delta_u$; conditions~(b) and (c) are needed when the
ratios become zero or infinite, i.e., when $G(u,u)=0$ for some $u$. Note
that there is no assumed relationship between $r_e$ and $r_{e^{-1}}$.

For a {\generalizedRS} we define $r$'s {\em decay rate}, $\alpha(r)$, to be
the Perron eigenvalue of the matrix $R$ whose rows and columns are indexed
by the directed edges of $H$ with nonzero (but possibly infinite) ratios,
defined by:
\begin{equation}\label{eq:defineR}
  R_{e,f} = \begin{cases}
    \displaystyle
    \sum_{e' \text{ s.t. } e \to e' \to f}
                |m_f/m_{e'^{-1}}|^2 & r_{f}=\infty, \\
    |r_f|^2 & \text{$r_f \ne \infty$ and $e\to f$,} \\
    0 & \text{otherwise.}
  \end{cases}
\end{equation}

\begin{thm}\label{T:main}
  Let $\pi\from T\to H$ be a universal cover of a finite graph, $H$, and
  let $M$ be a pulledback, symmetric, local operator on $l^2(V(T))$. Then
  $M$ has a bounded inverse if and only if $M$ has a {\em ratio system} $r
  : E(H) \to \C \cup \{\infty\}$ with $\alpha(r)<1$.

  Furthermore, for and edge $(u,v)$ of type $e$ and any $w$ such that $u\in
  [w,v]$, we have $G(w,v)/G(w,u)=r_e$ whenever this generalized ratio is
  defined.
\end{thm}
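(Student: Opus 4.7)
The argument has two directions: from invertibility to a ratio system, and back.

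For the forward direction, assume $M$ is invertible with Green function $G$. For each edge $(u,v) \in E(T)$ of type $e = \pi(u,v)$, I will define $r_e$ as the generalized ratio $G(w,v)/G(w,u)$ for any $w$ with $u \in [w,v]$. The first task is showing that this ratio depends only on $e$, not on the choice of lift $(u,v)$ or of $w$. This is precisely the ``grafting'' argument sketched in the introduction: if $\pi(u,v) = \pi(u',v')$, a deck automorphism $\phi$ of $T$ sends one edge to the other, and by forming appropriate linear combinations of $G(w,\cdot)$ and $G(\phi(w),\cdot)$ one produces additional Green functions whenever the ratios disagree, contradicting Lemma~\ref{L:one_dim}. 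The ratio-system conditions~(a), (b), (c) then fall out of the identity $MG(w,\cdot) = \delta_w$ evaluated at each vertex $v \ne w$, with (b) and (c) capturing the cases where $G(w,u)$ vanishes.

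The next step is to show $\alpha(r) < 1$. Fix $w$ and observe that for $v \ne w$, $G(w,v)$ is $G(w,w)$ times a formal product of ratios along the non-backtracking path from $w$ to $v$ (interpreted using conditions~(b), (c) when zero or infinite ratios appear). Regrouping
\[
  \sum_{v \in V(T)} |G(w,v)|^2 < \infty
\]
according to the type of the terminal edge at $v$ expresses this sum in terms of the iterates of the matrix $R$ from \eqref{eq:defineR}; the finiteness, combined with Perron--Frobenius, forces $\alpha(r) = \rho(R) < 1$.

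For the reverse direction, I start from a ratio system $r$ with $\alpha(r) < 1$ and build a candidate inverse. For each $w$, define $G(w,\cdot)$ recursively by choosing $G(w,w)$ so that $(MG(w,\cdot))_w = 1$ and propagating $G(w,v) = G(w,u)\, r_{\pi(u,v)}$ along outward edges from $w$. A short computation using condition~(a) verifies both that the normalization $G(w,w)$ is consistent and that $(MG(w,\cdot))_v = 0$ for all $v \ne w$. The key analytic step is to bound $\|G(w,\cdot)\|_{\ell^2}$ uniformly in $w$: the same regrouping by terminal-edge type as above expresses $\sum_v |G(w,v)|^2$ as a geometric series controlled by the spectral radius of $R$, which converges since $\alpha(r) < 1$. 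This yields a bounded operator $G$ with $MG = I$ (and by symmetry $GM = I$), so $M$ is invertible. The furthermore statement about the ratios then follows by construction.

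\textbf{The principal obstacle} I expect is the careful handling of the degenerate case where $G(w,w)$ vanishes and certain ratios become $0$ or $\infty$ (the Fig\`a-Talamanca--Steger phenomenon). Verifying that the ratio-system axioms~(b), (c) are exactly the right bookkeeping, both for extracting a well-defined $r$ from $G$ and for reconstructing $G$ from $r$, is where most of the technical work concentrates, and it is also the reason the matrix $R$ has the two-case definition in \eqref{eq:defineR}. A secondary issue is the precise equivalence between $\ell^2$-summability and $\alpha(r)<1$: the implication $\alpha(r)<1 \Rightarrow \ell^2$ is essentially geometric decay, but the converse needs the Perron--Frobenius structure of $R$ to conclude that mere summability forces $\rho(R)<1$.
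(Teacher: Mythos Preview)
Your forward direction is essentially the paper's argument, and your identification of the zero/infinite-ratio bookkeeping as the main technical nuisance is accurate.

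The genuine gap is in the reverse direction, at the step ``This yields a bounded operator $G$ with $MG=I$.'' A uniform bound on $\|G(w,\cdot)\|_{\ell^2}$ does \emph{not} imply that the integral operator $h\mapsto \sum_u \tG(u,\cdot)h(u)$ is bounded on $\ell^2$: an infinite matrix with uniformly $\ell^2$-bounded rows and columns need not act boundedly. Schur's test would require uniform $\ell^1$ row/column bounds, but here $\sum_v |\tG(u,v)|$ is dominated only by $\sum_n (\alpha(r)\cdot\gr)^{n/2}$ via Cauchy--Schwarz over level sets, and there is no reason for $\alpha(r)\cdot\gr<1$.

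The paper's substitute (Claims~\ref{C:columns-in-l2} and \ref{C:conv}) is substantially more work than your sketch suggests. One expands $\|\theta\|_2^2 \le \sum_{u,w}|h(u)h(w)|\,H(u,w)$ with $H(u,w)=\sum_v|\tG(u,v)\tG(w,v)|$, then exploits the tree structure: the paths $[u,v]$ and $[w,v]$ merge at a vertex $x\in[u,w]$, and the near-multiplicativity \eqref{eq:split} reduces $H(u,w)$ to a sum over $x\in[u,w]$ of $|\tG(u,x)\tG(w,x)|$. Bounding the resulting triple sum requires a weighted Cauchy--Schwarz with weights $\rho'(z,x)\rho'(x,u)/\rho'(z,w)$ (where $z$ is the rooted meeting point of $u,w$), together with the strengthened estimate $\sum_v \rho(u,v)^2|\tG(u,v)|^2<\infty$ of Claim~\ref{C:columns-in-l2}. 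None of this is captured by your ``regrouping by terminal-edge type.''

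A secondary point: ``by symmetry $GM=I$'' hides something. The candidate $\tG$ is not a~priori symmetric (the paper explicitly declines to prove this directly; see the footnote before Claim~\ref{C:columns-in-l2}). Instead one shows surjectivity via Claim~\ref{C:conv} and then proves $\ker M=0$ separately (Claim~\ref{C:kernel}) by observing that $\overline M$ also admits a ratio system with $\alpha<1$, hence is surjective, so $\langle f,h\rangle=\langle f,\overline M g\rangle=\langle Mf,g\rangle=0$ for all $h$.
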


Theorem~\ref{T:main} has numerous applications, including to adjacency
matrices and Laplacians. The application to non-backtracking spectrum of a
graph, $G$, derives from its connection to the local operator
\[
Q_\lambda = Q-\lambda A+\lambda^2 I
\]
where $A=A_G$ is the adjacency matrix of $G$ and $Q=Q_G$ is the diagonal
matrix with entries $Q_{vv}=d_v-1$ where $d_v$ is the degree of $v$.
Specifically we shall prove the following theorem for graphs, $G$, of
bounded degree, regarding the spectrum, $\sigma(B)$, of $B=B(G)$.

\begin{thm}\label{T:small_spec}
  Let $G$ be a graph with degrees bounded by $\dmax$, then
  \[
  \sigma(B(G)) = \{\pm 1\} \cup \{ \lam : Q_\lam \text{ is not invertible}
  \}.
  \]
\end{thm}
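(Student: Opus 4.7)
The plan is to establish an operator-level Ihara--Bass identity relating $\lambda I - B$ on $\ell^2(E(G))$ to the vertex-space operator $Q_\lambda$ on $\ell^2(V(G))$. For a directed edge $e = (u,v)$ with tail $u$ and head $v$, define incidence operators $\partial_0,\partial_1\from\ell^2(V(G))\to\ell^2(E(G))$ by $(\partial_0 f)(e) = f(u)$ and $(\partial_1 f)(e) = f(v)$, and let $J$ denote the edge-reversal involution $(Jg)(e) = g(e^{-1})$. A direct count on matrix entries gives $B = \partial_1\partial_0^* - J$, $\partial_1\partial_1^* = I + BJ$, $\partial_0\partial_0^* = I + JB$, $\partial_0^*\partial_0 = \partial_1^*\partial_1 = D$ (the degree operator), and $\partial_1^*\partial_0 = \partial_0^*\partial_1 = A$. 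Substituting these identities into $(\lambda I - B)(\lambda\partial_1 - \partial_0)$ and $(\lambda\partial_0^* - \partial_1^*)(\lambda I - B)$ yields the two intertwiners
\begin{equation*}
(\lambda I - B)(\lambda\partial_1 - \partial_0) = \partial_1 Q_\lambda, \qquad (\lambda\partial_0^* - \partial_1^*)(\lambda I - B) = Q_\lambda\partial_0^*.
\end{equation*}

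Next, introduce $\Phi = (\lambda\partial_1 - \partial_0)Q_\lambda^{-1}(\lambda\partial_0^* - \partial_1^*)$ and $C = \lambda I - J$ (for $\lambda$ where $Q_\lambda$ is invertible). Composing the intertwiners and simplifying using the $\partial_i\partial_j^*$ identities above, the $BJ$ and $JB$ contributions telescope, producing the Bass operator identity
\begin{equation*}
(\lambda I - B)(\Phi + C) = (\Phi + C)(\lambda I - B) = (\lambda^2 - 1)\,I.
\end{equation*}
Thus, whenever $\lambda^2\ne 1$ and $Q_\lambda$ has a bounded inverse, $\lambda I - B$ is invertible with bounded inverse $(\lambda^2-1)^{-1}(\Phi + C)$; boundedness of $\Phi$ is automatic since $\partial_0,\partial_1,J$ are bounded by the $\dmax$-hypothesis on degrees. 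This gives $\sigma(B)\subset\{\pm 1\}\cup\{\lambda : Q_\lambda\text{ not invertible}\}$.

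For the reverse inclusion with $\lambda^2\ne 1$, suppose $Q_\lambda$ is not invertible. Since $Q_\lambda$ is bounded, its non-invertibility means $Q_\lambda$ or $Q_\lambda^* = Q_{\bar\lambda}$ fails to be bounded below; because $B$ has real entries $\sigma(B)$ is conjugation-invariant, so it suffices to treat $Q_\lambda$. Pick $f_n\in\ell^2(V)$ with $\|f_n\| = 1$ and $Q_\lambda f_n\to 0$, and set $g_n = (\lambda\partial_1 - \partial_0)f_n$. The first intertwiner gives $(\lambda I - B)g_n = \partial_1 Q_\lambda f_n\to 0$. If $\liminf\|g_n\| > 0$, then along a subsequence $g_n/\|g_n\|$ is an approximate eigenvector of $B$ at $\lambda$, placing $\lambda$ in $\sigma(B)$. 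Otherwise $\|g_n\|\to 0$ along some subsequence; applying the bounded operators $\partial_1^*$ and $\partial_0^*$ yields $(\lambda D - A)f_n\to 0$ and $(\lambda A - D)f_n\to 0$, whence $(\lambda^2-1)Af_n\to 0$ and then $Df_n\to 0$. Since $D\ge I$ (no isolated vertices), this forces $f_n\to 0$, contradicting $\|f_n\|=1$; hence only the first case occurs, and $\lambda\in\sigma(B)$.

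The inclusion $\{\pm 1\}\subset\sigma(B)$ is then established separately: for $\lambda = 1$ one constructs approximate eigenvectors supported on long non-backtracking paths in $G$ (with appropriate weighting to absorb the branching when $\dmax\ge 3$), and a sign-alternating variant handles $\lambda = -1$. The hard part of the whole argument is the boundedness-below step in the converse direction above: one must rule out the pathological case $\|g_n\|\to 0$ while $\|f_n\|=1$, which relies on the positivity of $(\lambda\partial_1 - \partial_0)^*(\lambda\partial_1 - \partial_0) = (1+|\lambda|^2)D - 2\Re(\lambda)A$ for $\lambda^2\ne 1$ together with $D$ being bounded below; a secondary subtlety is that the $\{\pm 1\}$ inclusion implicitly requires enough structure on $G$ (e.g., the existence of arbitrarily long non-backtracking walks) to produce the approximate eigenvectors.
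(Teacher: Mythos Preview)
Your argument is correct, and for the implication ``$Q_\lambda$ not invertible $\Rightarrow \lambda\in\sigma(B)$'' it is essentially the paper's proof in different notation: your $g_n=(\lambda\partial_1-\partial_0)f_n$ is (up to sign) the paper's $g_{uv}=f_u-\lambda f_v$, and your positivity bound $(\lambda\partial_1-\partial_0)^*(\lambda\partial_1-\partial_0)\ge cD$ for $\lambda\ne\pm1$ is equivalent to the paper's use of the invertibility of $\left(\begin{smallmatrix}1&-\lambda\\-\lambda&1\end{smallmatrix}\right)$.

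The genuine difference is in the other implication. The paper proves it contrapositively: starting from an approximate eigenfunction $g$ of $B$ at $\lambda$, it sets $f=S_og=\partial_0^*g$ and, via the identities $S_iB=QS_o$ and $S_oB=AS_o-S_i$, shows $Q_\lambda f$ is small while $\|f\|$ is bounded below using $S_i^*S_o=(B-\lambda I)+(\lambda I+U)$. You instead exhibit an explicit two-sided inverse $(\lambda^2-1)^{-1}(\Phi+C)$ for $\lambda I-B$ via the operator Ihara--Bass identity. Your route is cleaner and more algebraic (and immediately gives the resolvent formula), while the paper's route is more elementary in that it never needs $Q_\lambda^{-1}$ to exist as an operator and keeps everything at the level of approximate eigenfunctions; the paper's symmetric treatment also makes the analogy between the two directions transparent.

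One caveat on your $\{\pm 1\}$ sketch: ``appropriate weighting to absorb the branching'' is misleading. Putting weights on the forward edges $e_1,\dots,e_k$ of a path alone cannot kill the branching terms, since each off-path edge $e$ with $e\to e_j$ contributes $|c_j|^2$ to $\|(B-I)f\|^2$, of the same order as $\|f\|^2$. The paper's device is different: one takes $f=\sum_{i=1}^k\delta_{e_i}-\sum_{i=0}^{k-1}\delta_{e_i^{-1}}$, i.e., constant weights but with the reverse edges included with opposite sign; then $(B-I)f$ is supported on just four edges regardless of the branching. Your acknowledgment that this step needs ``enough structure on $G$'' (arbitrarily long non-backtracking walks) is well placed.
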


For $\lambda=0$, $Q_\lambda$ is symmetric, and otherwise
\[
\lambda^{-1}Q_\lambda=\lambda^{-1}Q - A + \lambda I
\] 
is (real) symmetric, and so Theorem~\ref{T:main} applies to $Q_\lambda$.

The following provides an interesting contrast to Theorem~\ref{T:main}.

\begin{thm}\label{T:alpha1general}
  Let $\pi\from T\to H$ be as in Theorem~\ref{T:main}. If $r$ is a
  {\generalizedRS} with $\alpha(r)=1$, then $M$ is not invertible.
\end{thm}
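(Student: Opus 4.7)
The plan is to use the ratio system $r$ with $\alpha(r)=1$ to build a Weyl sequence witnessing $0\in\sigma(M)$: functions $f_n\in\ell^2(V(T))$ with $\|f_n\|\to\infty$ while $\|Mf_n\|$ stays bounded. This will force $M$ to be non-invertible. The construction imitates what $G(u_0,\cdot)$ would be if the Green function existed, but truncated to balls around a chosen base vertex.

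Fix a root $u_0\in V(T)$ and define $f\colon V(T)\to\C$ by propagating ratios outward: set $f(u_0)=1$, and for each child $v$ of a vertex $w$ in the tree rooted at $u_0$ with connecting edge of type $e=\pi(w,v)$, set $f(v)=f(w)\cdot r_e$. The conventions of \defref{generalized-ratio-system} dictate how to read this when $r_e\in\{0,\infty\}$; conditions (b) and (c) are exactly what is needed to extend $f$ consistently across degenerate edges (when $r_e=\infty$ the value at $v$ and its descendants is forced by the recursion rather than the product). The algebraic identity \eqref{eq:recursion}, read in reverse, then yields $(Mf)(v)=0$ for every $v\ne u_0$.

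Since $f$ typically fails to lie in $\ell^2(V(T))$ when $\alpha(r)=1$, truncate: let $f_n=f\cdot\chi_{B_n}$ where $B_n$ is the ball of radius $n$ about $u_0$ in $T$. Then $Mf_n$ vanishes on $B_{n-1}\setminus\{u_0\}$ and outside $B_{n+1}$, so its only contributions are a fixed value at $u_0$ and boundary terms on the annulus of radii $n,n+1$, controlled pointwise by $|f(v)|$ times a bound on the entries of $M$. Writing $S_k=\sum_{\mathrm{dist}(v,u_0)=k}|f(v)|^2$, one obtains
\[
\|Mf_n\|^2\le C_1+C_2(S_n+S_{n+1}),\qquad \|f_n\|^2=\sum_{k=0}^{n}S_k.
\]
The definition \eqref{eq:defineR} of $R$ is designed precisely so that $S_k=\langle w,R^k v_0\rangle$ for nonnegative vectors $w,v_0$ determined by the edges at $u_0$: the entry $|r_f|^2$ records the $\ell^2$-cost of traversing an edge of type $f$, and the $r_f=\infty$ rule telescopes two shells into one.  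Since $R$ is nonnegative with Perron eigenvalue $\alpha(r)=1$, the sequence $(S_k)$ stays bounded above but does not decay to zero along an infinite subsequence, provided $v_0$ has nontrivial overlap with the Perron eigenspace. Consequently $\|f_n\|\to\infty$ while $\|Mf_n\|$ is bounded, so $\|Mf_n\|/\|f_n\|\to 0$.

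The main obstacle is twofold. First, the bookkeeping around zero and infinite ratios: one must check that $f$ is well-defined and satisfies $Mf\equiv0$ off $u_0$ precisely because of conditions (b)–(c) and the $c/\infty=0$ convention; this is the point of the somewhat intricate phrasing of \defref{generalized-ratio-system}. Second, guaranteeing that the starting vector $v_0$ is not orthogonal to the Perron eigenspace of $R$: if $R$ is reducible, one restricts to an irreducible component attaining eigenvalue $1$ and chooses $u_0$ to be a lift of a vertex from which that component is reached, or else averages the construction over a few starting vertices. Finiteness of $H$ and universality of the cover make such a choice always possible.
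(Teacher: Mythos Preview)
Your approach is the same as the paper's: build approximate eigenfunctions by truncating the function defined via the ratio system, and use that the level-$k$ mass $S_k$ is a row sum of $R^k$. The paper carries this out on a subtree $T_e$ for an edge $e$ in the support of the Perron eigenvector of $R$, rather than on balls about a vertex; this sidesteps the question of what happens when an edge leaving $u_0$ has $r_e=\infty$, which you wave past.

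There is one real error in your reasoning. You assert that since $\alpha(r)=1$, the sequence $(S_k)$ stays bounded above, and conclude that $\|Mf_n\|$ is bounded. This is false in general: a nonnegative matrix $R$ with spectral radius $1$ can have entries of $R^k$ growing polynomially in $k$ (take a reducible $R$ with more than one diagonal block of spectral radius $1$, e.g.\ $\left(\begin{smallmatrix}1&1\\0&1\end{smallmatrix}\right)$). The paper deals with this correctly: it notes that $S_k = P(k)+o(1)$ for some polynomial $P$ (nonzero by the choice of starting edge), and then compares $\|MF_n\|^2 = O(S_n) = O(n^{\deg P})$ against $\|F_n\|^2 = \sum_{k\le n} S_k \asymp n^{\deg P+1}$. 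Your final conclusion $\|Mf_n\|/\|f_n\|\to 0$ survives, but the step ``$\|Mf_n\|$ is bounded'' does not; you need the degree comparison instead.
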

Calculations with $K_4$ minus an edge, as in Figure~\ref{fig-tree-spec},
show that a given $M$ can have more than one ratio system.
Theorem~\ref{T:alpha1general} says that if there is a ratio system with
$\alpha=1$, then there are none with $\alpha<1$.

The rest of this paper is divided as follows. In Section~2 we give some
preliminary remarks. In Section~3 we discuss the non-backtracking spectra
of trees and prove Theorem~\ref{T:small_spec}. In Section~4 we bound the
non-backtracking spectrum in an annulus in terms of the growth rate of the
tree. In Section~5 we discuss the Green function and relative kernels (that
make precise the ``grafting'' idea discussed earlier showing that Green's
function has ``periodic ratios''). In Section~6 we prove
Theorem~\ref{T:main} in the case of a ratio system with no zero or
infinite ratios. In Section~7 we prove Theorem~\ref{T:main} in full
generality. In Section~8 we give some interesting examples illustrating
various aspects of Theorem~\ref{T:main}. In Section~9 we show how to
extend our theory to much more general base graphs (or pregraphs), $H$. In
Section~10 we make some remarks, mentioning a few of the many questions
left open by this paper.

\subsection*{Acknowledgments}

We wish to thank Chen Greif, Audry Terras, Alex Gamburd, and B\'alint
Vir\'ag for illuminating discussions.

\section{Preliminaries}
\label{sec:prelim}

A directed edge $e'=(u',v')$ of a graph, $H$, {\em can follow} edge
$e=(u,v)$, denoted $e\to e'$, if $u'=v$ and $e'\ne e^{-1}$. A
non-backtracking walk on $H$ is a sequence of directed edges
$e_1,e_2,\dots,e_r$ such that $e_k\to e_{k+1}$ for all $k$.

If $H$ is a graph and $v\in V(H)$, the {\em 1-neighbourhood of $v$} is the
subgraph consisting of all undirected edges incident upon $v$. A graph
homomorphism $\nu\colon H'\to H$ is a {\em covering map} (see \cite{Fri93})
if for each $v'\in V(H')$, $\nu$ gives a bijection of the edges of the
1-neighbourhood of $v'$ with those of $v$ (note that this definition is
correct even when the graphs have self-loops; see Section~\ref{se:self}).
$H'$ is a {\em cover} of $H$. The {\em type} of a vertex $v\in H'$, is
defined as the vertex $\nu(v)\in H$. If $\nu(v')=v$ for vertices $v',v$, we
say that $v'$ is {\em above} $v$ (or $v$ is {\em below} $v'$). These terms
are similarly defined for edges.

Note that a cover of a cover is a cover. The {\em universal cover} of $H$,
denoted $\tH$, is the (unique up to isomorphism) cover of $H$ that is also
a cover of every other cover of $H$. The universal cover may be constructed
as follows. Fix some (arbitrary) vertex $v_0 \in H$. The vertices of $\tH$
are the finite non-backtracking walks on $H$ starting at $v_0$. Two
vertices of $\tH$ are adjacent if one extends the other by a single step.
The covering map $\pi\from V(\tH)\to V(H)$ maps a walk to its terminal
vertex in $H$. It is easy to see that $\tH$ is a tree, hence it is also
referred to as the covering tree of $H$. The universality of $\tH$ can be
stated as follows: If $\nu\colon H'\to H$ is any covering map, and $v'\in
V(H')$ is of type $v_0$, then there is a unique covering map
$\mu\colon\tH\to H'$ with $\pi=\nu\mu$ and $\mu(\phi)=v'$, where $\phi$ is
the empty path.)

If the covering map $\nu\colon H'\to H$ is $n$-to-1 for some finite $n$,
then we also say that $H'$ is a {\em lifting}, or an {\em $n$-lifting}, of
$H$. Every cover is either a lifting or is $\infty$-to-1. Equivalently, the
graph $H'$, which we denote $H^n$, can be characterized as follows: the
vertex set of $H^n$ consists of $n$ copies of $H$'s vertices: $V(H^n) =
V(H) \times [n]$, with the covering map $\pi((v,i))=v$. The edges in $H^n$
are given by a perfect matching between $\pi^{-1}(v)$ and $\pi^{-1}(u)$ for
every undirected edge $(u,v) \in E(H)$. This characterization gives rise to
a natural distribution on $n$-lifts of $H$, by picking the perfect
matchings independently and uniformly at random.

We define 0-forms to be functions on the vertices of a graph, and 1-forms
to be functions on directed edges of a graph (with no constraint relating
$f(e)$ and $f(e^{-1})$.) Denote by $A=A(H)$ the adjacency operator acting
on the Hilbert space of $0$-forms, i.e., $(Af)(v) = \sum_{u\sim v} f(u)$
with multiple edges taken into account.) For graphs with minimal degree at
least two, we consider the non-backtracking adjacency operator, $B=B(H)$,
which acts on 1-forms. $B$'s action is given by
\[
(Bf)(e) = \sum_{e' \text{ s.t. } e\to e'} f(e').
\]
The definitions of $A,B$ are valid for any locally finite graph, and in
particular for $\tH$. When the degrees in a graph are globally bounded, $A$
and $B$ may be viewed as bounded operators on $l^2(V)$ and $l^2(E)$,
respectively. In the finite case, $B$ is the adjacency matrix of the
directed graph with an edge from $e_1$ to $e_2$ iff $e_2\to e_1$. Notice
that in the literature $B$ is occasionally defined as the adjoint of our
definition.

The {\em non-backtracking spectrum} of a graph $H$ is defined to be the
spectrum of the corresponding operator $B$ (as opposed to the spectrum of
$H$ which is the spectrum of $A$.) Note that $A$ is self-adjoint, hence any
graph's spectrum is real. However, the non-backtracking spectrum generally
contains complex numbers. $\tA=A(\tH)$ and $\tB=B(\tH)$ denote the
corresponding operators on $\tH$.

We denote by $\rho(u,v)$ the graph distance in $\tH$. We let $\gr$ denote
the growth rate of the universal cover $\tH$, i.e.
\[
 \gr = \lim_{r\to\infty} |{\tt Ball}(v,r)|^{1/r},
\]
where ${\tt Ball}(v,r)$ is the radius $r$ ball around some (arbitrary)
vertex $v \in V(\tH)$; for connected $H$ the limit is independent of the
choice of $v$. It is straightforward to see $\gr$ is the Perron-Frobenius
eigenvalue of $B$.

\section{Non-backtracking spectrum basics}
\label{se:spectrum_basics}

We start with some background from spectral theory. Let $T$ be a bounded
linear operator acting on a Hilbert space $\mathcal{H}$ (in this paper we
work with $l^2(X)$, where $X$ will be one of $V$, $\tV=V(\tH)$, $E$, or
$\tE=E(\tH)$). Recall that the spectrum of $T$ is
\[
\sigma(T) = \{\lam | \text{$T - \lam I$ does not have a bounded inverse}\},
\]
and its spectral radius is
\[
\rho(T) = \max\{|\lambda| : \lambda\in\sigma(T)\}
\]
(no confusion should arise regarding this $\rho$ and the distance, $\rho$,
that involves two arguments). If $\mathcal{H}$ is finite dimensional, then
$\lam \in \sigma(T)$ iff there is a nonzero $f$ satisfying $Tf=\lam f$. In
the general case we have \cite[Chapter~12]{Rud91}:

\begin{thm}\label{T:operator-spectrum}
  For a bounded linear operator $T\colon \mathcal{H}\to\mathcal{H}$, we
  have $\lam\in\sigma(T)$ iff one of the following three possibilities
  hold:
  \begin{enumerate} \romenum
  \item there exists a nonzero $f$ such that $(T-\lam I)f=0$,
  \item there exists a nonzero $f$ such that $(T^*-\overline{\lam} I)f=0$,
  \item for every $\eps>0$ exists a nonzero $f$ such that $\|(T-\lam I)f\|
    < \eps \|f\|$.
  \end{enumerate}
\end{thm}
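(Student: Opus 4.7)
The plan is to establish the equivalence by splitting into the two directions and using only the definition of the spectrum, the adjoint--kernel--range identity, and the open mapping theorem. Throughout let $S := T - \lambda I$, so that $\lambda \in \sigma(T)$ means exactly that $S$ has no bounded inverse, and note that $S^* = T^* - \overline{\lambda} I$.

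For the ``if'' direction I would argue that each of (i), (ii), (iii) individually obstructs bounded invertibility of $S$. If (i) holds, $S$ is not injective and so cannot be invertible. If (ii) holds, then $S^*$ is not injective; since a bounded operator on a Hilbert space is invertible iff its adjoint is, $S$ fails to be invertible. If (iii) holds but $S$ had a bounded inverse of norm $C$, every nonzero $f$ would satisfy $\|f\| \le C\,\|S f\|$, contradicting the existence of approximate eigenvectors.

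For the ``only if'' direction I would assume $S$ is not boundedly invertible and perform a case analysis. If $S$ is not injective, (i) holds, so assume $S$ is injective with range $R$. If $R$ is not dense, then the standard Hilbert space identity $\overline{R}^{\,\perp} = \ker(S^*)$ gives $\ker(T^* - \overline{\lambda} I) \neq \{0\}$, which is (ii). If $R$ is dense, I claim the algebraic inverse $S^{-1}\colon R \to \mathcal{H}$ is unbounded: were it bounded, it would extend uniquely to a bounded operator on $\overline{R} = \mathcal{H}$, and a one-line continuity argument shows this extension is a two-sided inverse for $S$, contradicting non-invertibility. Unboundedness of $S^{-1}$ on $R$ yields unit vectors $f_n \in \mathcal{H}$ with $\|S f_n\| \to 0$, which is (iii).

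The main subtlety is the dense-range case, where I need to rule out that $S$ could be ``almost invertible.'' This is exactly where the open mapping theorem does the real work (implicitly, via the continuous extension argument, or directly by noting that a bounded bijection between Banach spaces is automatically a homeomorphism, so a bijective $S$ would be boundedly invertible). Everything else---invertibility of $S$ versus $S^*$, and the identity $\overline{\operatorname{range}(S)}^{\,\perp} = \ker(S^*)$---is standard and would be cited rather than reproved.
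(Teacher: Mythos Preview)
Your argument is correct and is essentially the standard textbook proof: split on injectivity of $S=T-\lambda I$, use $\overline{\operatorname{ran}(S)}^{\perp}=\ker(S^*)$ for the non-dense-range case, and in the dense-range case show $S^{-1}$ is unbounded on its domain (else its bounded extension would be a genuine two-sided inverse).

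Note, however, that the paper does not actually give a proof of this theorem at all: it is quoted as a known fact with a citation to \cite[Chapter~12]{Rud91}. So there is nothing to compare your approach against; you have simply supplied a proof where the paper was content to invoke the literature. The argument you give is precisely the one found in standard functional analysis texts such as Rudin's, so in that sense you are reproducing the cited source rather than offering a genuinely different route.
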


In case (iii), a sequence of $f$'s with $\eps\to0$ is referred to as {\em
  approximate eigenfunctions}; case~(i) is a special case of (iii).

\begin{defn}
  A bounded linear operator $T$ is called {\em symmetric} (sometimes
{\em real symmetric} in the literature) if
  $\overline{T}^*=T$.
\end{defn}

If $T$ is symmetric, then case~(ii) of \thmref{operator-spectrum} is
equivalent to case~(i), and so $\lam$ is in the spectrum iff case~(iii)
holds, i.e., we have a sequence of approximate eigenfunctions for $\lam$.

While the operator $B$ is not symmetric, to identify its spectrum it
suffices to check only case~(iii). To see this, define the unitary operator
$U\colon l^2(E)\to l^2(E)$ by its action $(Uf)_e = f_{e^{-1}}$. Note that
$U^{-1}=U$. Since $B$ is real, it is easy to see that $U^{-1}BU = B^T =
B^*$. Consequently, if $f$ is an eigenfunction of $B^*$ with eigenvalue
$\overline{\lam}$, then $Uf$ is an eigenfunction of $B$ with eigenvalue
$\lam$. Thus case~(ii) of \thmref{operator-spectrum} holds for the same
$\lam$ as case~(i), and it suffices to check case~(iii).

\medskip

At this point we are ready to prove Theorem~\ref{T:small_spec}.  Before
doing so we make a few remarks on the theorem.

Note that if $G$ is a $d$-regular graph, $Q=(d-1)I$ and so
$Q_\lam=(\lam^2+d-1)I-\lam A$ is singular iff $\lam^2-\mu\lam+d-1=0$ for
some $\mu\in\sigma(A)$. \thmref{small_spec} is known (in more precision)
for finite graphs:

\begin{thm}[Bass~\cite{Ba92}, Kotani and Sunada~\cite{KS00}]
  \label{T:Q_finite}
  For any finite graph
  \[
  \det(B-\lam I) = (\lam^2-1)^{|E|-|V|} \cdot \det(Q_\lam).
  \]
\end{thm}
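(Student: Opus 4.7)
The plan is to prove this by the classical Ihara--Bass approach: construct a single $(|V|+2|E|)\times(|V|+2|E|)$ block matrix whose determinant can be evaluated in two different ways by Schur complement, one producing $\det(I-uB)$ and the other producing $(1-u^2)^{|E|-|V|}\det(I-uA+u^2Q)$, and then convert from the variable $u$ to $\lam$ by substituting $u=1/\lam$.

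First I would set up the combinatorial data. Let $n=|V|$, $m=|E|$, and define $n\times 2m$ incidence matrices $S,T$ by $S_{v,e}=1$ iff $v$ is the tail of $e$, and $T_{v,e}=1$ iff $v$ is the head of $e$; let $J$ be the edge-inversion involution, $J_{e,f}=[f=e^{-1}]$. Direct bookkeeping gives $SS^T=TT^T=D$ (the diagonal degree matrix), $ST^T=A$, $SJ=T$, $TJ=S$, and the key identity $T^TS=B+J$, because $(T^TS)_{f,e}=[\tau(f)=\sigma(e)]$ which equals $B_{f,e}+[e=f^{-1}]$. Note $Q=D-I$.

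Next I would introduce the block matrix
\[
M(u)=\begin{pmatrix} I_n & uS \\ T^T & I_{2m}+uJ \end{pmatrix}
\]
and evaluate $\det M(u)$ twice. The Schur complement with respect to the top-left block gives $\det M(u)=\det\bigl(I_{2m}+uJ-uT^TS\bigr)=\det(I-uB)$, using $T^TS=B+J$. Since $J^2=I$ and $J$ is a fixed-point-free involution on the $2m$ directed edges (so its characteristic polynomial is $(x-1)^m(x+1)^m$), we have $(I+uJ)^{-1}=(1-u^2)^{-1}(I-uJ)$ and $\det(I+uJ)=(1-u^2)^m$. The Schur complement with respect to the bottom-right block therefore gives
\[
\det M(u)=(1-u^2)^m\cdot(1-u^2)^{-n}\det\bigl((1-u^2)I_n-u(ST^T)+u^2(SJT^T)\bigr),
\]
and since $SJT^T=TT^T=D$ and $ST^T=A$, the interior is $(1-u^2)I-uA+u^2D=(1-u^2)(I-uA+u^2Q)/\text{(trivial)}$ after absorbing the $(1-u^2)I$ and $u^2I$ terms; the cleanest rewriting is $(1-u^2)I-uA+u^2D=(1-u^2)\cdot I-uA+u^2(Q+I)$, so equating the two evaluations yields
\[
\det(I-uB)=(1-u^2)^{m-n}\det(I-uA+u^2Q).
\]

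Finally, substitute $u=1/\lam$ and clear powers of $\lam$: $\det(I-uB)=\lam^{-2m}\det(\lam I-B)$, $\det(I-uA+u^2Q)=\lam^{-2n}\det(Q_\lam)$, and $(1-u^2)^{m-n}=\lam^{-2(m-n)}(\lam^2-1)^{m-n}$; the $\lam$-powers collapse and, using $\det(B-\lam I)=(-1)^{2m}\det(\lam I-B)=\det(\lam I-B)$, we arrive at the stated identity. The only genuine obstacle is choosing the right block matrix $M(u)$; once the algebraic identity $T^TS=B+J$ is isolated (this is the one step encoding the graph-theoretic meaning of non-backtracking: subtracting $J$ removes exactly the ``backtracking'' transitions), the rest is two Schur-complement calculations plus careful tracking of powers of $\lam$ and $(1-u^2)$.
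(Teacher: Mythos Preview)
Your argument is correct and is the classical Bass proof. The paper itself does not prove this theorem: it is stated with attribution to Bass and to Kotani--Sunada, and the only hint the paper gives toward a proof is a remark (in the note following the proof of \thmref{small_spec}) that one could argue as in that proof, replacing the infinite path and approximate eigenfunctions by cycles and exact eigenfunctions. So there is no proof in the paper to compare against; your block-matrix/Schur-complement route is the standard one from the literature.

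One cosmetic point: the line ``the interior is $(1-u^2)I-uA+u^2D=(1-u^2)(I-uA+u^2Q)/\text{(trivial)}$'' is not correct as an equality---there is no extra factor of $(1-u^2)$ to peel off. What actually happens, and what you then write, is that $(1-u^2)I-uA+u^2D = I - uA + u^2Q$ exactly (using $D=Q+I$), so the $(1-u^2)^{-n}$ you already extracted combines with $(1-u^2)^m$ to give the exponent $m-n$. With that line cleaned up the computation is complete; the passage from $u$ to $\lam$ is an identity of polynomials, so verifying it for $\lam\ne 0$ suffices.
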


\begin{note}
  A version of \thmref{Q_finite} holds for graphs with multiple edges and
  self-loops. Each half-loop (see Section~\ref{se:self}) contributes a
  factor of $\lam-1$, rather than $\lam^2-1$.
\end{note}

We thank Chen Greif for the following remark:
\begin{note}
  Consider the $2n\times2n$ matrix $X=\left(\begin{smallmatrix} A&-Q\\I&0
    \end{smallmatrix}\right)$. One can easily verify by row elimination
  that $\det(Q_\lam)=\det(X-\lam I)$. This observation has practical
  importance for calculating the non-backtracking spectrum of specific
  graphs, since eigenvalue calculation is a standard feature in most
  numeric/symbolic matrix manipulation programs.
\end{note}

\begin{proof}[Proof of \thmref{small_spec}]
  If $G$ is finite, then \thmref{Q_finite} implies our result, hence we may
  restrict ourselves to infinite $G$. The result follows from the following
  three statements:
  \begin{enumerate}
  \item we have $\pm1 \in \sigma(B)$;
  \item if $\lam\ne\pm1$ and $Q_\lam$ is not invertible, then neither is
    $B-\lam I$;
  \item if $\lam\ne\pm1$ and $B-\lam I$ is not invertible, then neither is
    $Q_\lam$.
  \end{enumerate}

  To prove (1), we explicitly construct approximate eigenfunctions. $G$
  contains some infinite path of directed edges $\{e_i\}_{i\ge0}$. For any
  $k$, define the function
  \[
  f = \sum_{i=1}^{k} \delta_{e_i} - \sum_{i=0}^{k-1} \delta_{e_i^{-1}},
  \]
  i.e., $f$ is 1 on $e_1,\ldots,e_k$, $f$ is $-1$ on $e_0^{-1}, \dots,
  e_{k-1}^{-1}$, and zero on all other edges. One can easily check that
  \[
  (B-I)f = \delta_{e_0} - \delta_{e_0^{-1}} - \delta_{e_k} +
  \delta_{e_k^{-1}}.
  \]
  Thus $\|(B-I)f\| = \sqrt{4}$, while $\|f\|=\sqrt{2k}$, and so taking
$k\to\infty$ gives
approximate eigenfunctions for
  $\lam=1$.

  $\lam=-1$ is similar, with
  \[
  f = \sum_{i=1}^{k} (-1)^i \delta_{e_i} + \sum_{i=0}^{k-1} (-1)^i
  \delta_{e_i^{-1}}.
  \]

  To prove (2), assume $Q_\lam$ is not invertible. Since $Q_\lam$ is
  symmetric, as noted above
  it has approximate (perhaps exact) eigenfunctions. Thus for every $\eps$
  there is an $f$ such that $\|f\|=1$ and $\|Q_\lam f\|<\eps$. Note that
  $f$ is defined on the vertices of $G$. From $f$ we construct a 1-form,
  $g$, given by $g_{uv} = f_u - \lam f_v$. Then:
  \begin{align*}
    \big((B-\lam I)g\big)_{uv}
    & = \sum_{\substack{w\sim v\\w\ne u}} g_{vw} - \lam g_{uv}
      = \sum_{w \sim v} g_{vw} - g_{vu} - \lam g_{uv}  \\
    & = \sum_{w \sim v} (f_v - \lam f_w) + (\lam^2-1)f_v  \\
    & = (d_v-1)f_v - \lam (Af)_v + \lam^2 f_v = (Q_\lam f)_v.
  \end{align*}
  Therefore,
  \[
  \|(B-\lam I)g\|^2 \le \dmax \cdot \|Q_\lam f\|^2 < \eps^2 \dmax.
  \]
  It remains to show that $\|g\|$ is bounded from below. To this end, note
  that
  $\left(\begin{smallmatrix} g_{uv} \\ g_{vu} \end{smallmatrix}\right) =
  \left(\begin{smallmatrix} 1&-\lam\\ -\lam&1 \end{smallmatrix}\right)
  \left(\begin{smallmatrix} f_u \\ f_v \end{smallmatrix}\right)$
  is an invertible linear transformation for $\lam\ne\pm1$. Thus for
  $\lam\ne\pm1$, for some $c=c(\lam)>0$
  \[
  |g_{uv}|^2 + |g_{vu}|^2 \ge c\left(|f_u|^2+|f_v|^2 \right).
  \]
  Summing over all edges we find $\|g\|^2 \ge c$.

  To prove (3), let $S_o$, $S_i$ be the outbound and inbound sum operators
  from 1-forms to 0-forms, defined by $(S_o h)_v = \sum_{u \sim v} h_{vu}$,
  and $(S_i h)_v = \sum_{u \sim v} h_{uv}$. For any $v$, and any 1-form
  $h$:
  \begin{align*}
    (S_i Bh)_v
    = \sum_{u \sim v} (Bh)_{uv}
    &= \sum_{\substack{u,w \sim v \\w\ne u}} h_{vw}
    = (d_v-1) (S_o h)_v \\
    (S_o Bh)_v
    = \sum_{u \sim v} (Bh)_{vu}
    &= \sum_{\substack{w\sim u\sim v\\w \ne v}} h_{uw}
    = (A S_o h - S_i h)_v.
  \end{align*}
  And so we have
  \[
    S_i B = Q S_o,  \qquad \text{ and } \qquad   S_o B = A S_o - S_i.
  \]

  Assume $B-\lam I$ is not invertible for some $\lam\ne\pm1$. As before,
  since $B$ is real, for any $\eps>0$ there is a function $g$ such that
  $\|g\|=1$, and $\|(B-\lam I)g\|<\eps$. We claim that
  $f = S_o g$ is an approximate eigenfunction of $Q_\lam$. Indeed:
  \begin{align*}
    \|Q_\lam f\|
    & = \|(\lam^2 I - \lam A + Q) S_o g\|  \\
    & = \|(\lam^2 S_o - \lam S_o B - \lam S_i + S_i B)g\|  \\
    & = \|(S_i-\lam S_o)(B-\lam I)g\|  \\
    & \le (1+|\lam|) \sqrt{\dmax} \, \eps
  \end{align*}
  (since $\|S_o\|=\|S_i\| = \sqrt\dmax$). Furthermore, for $\lam\ne\pm1$ we
  shall bound $\|f\|$ from below. For any edge $vu$,
  \[
  (S_o g)_v = (Bg)_{uv} + g_{vu} -\lam g_{uv} + \lam g_{uv};
  \]
  notice that $S_i^*$ is the ``head map,'' $(S_i^*h)_{uv}=h_v$ for all
  $h\in L^2(V)$, and similarly $S_o^*$ is the ``tail map.'' So we may write
  the last equation as $S_i^* S_o = (B-\lam I)+(\lam I+U)$ (recall
  $(Ug)_{uv}=g_{vu}$.) Since $\|g\|=1$, $U^2=1$ and $\|U\|=1$, we have
  \begin{align*}
  |\lam^2-1| = \|(\lam^2-1)g\| &= \|(\lam-U)(\lam+U)g\| \\
  &\le (|\lam|+1)\|(\lam+U)g\|.
  \end{align*}
  Since clearly $\|h\|^2\ge \|S_i^* h\|^2/\dmax$ for any $h\in L^2(V)$, we
  have
  \begin{align*}
    \sqrt{\dmax} \|f\| \ge \|S_i^*f\| = \|S_i^* S_o g\|
    &\ge \|(\lam I+U)g\|-\|(B-\lam I)g\| \\
    &\ge \frac{|\lam^2-1|}{|\lam|+1} - \|(B-\lam I)g\|.
  \end{align*}
  Since $\|(B-\lam I)g\|<\eps$, it follows that for $\eps$ small enough
  $\|f\|$ is bounded from below.
\end{proof}

\begin{note}
  It is possible to prove \thmref{Q_finite} by a similar argument, with
  cycles taking the place of the infinite path, resulting in exact
  eigenfunctions; the eigenspace of 1 is spanned by functions which are 1
  on edges of an oriented cycle and -1 on the reverse cycle. The eigenspace
  of -1 is similar but with alternating signs along cycles of even length.
\end{note}

The following theorem restricts the non-backtracking spectrum of $G$ to the
union of an annulus in the complex plain, and two real intervals. For
finite graphs, the theorem is due to Kotani and Sunada~\cite{KS00}.

\begin{thm} \label{T:annulus}
  Let $\dmin \ge 2$ and $\dmax$ be the minimal and maximal degrees of some
  (finite or infinite) graph $G$. Then
  \[
  \sigma(B) \subset \left\{\lam\in\C :
      \sqrt{\dmin-1} \le |\lam| \le \sqrt{\dmax-1}\right\} 
    \bigcup \left\{\vphantom{\sqrt{\dmin}}
      \lam\in\R : 1 \le |\lam| \le \dmax-1 \right\}.
  \]
\end{thm}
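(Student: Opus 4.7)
The overall strategy is to reduce via \thmref{small_spec} to analyzing when $Q_\lam$ fails to be invertible; the points $\pm 1$ automatically lie in the real segment $[1,\dmax-1]\cup[-(\dmax-1),-1]$ since $\dmax\ge\dmin\ge 2$. A preliminary uniform upper bound $|\lam|\le\dmax-1$ will follow immediately from the observation that every row and column of $B$ has at most $\dmax-1$ nonzero entries, each equal to $1$: a Schur/Riesz--Thorin estimate then gives $\|B\|_{\ell^2\to\ell^2}\le\sqrt{\|B\|_1\|B\|_\infty}\le\dmax-1$, so $\rho(B)\le\dmax-1$. It remains to establish the two-sided bound $\sqrt{\dmin-1}\le|\lam|\le\sqrt{\dmax-1}$ for non-real spectral points, and the lower bound $|\lam|\ge 1$ for real ones.

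For non-real $\lam\in\sigma(B)\setminus\{\pm1\}$, the plan is to use \thmref{operator-spectrum} to produce (approximate) eigenvectors $\{f_n\}\subset\ell^2(V)$ with $\|f_n\|=1$ and either $Q_\lam f_n\to 0$ or $Q_{\bar\lam}f_n\to 0$, then take inner products. Since $A$ is self-adjoint, $\mu_n:=\langle f_n,Af_n\rangle$ is real, while $c_n:=\sum_v(d_v-1)|(f_n)_v|^2$ lies in $[\dmin-1,\dmax-1]$. The relation $\langle f_n,Q_\lam f_n\rangle\to 0$ becomes $\lam^2-\lam\mu_n+c_n\to 0$. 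Writing $\lam=x+iy$ with $y\ne 0$, the imaginary part forces $\mu_n\to 2x$, and the real part then forces $c_n\to x^2+y^2=|\lam|^2$, yielding $\sqrt{\dmin-1}\le|\lam|\le\sqrt{\dmax-1}$.

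The main obstacle is the lower bound $|\lam|\ge 1$ for real $\lam\in\sigma(B)\setminus\{\pm1\}$: the inner-product argument above is too weak in this regime, since it only constrains $|\mu|\le\dmax$, which by itself would allow spurious small real $\lam$. The crux will be the sharper identity
\[
\langle f,Q_\lam f\rangle=\tfrac{1}{2}\|g\|^2+\tfrac{1-\lam^2}{2}\sum_v(d_v-2)|f_v|^2,\qquad g_{uv}:=f_u-\lam f_v\in\ell^2(E),
\]
valid for real $\lam$, which follows by expanding $\|g\|^2=(1+\lam^2)\sum_v d_v|f_v|^2-2\lam\langle f,Af\rangle$ and rearranging. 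Under the hypothesis $\dmin\ge 2$, both summands on the right are non-negative whenever $|\lam|\le 1$. The edgewise bound $|g_{uv}|^2+|g_{vu}|^2\ge(1-|\lam|)^2(|f_u|^2+|f_v|^2)$, obtained from $|\mathrm{Re}(f_u\bar f_v)|\le\tfrac12(|f_u|^2+|f_v|^2)$, sums to $\|g\|^2\ge\dmin(1-|\lam|)^2\|f\|^2\ge 2(1-|\lam|)^2\|f\|^2$, so $\langle f,Q_\lam f\rangle\ge(1-|\lam|)^2\|f\|^2$. Hence $Q_\lam$ is strictly positive and invertible whenever $\lam\in\R$ with $|\lam|<1$, and \thmref{small_spec} excludes such $\lam$ from $\sigma(B)$. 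Spotting the decomposition above is the conceptual step; once it is in hand, the assumption $\dmin\ge 2$ converts directly into the separation $|\lam|\ge 1$.
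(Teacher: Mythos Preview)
Your argument is correct, but it diverges from the paper's in how it establishes the bound $1\le|\lam|\le\dmax-1$. The paper does this in one stroke, directly on $B$, without invoking \thmref{small_spec}: it observes that $UB$ (where $U$ is the edge-reversal involution) is block-diagonal, with one block per vertex $v$ equal to the $d_v\times d_v$ matrix $J-I$, whose spectrum is $\{-1,d_v-1\}$. Since $U$ is unitary this gives $\|Bf\|/\|f\|\in[1,\dmax-1]$ for every nonzero $f$, and approximate eigenfunctions then force $1\le|\lam|\le\dmax-1$ for \emph{all} $\lam\in\sigma(B)$, real or not. For the non-real part the paper does exactly what you do: pass to $Q_\lam$, take the inner product, extract real $\alpha,\beta$ with $\beta\in[\dmin-1,\dmax-1]$, and read off $|\lam|^2=\beta$ from the quadratic.

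Your route replaces the $UB$ step by two separate arguments: a Schur bound for $|\lam|\le\dmax-1$, and the positivity identity $\langle f,Q_\lam f\rangle=\tfrac12\|g\|^2+\tfrac{1-\lam^2}{2}\sum_v(d_v-2)|f_v|^2$ (which I verified) together with \thmref{small_spec} for the real lower bound $|\lam|\ge1$. This is a legitimate alternative; its cost is the dependence on \thmref{small_spec} for a fact the paper gets ``for free'' from the block structure of $UB$, while its benefit is a transparent explanation of exactly where the hypothesis $\dmin\ge2$ enters the real case (through the sign of $d_v-2$). The paper's $UB$ trick is shorter and self-contained, but your decomposition makes the role of $Q_\lam$ uniform across both the real and non-real regimes.
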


\begin{proof}
  We first prove that
  \[
  \sigma(B) \subset \left\{\lam\in\C : 1 \le |\lam| \le \dmax-1\right\}.
  \]
  Fix some vertex $v$. Clearly $UB$ acts on functions supported on edges
  directed out of $v$ independently of its action on all other edges. On
  these $d_v$ edges, the action of $UB$ is given by the matrix $J-I$ where
  $J$ is the all ones matrix. Since $\sigma(J-I) = \{-1,d_v-1\}$ and since
  $J-I$ (being symmetric) is orthonormally diagonalizable, we easily see
  that
  \[
  \frac {\|(J-I)f\|} {\|f\|} \in [1,d_v-1]
  \]
  for any function $f$ supported on these edges;
  therefore, since $U$ is unitary, for any $1$-form, $f$, we have
  \[
  \frac{\|Bf\|}{\|f\|} = \frac{\|UBf\|}{\|f\|} \in [1,\dmax-1].
  \]
  If $\lam\in\sigma(B)$, then we have for any $\eps$ an approximate
  eigenfunction, $f$, with $\|f\|=1$ and $\|(B-\lam)f\|\le \eps$. So
  \[
  |\lam|=\|\lam f\| \le \| \lam f-Bf\| + \|Bf\| \le \eps+\dmax-1.
  \]
  Taking $\eps\to0$ yields $|\lam|\le\dmax-1$. Similarly
  \[
  |\lam| = \|\lam f\| \ge \| Bf\| - \| Bf-\lam f\| \ge 1-\eps,
  \]
  and we conclude $\|\lam\|\ge 1$.

  It remains to prove that the non-real $\lam \in \sigma(B)$ satisfy
  $\sqrt{\dmin-1}\le |\lam| \le \sqrt{\dmax-1}$. Given $f\in L^2(V)$,
  define
  \[
  \alpha = \frac{\langle f,Af\rangle}{\|f\|^2},   \quad
  \beta = \frac{\langle f,Qf\rangle}{\|f\|^2},
  \]
  and note that $\langle f,Q_\lam f\rangle = \|f\|^2\cdot(\lam^2 -
  \alpha\lam +\beta)$. Further, note that since $A,Q$ are real symmetric
  operators, that $\alpha,\beta$ are real. From the definition of $A,Q$, we
  clearly have $|\alpha|\le\dmax$ and $\beta \in [\dmin-1,\dmax-1]$.

  If $f$ is an eigenfunction then $\lam^2 - \alpha\lam +\beta=0$. If
  $Q_\lam$ has only approximate eigenfunctions, $f_\eps$, then for those we
  have $\alpha=\alpha_\eps,\beta=\beta_\eps$ satisfying
  \[
  |\lam^2 - \alpha_\eps \lam+\beta_\eps| = |\langle f,Q_\lam f \rangle |
  \le \eps
  \]
  and as before, $|\alpha_\eps|\le \dmax$ and $\beta_\eps \in
  [\dmin-1,\dmax+1]$. By passing to a subsequence we may assume that
  $\alpha_\eps$ and $\beta_\eps$ have limits, $\alpha,\beta$, as
  $\eps\to0$. We find that, again, $\lam^2-\alpha\lam+\beta=0$ with real
  $\alpha,\beta$ satisfying $\beta\in[\dmin-1,\dmax-1]$.

  Solving the quadratic equation we get $\lam=\alpha/2 \pm
  \sqrt{\alpha^2/4-\beta}$. If $\lam$ is not real then the square root is
  of a negative number and
  \[
  |\lam|^2 = (\alpha/2)^2 + \beta-\alpha^2/4 = \beta,
  \]
  and the claim follows.
\end{proof}

\section{Non-backtracking spectrum of trees}

The non-backtracking spectrum of a tree has a fourfold symmetry that
is not hard to rigorously prove:

\begin{prop}\label{P:four_symmetry}
  For any bipartite graph, $G$, $\sigma(B(G))$ is invariant under
  reflection in the real and imaginary axis.
\end{prop}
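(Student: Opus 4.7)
The plan is to exhibit two simple symmetries of $B=B(G)$: one coming from the fact that $B$ is a real operator (giving reflection in the real axis), and one coming from bipartiteness (giving reflection in the imaginary axis). Together these generate the Klein four-group action and yield the claim.

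For reflection in the real axis, I would use that $B$ is a real operator on $\ell^2(E)$. If $\lambda\in\sigma(B)$, apply \thmref{operator-spectrum}. In case (i) or (ii), conjugate a (possibly approximate) eigenfunction $f$ coordinatewise: $B\bar f=\overline{Bf}$, so $\bar f$ is an eigenfunction of $B$ (respectively $B^*$) with eigenvalue $\bar\lambda$ (respectively $\overline{\bar\lambda}$); in case (iii), $\|(B-\bar\lambda I)\bar f\|=\|(B-\lambda I)f\|<\varepsilon\|f\|=\varepsilon\|\bar f\|$. Thus $\bar\lambda\in\sigma(B)$.

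For reflection in the imaginary axis, I would exploit a bipartition $V(G)=V_0\sqcup V_1$ to build a unitary conjugation sending $B$ to $-B$. Define a signing of directed edges by $\varepsilon(e)=+1$ if $e$ goes from $V_0$ to $V_1$ and $\varepsilon(e)=-1$ otherwise, and let $D\colon\ell^2(E)\to\ell^2(E)$ be the diagonal unitary $(Df)(e)=\varepsilon(e)f(e)$; note $D^{-1}=D$. The key observation is that whenever $e\to e'$, the heads/tails force $e$ and $e'$ to traverse opposite sides of the bipartition, so $\varepsilon(e)\varepsilon(e')=-1$. Therefore
\begin{equation*}
(DBDf)(e)=\varepsilon(e)\sum_{e\to e'}\varepsilon(e')f(e')=-\sum_{e\to e'}f(e')=-(Bf)(e),
\end{equation*}
so $DBD^{-1}=-B$. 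Unitarily equivalent operators have equal spectra, giving $\sigma(B)=\sigma(-B)=-\sigma(B)$.

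Combining the two, $\sigma(B)$ is closed under $\lambda\mapsto\bar\lambda$ and $\lambda\mapsto-\lambda$, hence also under $\lambda\mapsto-\bar\lambda$, which is exactly invariance under reflection in both coordinate axes. There is no real obstacle here; the only thing to be careful about is that $B$ is not self-adjoint, so one must handle the three cases of \thmref{operator-spectrum} uniformly, which both arguments above do.
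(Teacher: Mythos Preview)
Your proof is correct and essentially identical to the paper's: both use that $B$ is real for the reflection in the real axis, and both conjugate $B$ by the diagonal unitary that negates $f_e$ according to which side of the bipartition the tail of $e$ lies in, obtaining $DBD^{-1}=-B$. The paper simply states the real-operator fact without spelling out the three cases of \thmref{operator-spectrum}, but your explicit case analysis is fine.
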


As we will be primarily concerned with the non-backtracking spectrum of
trees, note that this result applies whenever $G$ is a tree. 

\begin{proof}
  Since (for any graph) $B$ is a real operator, $\sigma(B)$ is invariant
  under complex conjugation.

  For a bipartite graph $G$, let $X,Y$ be the two sides of the graph.
  Consider the unitary operator $N$ on 1-forms that negates $f_{uv}$ for
  $u\in X$ and leaves $f_{uv}$ unchanged for $u\in Y$. Clearly $N^{-1}=N$
  and it is easy to see that $N^{-1}BN=-B$. Thus $\sigma(B) = -\sigma(B)$.
\end{proof}

\pagebreak

The next result improves on the upper bound of \thmref{annulus} for
sufficiently nice trees.  A graph has {\em uniformly bounded growth},
$\overline\gr$, if
\[
\overline\gr = \limsup M_r^{1/r}
\]
is finite, where
\[
M_r = \sup_v |{\tt Ball}(v,r)|.
\]

\begin{thm}\label{T:tree_annulus}
  Assume a tree without leaves has uniformly bounded growth $\overline\gr$,
  then
  \[
  \sigma(B) \subset \{ \lam : 1 \le |\lam| \le \sqrt{\overline\gr} \}.
  \]
\end{thm}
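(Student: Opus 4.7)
The lower bound $|\lam|\ge 1$ is immediate: a tree without leaves has $\dmin\ge 2$, so Theorem~\ref{T:annulus} applied to $T$ already gives $|\lam|\ge 1$ for every $\lam\in\sigma(B)$.

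For the upper bound the natural approach is Gelfand's spectral-radius formula, $\rho(B)=\lim_n \|B^n\|^{1/n}$, combined with a tree-specific bound on $\|B^n\|$. Concretely, I would aim to prove an estimate of the form
\[
\|B^n\|^{2} \;\le\; c\, M_{n+k}
\]
for absolute constants $c,k$ (possibly times a sub-exponential factor). Taking $n$th roots and letting $n\to\infty$ gives $\rho(B)^2\le\overline\gr$, hence $|\lam|\le\sqrt{\overline\gr}$ for every $\lam\in\sigma(B)$.

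To establish the estimate, I would analyse the positive self-adjoint operator $B^n (B^*)^n$ whose matrix entries are
\[
\bigl(B^n (B^*)^n\bigr)(e_1,e_2) \;=\; \bigl|\{g\in E(T): e_1\to\cdots\to g \text{ and } e_2\to\cdots\to g,\ \text{both of length }n{+}1\}\bigr|.
\]
Because $T$ is a tree, non-backtracking walks are simple paths, and any two walks of length $n{+}1$ ending at the same directed edge $g$ must agree on a terminal segment starting from some \emph{merge edge} $f$. Parametrising by $f$ decomposes $B^n(B^*)^n$ as a sum (over $f$ and over the length $j$ of the common tail) of rank-one blocks whose supports are the sets $\{e:e\to^j f\}$; these sets contain at most $N(f^{-1},j)\le M_j$ edges, and for each $f$ only pairs of walks of equal length $n-j$ into two different children of $\text{head}(f)$ contribute. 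Summing the resulting rank-one contributions and using $N(\cdot,m)\le M_m$ yields the bound above.

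\textbf{Main obstacle.} The hard point is obtaining the exponent $\sqrt{M_n}$ rather than $M_n$. A direct Schur / Cauchy--Schwarz bound gives only $\|B^n\|\le M_n$, which yields the weaker conclusion $\rho(B)\le\overline\gr$; closing the gap requires exploiting the non-normality of $B$ (one has $\rho(B)\ll\|B\|$ in general) and the additional restriction, noted above, that the two walks from $e_1$ and $e_2$ to $g$ must have the \emph{same} length, so that off-diagonal entries of $B^n(B^*)^n$ are supported only on very specific pairs. If this direct route proves stubborn, an alternative is to invoke Theorem~\ref{T:small_spec}: for $\lam\ne\pm 1$ we have $\lam\in\sigma(B)$ iff $Q_\lam=Q-\lam A+\lam^2 I$ is not invertible, and on a tree one can construct the Green function for $Q_\lam$ by a recursion propagating from each vertex to its children. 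Each recursion step contributes a factor of order $|\lam|^{-1}$ per traversed edge, so the Green function is in $\ell^2$ and defines a bounded inverse precisely when $|\lam|^{-2}\overline\gr<1$, i.e.\ $|\lam|^2>\overline\gr$, giving exactly the required upper bound.
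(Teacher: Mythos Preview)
Your overall strategy---Gelfand's formula plus a bound $\|B^k\|\le C_k$ with $C_k^{1/k}\to\sqrt{\overline\gr}$---is exactly the paper's, and your identification of the obstacle (naive Schur gives only $\|B^k\|\le M_k$) is accurate. What is missing is the device that closes that gap.

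The paper does \emph{not} pass to $B^n(B^*)^n$. It bounds $|\langle g,B^k f\rangle|=\bigl|\sum_{e'\to_k e} f(e)g(e')\bigr|$ directly by a weighted AM--GM (equivalently, a Schur test with weights): choose positive $a(e,e')$ and bound the sum by
\[
\Bigl(\sup_e\sum_{e'\to_k e} a(e,e')\Bigr)\|f\|^2+\Bigl(\sup_{e'}\sum_{e'\to_k e} a(e,e')^{-1}\Bigr)\|g\|^2.
\]
The key idea is the choice of $a$: fix an arbitrary root, and for the unique non-backtracking path from $e'$ to $e$ let $i$ be the number of steps it takes \emph{toward} the root before turning away; set $a(e,e')=\overline\gr^{\,i-k/2}$. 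Then for each fixed $i$ the number of length-$k$ paths from a given $e'$ (resp.\ to a given $e$) with that turning depth is at most $c(1+\eps)^{k-i}\overline\gr^{\,k-i}$ (resp.\ $c(1+\eps)^{i}\overline\gr^{\,i}$), and both suprema above collapse to $c(k+1)(1+\eps)^k\overline\gr^{\,k/2}$. This is precisely the ``split at the turning point'' that your merge-edge decomposition is reaching for, but executed on $B^k$ itself rather than on $B^n(B^*)^n$; the root-dependent weight $\overline\gr^{\,i-k/2}$ is the concrete ingredient your plan lacks.

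Your alternative via Theorem~\ref{T:small_spec} and the explicit ratio $r_e\equiv\lambda^{-1}$ for $Q_\lambda$ is the content of Corollary~\ref{C:spectral_radius}, but note two issues: (i) it proves invertibility only after the machinery of Sections~5--7 is in place, so it cannot serve as a self-contained proof at this point in the paper; (ii) that machinery is developed for universal covers of \emph{finite} graphs, whereas Theorem~\ref{T:tree_annulus} is stated for arbitrary leafless trees of uniformly bounded growth. Extending the $\ell^2$-boundedness of the resulting Green kernel to that generality again requires a Schur-type estimate, so you do not escape the main difficulty.
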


Note that for graphs with uniformly bounded degrees, $M_r =
O((\dmax-1)^r)$, and so $\overline\gr$ is finite. For universal covers of
finite graphs, $\overline\gr = \gr = \lim_{r\to\infty} |{\tt
  Ball}(v,r)|^{1/r}$ for any fixed $v$. We shall later see that for
universal covers of finite graphs, \thmref{tree_annulus} is sharp and in
fact $\{ \lam : |\lam|=\sqrt{\overline\gr} \} \subset \sigma(B)$.

\begin{proof}
  Denote $e \to_k e'$ if there is a non-backtracking path $e \to e_1 \to
  \cdots \to e_{k-1} \to e'$. Note that if such a path exists, then it is
  unique and its length is determined by $e,e'$. By definition,
  \[
  \|B^k\| = \sup_{\|f\|=\|g\|=1} \big| \langle g,B^kf \rangle \big|.
  \]
  For $f,g$ with $\|f\|=\|g\|=1$, and any collection of coefficients
  $a(e,e')>0$, we have by Cauchy-Schwarz 
  \begin{align*}
  \big| \langle g,B^k f \rangle \big|
     & = \left| \sum_{e'\to_k e} f(e) g(e') \right|           \\
     &\le \sum_{e'\to_k e}  a(e,e') |f(e)|^2 + a(e,e')^{-1} |g(e')|^2  \\
     &\le \left(\sup_e \sum_{e'\to_k e} a(e,e')\right) \|f\|^2
        + \left(\sup_{e'} \sum_{e'\to_k e} a(e,e')^{-1}\right) \|g\|^2.
  \end{align*}

  It remains to choose the coefficients $a(e,e')$ so that the sums above
  are small. To this end, fix some (arbitrary) vertex in the tree to be the
  root. The unique path from $e'$ to $e$ must descend toward the root some
  number of steps $i$, and then ascend $k-i$ steps on a different branch to
  reach $e$. Fix $a(e,e')=\overline\gr^{(i-k/2)}$.

  For any $\eps>0$ there is some $c=c(\eps)$, such that the total number of
  non-backtracking paths of length $\ell$ from any vertex is bounded by
  $c\cdot((1+\eps)\overline\gr)^\ell$. Since there is always a unique path
  toward the root, the number of paths starting at $e'$ with $i,k$ as above
  is at most $c\cdot((1+\eps)\overline\gr)^{k-i}$. Consequently
  \begin{align*}
    \sup_e \sum_{e'\to_k e} a(e,e')
    &\le \sum_{i=0}^k c((1+\eps)\overline\gr)^{k-i}
                                \overline\gr^{(i-k/2)} \\
    &\le c(k+1)(1+\eps)^k {\overline\gr}^{k/2}.
  \end{align*}
  Similarly, the number of paths of type $i$ ending at $e$ is bounded by
  $c_\delta((1+\eps)\overline\gr)^i$, and hence also
  \[
  \sup_{e'} \sum_{e'\to_k e} a(e,e')^{-1} \le c(k+1)(1+\eps)^k
  {\overline\gr}^{k/2}.
  \]

  Combining the bounds we find
  \[
  \rho(B) = \lim \big(\|B^k\|\big)^{1/k} \le (1+\eps) \sqrt{\overline\gr}.
  \]
  Since $\eps$ was arbitrary, this proves the claim.
\end{proof}

\section{The Green function and the Relative Kernels}

Let $\pi\from T\to H$ and $M$ as in Theorem~\ref{T:main}. By deleting
edges $e=\{u,v\}$ from $E(H)$ for which $m_{uv}=0$, we may assume $M$ is
connected, i.e., $m_{uv}\ne 0$ whenever $(u,v)\in E(V)$. If $e=(u,v)$ is an
edge of $T$ or $H$, we write $m_e$ for $m_{uv}$.

Assume that $M$ is invertible. Then there is a unique {\em Green function},
$G=G_M$, defined by the requirement that $MG(x,\cdot) = \delta_x(\cdot)$,
or equivalently,
\[
G(u,v) = (M^{-1}\delta_u)(v).
\]
Notice that the symmetry $M^*=\overline{M}$ implies the symmetry of the
Green function,
\begin{align*}
  G(u,v)= \langle M^{-1}\delta_u, \delta_v \rangle
  &= \langle \delta_u, (M^*)^{-1} \delta_v \rangle \\
  &= \overline{\langle (\overline{M})^{-1} \delta_v,\delta_u \rangle }
  = \langle M^{-1} \delta_v,\delta_u \rangle =G(v,u).
\end{align*}

\begin{prop}\label{pr:symmetry}
  Let $\pi\from T\to H$, $M$, and $G$, be as above. Let $\sigma$ be an
  automorphism of $T$ for which $\sigma^* M = M\sigma^*$, where $\sigma^*$
  is the pullback (i.e., $\sigma^* f = f\circ\sigma$). Then
  $G(x,y)=G(\sigma x,\sigma y)$ for all $x,y\in V(T)$.
\end{prop}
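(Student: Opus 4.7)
The plan is to show that the function $f(y) := G(\sigma x, \sigma y)$ coincides with $G(x,y)$ by exploiting the uniqueness of the $\ell^2$ solution of $M g = \delta_x$. Observe first that $f$ can be written as a pullback, $f = \sigma^*\bigl(G(\sigma x,\,\cdot\,)\bigr)$. Since $\sigma$ is a graph automorphism, it is a bijection of $V(T)$, so $\sigma^*$ is a unitary operator on $\ell^2(V(T))$; in particular $f$ is in $\ell^2$ with the same norm as $G(\sigma x,\,\cdot\,)$.

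Next, I apply $\sigma^*$ to the defining equation $M G(\sigma x,\,\cdot\,) = \delta_{\sigma x}$. The right-hand side transforms as $\sigma^* \delta_{\sigma x} = \delta_x$, because $(\sigma^*\delta_{\sigma x})(y) = \delta_{\sigma x}(\sigma y) = \delta_x(y)$. For the left-hand side, the hypothesis $\sigma^* M = M \sigma^*$ lets me commute the pullback past $M$, giving $M \sigma^* G(\sigma x,\,\cdot\,) = M f$. Combining, $M f = \delta_x$.

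Finally, because $M$ is invertible, the equation $M g = \delta_x$ has a unique solution in $\ell^2(V(T))$, namely $G(x,\,\cdot\,)$. Hence $f = G(x,\,\cdot\,)$, which reads $G(\sigma x, \sigma y) = G(x,y)$ for all $y$, and since $x$ was arbitrary this finishes the proof.

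The argument is essentially formal; I do not anticipate a real obstacle. The only point requiring a bit of care is verifying that $\sigma^*$ is $\ell^2$-bounded (equivalently, that $\sigma$ is measure-preserving on the counting measure of $V(T)$), which is automatic from $\sigma$ being a bijection, and that the commutation $\sigma^* M = M \sigma^*$ is applied on vectors in the domain of $M$, which is all of $\ell^2$ since $M$ is bounded.
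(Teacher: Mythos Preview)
Your proof is correct and follows essentially the same approach as the paper's: both use that $\sigma^*$ is unitary, the commutation $\sigma^* M = M\sigma^*$, and the transformation of delta functions under pullback to identify $G(\sigma x,\sigma\,\cdot\,)$ with $G(x,\,\cdot\,)$. The paper packages this as a one-line inner-product computation with $M^{-1}$, while you phrase it via uniqueness of the $\ell^2$ solution to $Mg=\delta_x$; the content is the same.
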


\begin{proof}
  We have $\delta_{\sigma z}=\sigma^*\delta_z$ for any $z\in V(T)$, and
  so
  \begin{align*}
    G(\sigma x,\sigma y) &= \langle M^{-1}\delta_{\sigma x},
                                           \delta_{\sigma y}\rangle
     = \langle M^{-1}\sigma^*\delta_x,\sigma^*\delta_y\rangle \\
    &= \langle (\sigma^*)^{-1} M^{-1}\sigma^*\delta_x, \delta_y\rangle
     = \langle M^{-1}\delta_x,\delta_y\rangle = G(x,y).
   \qedhere
   \end{align*}
\end{proof}

We introduce some further notions in order to derive additional properties
of the Green function. For a directed edge $e=(u,v)$ of the covering tree,
consider the subtree $T_e$ consisting of $v$'s connected component in
$T-e$, together with $e$ itself. Thus $T_e$ contains exactly those vertices
contained in a non-backtracking path from $u$ that begins with $e$. $T_e$
is considered to be rooted at $u$, which is of degree 1 in it. For any
vertex $x$, the unique neighbour closer to the root referred to as $x$'s
parent; the other neighbours are $x$'s children.

\begin{defn}
  Let $T_e$ be a tree as above, $e=(u,v)$. The {\em relative kernel of $M$
    with respect to $e$}, denoted $\relker{e}$, is the subspace of
  $l^2(V(T))$ consisting of functions $f$ supported on $T_e$ for which $Mf$
  is zero on $T_e\setminus\{u\}$. The {\em relative null-kernel}, denoted
  $\relnullker{e}$, is the subspace of the relative kernel consisting of
  those functions, $f$, for which $f(u)=0$.
\end{defn}

It is clear that either $\relker{e}=\relnullker{e}$ or else
$\relker{e}/\relnullker{e}$ is one-dimensional.

\begin{lemma}\label{L:one_dim}
  Let $M$ be invertible and let $e=(u,v)$.
  \begin{enumerate} \romenum
  \item If $G(u,u)\ne 0$, then $\dim(\relker{e}) = 1$.
  \item If $G(u,u)=G(u,v)=0$, then $\dim(\relker{e}) = 1$.
  \item If $G(u,u)=0$ and $G(u,v)\ne 0$, then $\dim(\relnullker{e}) = 1.$
  \end{enumerate}
\end{lemma}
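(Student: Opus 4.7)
The plan is to treat $\relker{e}$ and $\relnullker{e}$ by combining (a) an explicit candidate built from restrictions of the Green function to $V(T_e)$ with (b) an extend-by-zero uniqueness argument. The natural candidate is
\[
f_0(y) \;:=\; G(u,y)\,\mathbf{1}_{V(T_e)}(y);
\]
since every vertex $y\in V(T_e)\setminus\{u\}$ has all of its neighbours in $T_e$ (the only edge of $T$ connecting $V(T_e)$ to its complement is $e$ itself), locality of $M$ gives $(Mf_0)(y)=(MG(u,\cdot))(y)=0$ for all such $y$, so $f_0\in\relker{e}$ with $f_0(u)=G(u,u)$ and $f_0(v)=G(u,v)$.

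For uniqueness, given $h\in\relker{e}$, extend $h$ by zero to all of $T$. A short computation shows that $Mh$ is supported on $\{u,v_1,\dots,v_{d-1}\}$, where $v_1,\dots,v_{d-1}$ are the neighbours of $u$ lying in $V(T)\setminus V(T_e)$, and explicitly $(Mh)(u)=M_{uu}h(u)+M_{uv}h(v)$ and $(Mh)(v_i)=M_{uv_i}h(u)$. Applying $M^{-1}$ gives the representation
\[
h \;=\; \bigl(M_{uu}h(u)+M_{uv}h(v)\bigr)\,G(u,\cdot)\;+\;h(u)\sum_{i\ge 1}M_{uv_i}\,G(v_i,\cdot),
\]
and evaluating at $u$, using $(MG(u,\cdot))(u)=1$ and the symmetry of $G$, produces the scalar identity $h(u)\,M_{uv}\,G(u,v)=0$ whenever $G(u,u)=0$. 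Case~(i) follows at once: with $G(u,u)\ne 0$, if $h\in\relnullker{e}$ then the representation reduces to $h=M_{uv}h(v)G(u,\cdot)$, and evaluating at $u$ forces $h(v)=0$, hence $h\equiv 0$; together with $f_0(u)\ne 0$ this yields $\dim\relker{e}=1$. Case~(iii) follows analogously: the scalar identity forces $h(u)=0$ for every $h\in\relker{e}$, so $\relker{e}=\relnullker{e}$, and $f_0\in\relnullker{e}$ with $f_0(v)=G(u,v)\ne 0$, together with the injectivity of $h\mapsto h(v)$ on $\relnullker{e}$, give $\dim\relnullker{e}=1$.

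Case~(ii) is the main obstacle, because here $f_0$ vanishes identically. To see this, let $\phi$ be $G(u,\cdot)$ restricted to $(V(T)\setminus V(T_e))\cup\{u\}$ and extended by zero; a direct computation -- in which the vanishing of both $G(u,u)$ and $G(u,v)$ is used to cancel the boundary contributions to $(M\phi)(v_i)$ and $(M\phi)(u)$ -- shows $M\phi=\delta_u$, and invertibility of $M$ forces $\phi=G(u,\cdot)$, so $G(u,\cdot)$ is supported entirely off $V(T_e)$. The replacement candidates are $h_i(y):=G(v_i,y)\,\mathbf{1}_{V(T_e)}(y)$ for $i\ge 1$; each lies in $\relker{e}$ by the same locality argument applied to $G(v_i,\cdot)$, and some $h_i(u)=G(u,v_i)$ is nonzero since $\sum_{i\ge 1}M_{uv_i}G(u,v_i)=1-M_{uv}G(u,v)=1$. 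Uniqueness is then immediate: any $h\in\relnullker{e}$ still satisfies $h=M_{uv}h(v)G(u,\cdot)$, which is supported off $V(T_e)$, whereas $h$ itself must be supported in $V(T_e)$, forcing $h\equiv 0$. Hence $\relnullker{e}=0$, the map $h\mapsto h(u)$ on $\relker{e}$ is injective with image a line, and $\dim\relker{e}=1$.
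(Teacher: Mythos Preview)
Your proof is correct. The existence half (restricting suitable Green columns to $T_e$) matches the paper, but the uniqueness half runs along a different track. The paper takes $g\in\relker{e}$ and \emph{grafts} it onto a multiple of a Green column outside $T_e$, matching the two pieces at $u$; the grafted function then has $M$-image supported on one or two vertices, so it is a combination of one or two Green columns, and evaluating at the boundary pins down the coefficients. You instead extend $h$ by zero, compute $Mh$ explicitly on $\{u,v_1,\dots,v_{d-1}\}$, invert, and evaluate at $u$ using $(MG(u,\cdot))(u)=1$ to obtain the single relation
\[
h(v)\,G(u,u)\;=\;h(u)\,G(u,v),
\]
which dispatches cases~(i) and~(iii) at once; case~(ii) is then handled by your support computation for $G(u,\cdot)$ and the replacement candidates $G(v_i,\cdot)\big|_{T_e}$. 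The two mechanisms are dual --- your zero-extension leaves all of the boundary mass of $Mh$ on the $v_i$, whereas grafting cancels it --- but your version has the pleasant feature that one scalar identity governs every $h\in\relker{e}$. The paper's route, in exchange, never names the $v_i$ individually and in the $G(u,u)=0$ case isolates a single distinguished neighbour $x$ with $G(u,x)\ne0$ (equivalently, shows $G(u,\cdot)$ is supported on exactly one branch), a structural fact that is reused in the proof of Corollary~\ref{C:proportional}.
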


This lemma shows that $\relker{e}$ is one-dimensional or two-dimensional,
with the latter only possible in the case where $G(u,u)=0$ and $G(u,v)\ne
0$. We do not know of any operator where $\relker{e}$ is two-dimensional,
or whether such an example exists. There is a class of $M$'s where there
are edges for which $G(u,u)=0$ and $G(u,v)\ne 0$ (see
Section~\ref{se:steger}).

\begin{proof}
  Assume first $G(u,u)\ne 0$. Consider the function
  \[
  f(w) = \begin{cases} G(u,w) & w\in T_e \\ 0 & w\notin T_e\end{cases}.
  \]
  $Mf$ differs from $MG(x,\cdot)=\delta_u$ only at $u$ and outside $T_e$,
  and therefore $f\in\relker{e}$. Since $f(u)\ne 0$, the dimension of
  $\relker{e}$ is at least 1. To show that the dimension is not larger than
  one, suppose that $g\in\relker{e}$, and define $\tilde g$ via
  \[
  \tilde g(w)= \begin{cases} G(u,u)g(w) & \text{if $w\in V(T_e)$,} \\
                             G(u,w)g(u) & \text{otherwise.}
  \end{cases}
  \]
  Thus $\tilde g$ is proportional to $g$ on $T_e$ and to $G(u,\cdot)$
  outside. The two definitions coincide at $u$. It follows that $M \tilde
  g$ is supported on $u$, and so $\tilde g$ is proportional to $G(u,\cdot)$
  and hence $g$ is proportional to $f$.

  \medskip

  Assume from here on that $G(u,u)=0$. Consider the restriction of the
  $G(u,\cdot)$ to the subtrees emerging from $u$. Since the Green function
  is unique, $G(u,\cdot)$ must vanish on all but one of these trees
  (otherwise the restriction of $G(u,\cdot)$ to each subtree gives another
  function with $Mf=\delta_u$). In particular, there is a unique neighbour
  $x$ of $u$ with $G(x,u)\ne0$.

  If $G(u,v)=0$, then the we repeat the argument above with $f$ the
  restriction to $T_e$ of $G(x,\cdot)$. As above, $f\in \ker_e$. If there
  is another $g\in\ker_e$, then let
  \[
  \tilde g(w)= \begin{cases} G(x,u)g(w) & \text{if $w\in V(T_e)$,} \\
                             G(x,w)g(u) & \text{otherwise.}
  \end{cases}
  \]
  This time, $M \tilde g$ is supported on $\{x,u\}$, and so $\tilde g =
  c_1G(x,\cdot) + c_2 G(u,\cdot)$. Applying this at $x$ and $u$ gives
  \begin{align*}
    G(x,x)g(u) &= c_1G(x,x) + c_2 G(u,x),  &
    G(x,u)g(u) &= c_1G(x,u) + c_2 G(u,u)
  \end{align*}
  and solving for $c_1,c_2$ shows $c_2=0$ (and $c_1=g(u)$). Thus $g$ is
  proportional to $f$, and $\dim(\ker_e)=1$.

  Finally, if $x=v$, then $G(u,\cdot)$ vanishes outside $T_e$, and so
  $f=G(u,\cdot) \in \relnullker{e}$ implying the dimension is at least one.
  Conversely, for any $g\in\relnullker{e}$ the function $M g$ is supported
  on $u$ and hence must be proportional to $f$, implying the dimension is
  exactly one.
\end{proof}

\begin{coro}\label{C:proportional}
  If $e=(u,v)$ and $x,y$ are not in $V(T_e)\setminus \{u\}$, then the
  restrictions of $G(x,\,\cdot\,)$ and $G(y,\,\cdot\,)$ to $T_e$ are
  proportional.
\end{coro}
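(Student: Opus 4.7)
The plan is to lift each restriction into the relative kernel $\relker{e}$ and then invoke the dimension bounds of \lemref{one_dim}. Define $f_x$ to be the function on $V(T)$ equal to $G(x,w)$ for $w\in V(T_e)$ and $0$ elsewhere, and similarly $f_y$. First I would verify that $f_x\in\relker{e}$: for any $w\in V(T_e)\setminus\{u\}$, every neighbour of $w$ in $T$ also lies in $V(T_e)$ (the only edge connecting $V(T_e)$ to its complement is $e$, and its $T_e$-endpoint is $u$), hence
\[
(Mf_x)(w)=(MG(x,\cdot))(w)=\delta_x(w)=0
\]
by the hypothesis that $x\notin V(T_e)\setminus\{u\}$, so $x\ne w$. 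The same holds for $f_y$.

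In cases~(i) and~(ii) of \lemref{one_dim} we have $\dim\relker{e}=1$, so $f_x,f_y$ lie in a one-dimensional space and are automatically proportional. The interesting case is~(iii), where $G(u,u)=0$ and $G(u,v)\ne0$; here \lemref{one_dim} only gives $\dim\relnullker{e}=1$ and leaves $\dim\relker{e}$ possibly equal to $2$, so the direct argument fails. To handle this case I would extract the following support statement from the proof of \lemref{one_dim}(iii): \emph{$G(u,\cdot)$ is supported inside $V(T_e)$.} The grafting idea does the work: since $G(u,u)=0$, restricting $G(u,\cdot)$ to any subtree branching off $u$ produces a new candidate solution of $Mf=\delta_u$ on its own branch, so uniqueness of the Green function forces $G(u,\cdot)$ to be supported on a single branch, and the assumption $G(u,v)\ne0$ identifies that branch as the one contained in $V(T_e)$.

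Combining this support statement with the symmetry $G(x,u)=G(u,x)$ established at the start of this section, we obtain $G(x,u)=0$ for every $x\notin V(T_e)\setminus\{u\}$, so $f_x,f_y\in\relnullker{e}$. Since $\relnullker{e}$ is one-dimensional in case~(iii), $f_x$ and $f_y$ are again proportional, completing the proof.

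The principal obstacle is case~(iii): \lemref{one_dim} controls $\relnullker{e}$ rather than $\relker{e}$, so one must first argue that both $f_x$ and $f_y$ actually lie in the smaller space $\relnullker{e}$, and this relies on the auxiliary support statement for $G(u,\cdot)$. The remaining steps are routine bookkeeping with the local structure of $T_e$ near its root $u$.
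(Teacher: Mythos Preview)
Your proof is correct and follows essentially the same route as the paper: split into the three cases of \lemref{one_dim}, use $\dim\relker{e}=1$ directly in cases~(i) and~(ii), and in case~(iii) invoke the support statement for $G(u,\cdot)$ together with the symmetry $G(x,u)=G(u,x)$ to push $f_x,f_y$ into the one-dimensional $\relnullker{e}$. Your write-up is a bit more explicit about why the restrictions lie in $\relker{e}$, but the argument is the same.
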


\begin{proof}
  The cases $G(u,u)\ne 0$ and $G(u,u)=G(u,v)=0$ follow from the
  one-dimensionality of $\relker{e}$. The remaining case $G(u,u)=0$ and
  $G(u,v)\ne 0$ follows from the fact mentioned in the above proof that
  $G(u,\,\cdot\,)$ is supported on $T_e$, so $G(u,x)=G(u,y)=0$. Hence
  $G(x,u)=G(u,x)=0$. Thus the restrictions to $T_e$ of $G(x,\cdot)$ and
  $G(y,\cdot)$ are both in the one-dimensional $\relnullker{e}$.
\end{proof}

\section{Green Functions with finite periodic ratios}

Let $\pi\from T\to H$ and $M$ be as in Theorem~\ref{T:main}.
\begin{defn}
  A {\em {\finiteRS} for $M$} is a {\generalizedRS}, $r$, whose values,
$r_e$, are never $0$ or $\infty$; in other words, \eqref{eq:recursion}
always holds.
\end{defn}

As a step toward proving Theorem~\ref{T:main}, we first discuss the
simpler case where all ratios are finite.

\begin{thm}\label{T:main_simple}
  The following are equivalent:
  \begin{enumerate}
  \item $M$ is invertible and $G(u,u) \ne 0$ for all $u$,
  \item there is a {\finiteRS} for $M$ with $\alpha(r)<1$.
  \end{enumerate}
\end{thm}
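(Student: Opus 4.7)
The plan is to prove the two implications separately. For $(1) \Rightarrow (2)$ I extract the ratio system directly from the Green function; for $(2) \Rightarrow (1)$ I reconstruct a candidate Green function from the ratios and verify that $\alpha(r) < 1$ makes it define a bounded inverse of $M$.

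For the forward direction, assume $M$ is invertible and $G(u,u) \neq 0$ for every $u$. For a lift $(u,v) \in E(T)$ of $e \in E(H)$ set $r_e := G(u,v)/G(u,u)$. To see that $r_e$ is well-defined on $E(H)$, combine Proposition~\ref{pr:symmetry} (any two lifts of $e$ are related by a deck transformation of $\pi$, which commutes with $M$ and hence preserves $G$) with \corref{proportional} (replacing $u$ by any $w$ with $u \in [w,v]$ leaves the ratio unchanged). The recursion \eqref{eq:recursion} then follows by evaluating $(MG(u,\cdot))(v) = 0$ at a neighbour $v$ of $u$ in $T$, expanding $M$, and dividing through by $G(u,v) = r_e G(u,u)$. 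For the decay estimate, boundedness of $M^{-1}$ gives $G(u,\cdot) \in l^2(V(T))$; organizing $\sum_v |G(u,v)|^2$ by the unique non-backtracking walks out of $u$ expresses the sum as $|G(u,u)|^2 \sum_k \langle \mathbf{1}, R^k v_0\rangle$ for an appropriate starting vector $v_0$, and convergence forces the Perron eigenvalue $\alpha(r) = \rho(R) < 1$. Along the way I still need to rule out $r_e = 0$, which I expect to handle by applying \lemref{one_dim}(i) to the relevant edge and using the recursion to derive a contradiction.

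For the converse, given a {\finiteRS} $r$ with $\alpha(r) < 1$, I build a candidate Green function by setting $G(u,u) := 1/\bigl(m_{uu} + \sum_{f \text{ out of } \pi(u)} m_f r_f\bigr)$ and extending to $v$ at distance $k$ from $u$ by $G(u,v) := G(u,u)\prod_{i=1}^k r_{e_i}$, where $(e_1,\dots,e_k)$ is the unique non-backtracking path from $u$ to $v$. The recursion \eqref{eq:recursion} forces $(MG(u,\cdot))(v) = 0$ for every $v \neq u$, while the normalization makes $(MG(u,\cdot))(u) = 1$, so $G$ is a formal Green function; the $l^2$-norm of each row is finite because $\sum_k \sum_{d(u,v)=k}|G(u,v)|^2$ is a geometric sum controlled by the powers of $\alpha$. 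Define $N$ on $l^2(V(T))$ by $(Nf)(u) := \sum_v G(u,v)f(v)$ and aim to show $N = M^{-1}$ as bounded operators. Non-vanishing of the denominator in $G(u,u)$ is handled a posteriori: were it to vanish, the same formal construction (with $G(u,u)$ replaced by an arbitrary nonzero constant) would produce a nonzero $l^2$ function in $\ker M$, contradicting the invertibility established below.

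The main obstacle is boundedness of $N$ on $l^2(V(T))$. Row-by-row $l^2$ control is not enough because the number of vertices at distance $k$ from $u$ grows exponentially, so a genuinely global bound is needed. Two natural routes are (a) a Schur test with a weight adapted to the exponential decay of $|G(u,v)|$ in $\rho(u,v)$, where the strict gap $\alpha(r) < 1$ provides exactly the slack needed to close the estimate, and (b) an invariance argument using that $G(u,v)$ depends only on the deck-transformation orbit of $(u,v)$, so that $N$ can be viewed as a convolution-type operator whose norm is read off from the spectral data of $R$. Once $N$ is bounded, the identities $MN = NM = I$ follow on finitely supported vectors from $MG(u,\cdot) = \delta_u$ together with the symmetry $G(u,v) = G(v,u)$, and then extend to all of $l^2(V(T))$ by continuity.
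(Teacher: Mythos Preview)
Your forward direction $(1)\Rightarrow(2)$ matches the paper's argument closely and is fine.

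In the converse $(2)\Rightarrow(1)$ there is a genuine gap: your ``a posteriori'' treatment of the denominator $m_{uu}+\sum_f m_f r_f$ is circular. You say that if it vanished you would obtain a nonzero $l^2$ element of $\ker M$, ``contradicting the invertibility established below.'' But the invertibility below is established via the operator $N$, and $N$ is only defined once you already know all denominators are nonzero. If some denominator vanishes you never get to construct $N$, so there is nothing to contradict. The paper breaks this circle with a direct argument (\clmref{vanish_at_root}): one checks algebraically that $(Mf_u)(u)=0$ forces $r_e r_{e^{-1}}=1$ for every edge $e$; lifting a non-backtracking cycle of $H$ then produces an infinite path along which $|f_u|\ge 1$, contradicting $f_u\in l^2$ (which you do have, from $\alpha(r)<1$). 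You need some argument of this sort before you can normalize.

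Two smaller points. First, you invoke the symmetry $G(u,v)=G(v,u)$ of your \emph{candidate} kernel to obtain $NM=I$, but you have not proved this symmetry; the paper explicitly notes that it avoids proving it directly, instead showing $M$ is surjective (\clmref{conv}) and separately that $\ker M=0$ (\clmref{kernel}, via the same surjectivity for $\overline{M}$), so that symmetry of $\tG$ follows only after $\tG$ is identified with the true Green function. Second, your suggestion that a Schur test will give boundedness of $N$ is optimistic: with constant weight the row sums $\sum_v |G(u,v)|$ need not converge (the condition $\alpha(r)<1$ controls $\sum |G(u,v)|^2$, not $\sum |G(u,v)|$), and it is not clear what weight would work uniformly over a non-homogeneous tree. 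The paper's estimate (\clmref{conv}) is genuinely tree-geometric: one writes $\|\theta\|^2\le\sum_{u,w}|h(u)h(w)|H(u,w)$, splits each product $\tG(u,v)\tG(w,v)$ at the branching vertex $x$ of the tripod spanned by $u,v,w$ via the multiplicative bound \eqref{eq:split}, and then sums carefully using a rooted structure and polynomial weights $\rho'(\cdot,\cdot)$ to close the estimate. This is the technical heart of the implication and is not bypassed by either of your proposed routes as stated.
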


We start the proof of \thmref{main_simple} with the assumption that $M$ is
invertible and $G(u,u)\ne 0$ for all $u$, and prove that there is a ratio
system for $M$ with $\alpha(r)<1$. Indeed, for any edge $e_0\in H$, take
some edge $e=(u,v)$ of type $e_0$ and define
\[
r_{e_0} = \frac{G(u,v)}{G(u,u)}.
\]
For a path $\vec p = \{e_i\}_{i=0}^n$ in $H$, it will be convenient to
denote $r(\vec p) = \prod r_{e_i}$, i.e., the product of $r_e$ over $\vec
p$, with repeated edges taken into account.

For an edge $e$, we consider the function $f$ on $T_e$ defined by $f =
\frac{G(u,\cdot)}{G(u,u)}$. As noted above, $f\in\relker{e}$. The following
relates $f$ to the {\finiteRS}.

\begin{claim}\label{C:prod_r}
  Consider a non-backtracking path $\vec p$ in $H$ beginning with $e_0$.
  Lift the path to $\tH$, starting at $u$ and let $y$ be the terminal
  vertex. Then $f(y)=r(\vec p)$.
\end{claim}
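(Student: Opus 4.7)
The plan is induction on the length of $\vec p$. Let $u=u_0,u_1,\ldots,u_{n+1}=y$ be the vertices of the lifted path, with each $(u_i,u_{i+1})$ of type $e_i$. Writing $f(u_{n+1})=G(u,u_{n+1})/G(u,u)$ and telescoping, it suffices to establish
\[
\frac{G(u,u_{i+1})}{G(u,u_i)}=r_{e_i}\qquad(0\le i\le n).
\]
The case $i=0$ follows directly from the definition of $r_{e_0}$ together with the hypothesis $G(u,u)\ne 0$. Because all ratios in a \finiteRS\ are finite and nonzero, the induction will propagate $G(u,u_i)\ne 0$ at every stage, so each quotient is well-defined.

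For the inductive step with $i\ge 1$, I would apply \corref{proportional} to the directed edge $e'=(u_i,u_{i+1})$ of $\tH$. Since $\vec p$ is non-backtracking and $\tH$ is a tree, the unique path in $\tH$ from $u_0$ to any vertex of $V(T_{e'})\setminus\{u_i\}$ must traverse $e'$; hence $u_0\notin V(T_{e'})$, while $u_i$ is the root of $T_{e'}$. Thus neither $u_0$ nor $u_i$ lies in $V(T_{e'})\setminus\{u_i\}$, and the corollary produces a scalar $c$ with $G(u_0,w)=c\,G(u_i,w)$ for every $w\in V(T_{e'})$. Evaluating at $u_i$ and $u_{i+1}$, both of which lie in $V(T_{e'})$, and using $G(u_i,u_i)\ne 0$, I obtain
\[
\frac{G(u,u_{i+1})}{G(u,u_i)}=\frac{G(u_i,u_{i+1})}{G(u_i,u_i)}.
\]

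To identify the right-hand side with $r_{e_i}$, I would invoke Proposition~\ref{pr:symmetry}: because $M$ is a pullback, every deck transformation of $\pi\from\tH\to H$ commutes with $M$ and therefore preserves $G$. A deck transformation $\sigma$ sending $(u_i,u_{i+1})$ to the representative edge used to define $r_{e_i}$ then gives $G(u_i,u_{i+1})/G(u_i,u_i)=r_{e_i}$. Combining this with the inductive identity above and multiplying over $i$ yields $f(y)=\prod_{i=0}^n r_{e_i}=r(\vec p)$.

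The one place where care is really needed is the verification that neither $u_0$ nor $u_i$ lies in $V(T_{e'})\setminus\{u_i\}$, which is required to invoke \corref{proportional}. This is an elementary tree-combinatorial fact resting on the non-backtracking property of $\vec p$, but must be stated explicitly. Everything else amounts to combining \corref{proportional} with the deck-automorphism invariance of $G$ furnished by Proposition~\ref{pr:symmetry} and the inductive non-vanishing of $G(u,u_i)$.
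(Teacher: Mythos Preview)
Your proof is correct and follows essentially the same route as the paper: induct on the path length and, at each step, use the one-dimensionality of the relative kernel to identify the local ratio with $r_{e_i}$. The paper invokes \lemref{one_dim} directly (the restriction of $f$ to $T_{e'}$ lies in $\relker{e'}$, hence is proportional to $G(x,\cdot)$), whereas you invoke its consequence \corref{proportional} together with \propref{symmetry}; these are equivalent.

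One small logical slip: you justify the non-vanishing of $G(u,u_i)$ by appealing to the ratios in a \finiteRS\ being nonzero, but at this point in the argument $r$ has merely been \emph{defined} via $r_{e_0}=G(u,v)/G(u,u)$ and has not yet been shown to be a \finiteRS---that verification comes after \clmref{prod_r}. The paper sidesteps this by phrasing the inductive step multiplicatively, ``$f(y)=r_{e_k}f(x)$,'' rather than via quotients; that identity follows from the proportionality even when $f(x)=0$, so no division is needed. Rewriting your telescoping as $G(u,u_{i+1})=r_{e_i}\,G(u,u_i)$ fixes the issue without changing anything else.
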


\begin{proof}
  We induct on the path's length. If $n=0$ the claim states $f(u)=1$, which
  is true. If $n=1$, the claim states $f(v)=r_e$ which is also true by the
  choice of $r$. For the inductive step, let $e'=(x,y)$ be an edge of type
  $e_k$ in $T_e$, with $x$ closer to the root, then it suffices to show
  that $f(y) = r_{e_k} f(x)$ (this is why we call $r$ a {\finiteRS}).

  Since the restriction of $f$ to $T_{e'}$ is in $\relker{e'}$, by
  \lemref{one_dim} it is proportional to $G(x,\cdot)$, and so
  \[
  \frac{f(y)}{f(x)} = \frac{G(x,y)}{G(x,x)} = r_{e_k}. \qedhere
  \]
\end{proof}

To check that $r$ is indeed a {\finiteRS}, note that $(Mf)(v)=0$. However,
\[
(Mf)(v) = m_{vv} r_e + m_{e^{-1}} + \sum_{e\to e'} m_{e'} r_e r_{e'},
\]
which is just \eqref{eq:recursion}.

It remains to show that $\alpha(r)<1$. Consider $S_n = \sum_{\rho(x,u)=n}
|f(x)|^2$, i.e., the contribution to $\|f(x)\|^2$ from all $x$ at level $n$
of $T_e$. The matrix $R$ is constructed so that $S_n$ is the square of the
$l^2$-norm of the row corresponding to $e_0$ in $R^n$ (this is proved by
induction using \clmref{prod_r}).

By Perron-Frobenius theory, there is an $e_0$ and $c>0$ such that $S_n \ge
c\alpha(r)^n$ for $n$ large (see e.g., \cite[Theorem 8.1.26]{HJ85}). Since
$\sum_n S_n = \|f_n\|^2$ is finite, it must be that $\alpha(r)<1$.

\medskip

We now prove the second direction of \thmref{main_simple}. Given a ratio
system $r$ with $\alpha(r)<1$, we will first construct the Green function
for $M$, and then use it to construct the inverse of $M$. For a vertex
$u\in T$ we define $f_u(v) = r(\vec p(u,v))$, where $\vec p(u,v)$ is (the
projection to $H$ of) the unique path from $u$ to $v$. By convention,
$f_u(u)=1$. Note that $f_u\in l^2$, since for all $\eps>0$ we have $S_n =
\sum_{\rho(x,u)=n} |f(x)|^2 = (\alpha(r)+\eps)^n$ for large $n$, which
converges if $\eps< 1-\alpha(r)$.

\begin{claim}\label{C:vanish_at_root}
  We have $(M f_u)(v)=0$ whenever $v\ne u$. If $e$ is (the type of) an edge
  incident to $u$ then $(Mf_u)(u)=0$ iff $r_e r_{e^{-1}}=1$. In particular,
  either $r_e r_{e^{-1}}=1$ for all edges, or for no edges.
\end{claim}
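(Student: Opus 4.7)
The plan is to compute $(Mf_u)(v)$ directly from the definition $f_u(v) = r(\vec p(u,v))$, treating the two cases $v \ne u$ and $v = u$ separately. For $v \ne u$, the key identity \eqref{eq:recursion} in the definition of a finite ratio system will essentially drop out; the wrinkle in the case $v=u$ is that there is no ``parent edge'' at $u$, and that is where the condition $r_e r_{e^{-1}} = 1$ enters.

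First, fix $v \ne u$ and let $e = (x,v)$ be the last edge of the path from $u$ to $v$. The neighbours of $v$ split into the parent $x$ and the children $w$ reached by edges $e'$ with $e \to e'$. From the multiplicative definition of $f_u$, we have $f_u(x) = f_u(v)/r_{\pi(e)}$ and $f_u(w) = f_u(v)\, r_{\pi(e')}$. Since $M$ is local, pulled back from $H$, and $m_{vx} = m_{\pi(e^{-1})}$, I get
\begin{equation*}
 (Mf_u)(v) = f_u(v) \left[ m_{vv} + \frac{m_{e^{-1}}}{r_{\pi(e)}} + \sum_{e \to e'} m_{e'}\, r_{\pi(e')} \right],
\end{equation*}
and the bracketed expression is exactly \eqref{eq:recursion} for the type of $e$, so $(Mf_u)(v) = 0$.

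For $v = u$, pick any edge $e = (u,w_0)$ at $u$; the other edges out of $u$ are precisely those $e'$ with $e^{-1} \to e'$. Using $f_u(u) = 1$ and $f_u(w) = r_{\pi((u,w))}$, direct computation gives
\begin{equation*}
 (Mf_u)(u) = m_{uu} + m_{e} r_{\pi(e)} + \sum_{e^{-1}\to e'} m_{e'} r_{\pi(e')}.
\end{equation*}
On the other hand, \eqref{eq:recursion} applied to the directed edge $e^{-1}$ (with the roles of $e$ and $e^{-1}$ swapped) yields
\begin{equation*}
 0 = m_{uu} + \frac{m_{e}}{r_{\pi(e^{-1})}} + \sum_{e^{-1} \to e'} m_{e'} r_{\pi(e')}.
\end{equation*}
Subtracting, I obtain $(Mf_u)(u) = m_e \bigl(r_{\pi(e)} - r_{\pi(e^{-1})}^{-1}\bigr)$, which (since $m_e \ne 0$ by connectedness and $r_{\pi(e^{-1})} \ne 0$ by finiteness of the ratio system) vanishes iff $r_{\pi(e)} r_{\pi(e^{-1})} = 1$.

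The ``all or none'' statement is then essentially free: the left-hand side $(Mf_u)(u)$ of the displayed identity does not depend on the auxiliary choice of $e$, so the condition $r_e r_{e^{-1}} = 1$ either holds for every edge incident to $u$ or for none. Applying the same dichotomy at each vertex and invoking connectedness of $H$ (any two edges of $H$ share a vertex with some intermediate edge along a walk) propagates the property globally. The main, and really only, obstacle is bookkeeping: getting the role of parent/child edges, the directions $e$ versus $e^{-1}$, and the types $\pi(e)$ aligned so that \eqref{eq:recursion} applies on the nose; there is no deeper difficulty.
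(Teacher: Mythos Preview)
Your proof is correct and follows essentially the same approach as the paper: for $v\ne u$ you unwind $(Mf_u)(v)$ to a multiple of the recursion \eqref{eq:recursion}, and for $v=u$ you compare $(Mf_u)(u)$ with \eqref{eq:recursion} applied to $e^{-1}$, obtaining $(Mf_u)(u)=m_e\bigl(r_e-1/r_{e^{-1}}\bigr)$, which is exactly the paper's identity $r_{e^{-1}}(Mf_u)(u)=m_e(r_e r_{e^{-1}}-1)$ before clearing the denominator. The ``all or none'' argument via independence from the chosen $e$ and connectedness of $H$ also matches the paper's.
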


\begin{proof}
  When $v\ne u$ the claim is immediate from \eqref{eq:recursion}.

  Let $e$ be an edges originating at $u$. We have $(Mf_u)(u) = m_{uu} +
  \sum_{e'=(u,v)} m_{e'} r_{e'}$, where the sum is over all edges
  originating at $u$. Multiplying this by $r_{e^{-1}}$ and subtracting
  \eqref{eq:recursion} for $e^{-1}$ gives
  \[
  r_{e^{-1}} (Mf_u)(u) = m_e(r_e r_{e^{-1}} - 1).
  \]

  If $r_e r_{e^{-1}}=1$, then $r_{e^{-1}}\ne 0$ and therefor $(Mf_u)(u)$
  must be 0. If $r_e r_{e^{-1}}\ne 1$, then (since we assume $m_e\ne 0$)
  $(Mf_u)(u)$ cannot be 0.

  It follows that if $e,e'$ are two edges incident on $u$, then $r_e
  r_{e^{-1}}$ and $r_{e'} r_{e'^{-1}}$ are either both 1 or both different
  from 1. Since $H$ is connected, the same holds for any two edges.
\end{proof}

We now show that $r_e r_{e^{-1}}\ne 1$ for all $e$. Assume the contrary.
Take some non-backtracking cycle $\vec p\in H$, then also $r(\vec p) r(\vec
p^{-1})=1$, and so at least one of the two has absolute value at least one.
Without loss of generality suppose $|r(\vec p)|\ge 1$. However, in this
case $f_u$ cannot be in $l^2$, since $|f_u|\ge 1$ at all endpoints of the
path that is the lifting of $\vec p$ repeated any number of times.

Thus we find that $(Mf_u)(u)\ne 0$ for all $u$. Consequently, there is a
function $g_u$ proportional to $f_u$ such that $(M g_u)=\delta_u$. Let us
define $\tG(u,v)=g_u(v)$. Our next goal is to prove that $\tG$ is the Green
function for $M$. Let us establish some of its properties \footnote{Note
  that since $M$ is a symmetric operator, we expect $\tG$ to be symmetric
  as well. While this fact has a direct proof, the following lemma is
  easier and suffices for our purposes. The symmetry of $\tG$ will
  eventually follow by proving that $\tG$ is the Green function of $M$.}.

First, note that there are constants $K_1,K_2$ such that for any $x,z$ and
any $y$ on the path $[x,z]$
\begin{equation}\label{eq:split}
  K_1 \left| \tG(x,z) \right|
  \le \left| \tG(x,y)\tG(y,z) \right|
  \le K_2 \left| \tG(x,z) \right|.
\end{equation}
This follows from the definition of $\tG$ in terms of the {\finiteRS}, $r$,
and the fact that $\tG(y,y)$ takes on only finitely many values as it
depends only on $y$'s type.

The second property is that $\tG(x,\cdot)$ and $\tG(\cdot,x)$ are in $l^2$.
Moreover, multiplying a row or a column of $\tG$ by a ``slowly increasing
function'' (like that in the lemma that follows) still yields an $l^2$
function. Recall that we denote the distance between vertices $x,y$ in $T$
by $\rho(x,y)$.

\begin{claim}\label{C:columns-in-l2}
  For any vertex $u$, let $h_1 = \rho(u,\cdot) \tG(u,\cdot)$ and $h_2 =
  \rho(u,\cdot) \tG(\cdot,u)$. There is some constant $c$ independent of
  $u$ such that $\|h_1\|_2<c$ and $\|h_2\|_2<c$.
\end{claim}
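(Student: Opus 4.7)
The plan is to reduce each of $\|h_1\|$ and $\|h_2\|$ to an iteration of a nonnegative $|E(H)|\times|E(H)|$ matrix, and then to leverage $\alpha(r)<1$ via Gelfand's formula. Recall that $\tG(u,v) = f_u(v)/c_u$, where $c_u:=(Mf_u)(u)\neq 0$ depends only on the type of $u$; since $H$ has finitely many types, $|c_u|$ is bounded above and bounded away from zero uniformly in $u$. Hence it suffices to bound $\sum_v \rho(u,v)^2 |f_u(v)|^2$ and $\sum_v \rho(u,v)^2 |f_v(u)|^2$ uniformly in $u$.

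For $h_1$, I would group vertices by distance and set $S_n(u):=\sum_{\rho(u,v)=n}|f_u(v)|^2$. Parametrizing paths from $u$ in $T$ by non-backtracking walks in $H$ starting at $\pi(u)$, and using $|f_u(v)|^2 = \prod_i |r_{\pi(e_i)}|^2$, one obtains $S_n(u) = \langle v_u, R^{n-1}\mathbf{1}\rangle$ for $n\ge 1$, where $v_u\in\C^{E(H)}$ has entries $|r_e|^2\mathbf{1}[\mathrm{tail}(e)=\pi(u)]$ and $\mathbf{1}$ is the all-ones vector. By Gelfand's formula, for any $\eps>0$ we have $\|R^n\|\le C_\eps(\alpha(r)+\eps)^n$; choosing $\eps<1-\alpha(r)$ and noting that $\|v_u\|$ takes only finitely many values as $u$ varies, we get $S_n(u)\le C\beta^n$ uniformly with $\beta<1$, so $\sum_n n^2 S_n(u)$ is bounded uniformly.

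For $h_2$, the analogous sum $S'_n(u):=\sum_{\rho(u,v)=n}|f_v(u)|^2$ features products $\prod_i |r_{\pi(e_i)^{-1}}|^2$, since the path from $v$ to $u$ traverses the reverse-oriented edges. Setting $R'_{e,f}:=|r_{f^{-1}}|^2\mathbf{1}[e\to f]$, the same identity $S'_n(u)=\langle v'_u,(R')^{n-1}\mathbf{1}\rangle$ holds, and the previous argument runs verbatim provided $\alpha(R')<1$.

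The main obstacle I anticipate is showing $\alpha(R')=\alpha(R)$, since the hypothesis only supplies $\alpha(R)<1$. My plan is to factor $R=KD$ and $R'=KD'$, where $K_{e,f}=\mathbf{1}[e\to f]$ and $D,D'$ are diagonal with $D_{ee}=|r_e|^2$, $D'_{ee}=|r_{e^{-1}}|^2$. Letting $I_\iota$ denote the permutation matrix for the involution $e\mapsto e^{-1}$, the key observation is that $e\to f$ iff $f^{-1}\to e^{-1}$, which gives $I_\iota K I_\iota = K^T$, while trivially $I_\iota D I_\iota = D'$. Combining yields $I_\iota R I_\iota = K^T D' = (D'K)^T$, so $D'K$ has the same spectrum as $R$; the standard fact that $KD'$ and $D'K$ share their nonzero spectrum then gives $\alpha(R')=\alpha(R)<1$, and the $h_2$ estimate follows as in the $h_1$ case.
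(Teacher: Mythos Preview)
Your argument is correct and, for $h_1$, identical to the paper's: group by distance, identify $S_n(u)$ with a row sum of $R^{n-1}$, and invoke $\alpha(r)<1$ via Gelfand.

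For $h_2$ the paper is terser. Instead of introducing a second matrix $R'$, it uses the path-reversal bijection directly: if $e_1\to\cdots\to e_n$ is the path from $u$ to $x$, then $e_n^{-1}\to\cdots\to e_1^{-1}$ is the path from $x$ to $u$, and $\prod_i|r_{e_i^{-1}}|^2=\prod_i|r_{f_i}|^2$ with $f_i:=e_{n+1-i}^{-1}$. This turns $S'_n$ into a \emph{column} sum of (a power of) the same matrix $R$, which has the same asymptotic rate $\alpha(r)^n$ as the row sums. Your conjugation identity $I_\iota R I_\iota=(D'K)^T$ is exactly this bijection written matricially, and the $AB$--$BA$ step then recovers that columns of $R$ and rows of $R'$ match; so the two arguments are the same idea, with the paper skipping the detour through $R'$. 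Your route has the minor advantage of making the spectral-radius equality $\alpha(R')=\alpha(R)$ explicit, which is a clean standalone fact.
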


\begin{proof}
  Since there are finitely many types for $u$, it suffices to consider a
  single $u$. As before, let $S_n=\sum_{\rho(x,u)=n} |\tG(u,x)|^2$, so that
  for any $\eps>0$ we have $S_n = (\alpha(r)+\eps)^n$ for large $n$. It
  follows that
  \[
  \|h_1\|^2 = \sum_n n^2 S_n < \infty.
  \]
  For $h_2$, consider $S_n' = \sum_{\rho(x,u)=n} |\tG(x,u)|^2$. This equals
  the $l^2$ norm of a column of $R^n$, and therefore has the same
  asymptotics as $S_n$.
\end{proof}

The next two claims imply that $M$ is invertible.

\begin{claim}\label{C:conv}
  For every $h\in l^2(T)$ there exists $\theta \in l^2(T)$ such that $M
  \theta = h$.
\end{claim}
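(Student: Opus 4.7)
Plan: The natural candidate is $\theta(v) := \sum_u \tG(u,v)\,h(u)$; I will show it is well-defined, lies in $l^2(T)$, and satisfies $M\theta = h$. Pointwise convergence follows from Cauchy--Schwarz and the uniform column bound from \clmref{columns-in-l2}: $|\theta(v)| \le \|\tG(\cdot,v)\|_2\|h\|_2 \le c\|h\|_2$, with $c$ independent of $v$ since $v$'s type ranges over the finite vertex set of $H$. Then $M\theta = h$ reduces to a finite algebraic identity,
\[
(M\theta)(v) = \sum_w M_{vw}\theta(w) = \sum_u h(u)\sum_w M_{vw}\tG(u,w) = \sum_u h(u)\bigl(M\tG(u,\cdot)\bigr)(v) = \sum_u h(u)\delta_u(v) = h(v),
\]
the interchange being legitimate because locality of $M$ renders the outer $w$-sum finite while each inner $u$-sum converges absolutely.

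The remaining task --- and what I expect to be the main obstacle --- is to show $\theta \in l^2(T)$ with $\|\theta\|_2 \le C\|h\|_2$. I would first treat $h$ of finite support: then $\theta = \sum_{u\in\mathrm{supp}\,h} h(u)\tG(u,\cdot)$ is a finite combination of $l^2$ vectors (by \clmref{columns-in-l2}), hence $\theta \in l^2$. A uniform bound $\|\theta\|_2 \le C\|h\|_2$ valid on such $h$ then extends to all $h \in l^2$ by density: truncations $h_N \to h$ in $l^2$ yield $\theta_N$ that form an $l^2$-Cauchy sequence, whose limit agrees pointwise with $\theta$, forcing $\theta \in l^2$.

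The hard point is the uniform bound. A direct weighted Cauchy--Schwarz
\[
|\theta(v)|^2 \le \Bigl(\sum_u w(u,v)^{-1}|\tG(u,v)|^2\Bigr)\Bigl(\sum_u w(u,v)|h(u)|^2\Bigr)
\]
summed over $v$ is the natural tool, but neither constant nor polynomial weights $w=(1+\rho(u,v))^{2p}$ can balance the tree's exponential growth against the exponential $l^2$-decay $S_n(v)\le C(\alpha(r)+\varepsilon)^n$ available via the Perron--Frobenius argument behind \clmref{columns-in-l2}. I therefore expect the proof must exploit the tree Green-function split identity $\tG(u,v) = \tG(u,y)\tG(y,v)/\tG(y,y)$ for $y$ on the geodesic $[u,v]$ --- valid with uniformly bounded denominator, since $\tG(y,y)=1/(Mf_y)(y)$ depends only on $y$'s type and was shown nonzero in \clmref{vanish_at_root} --- to factor the kernel along geodesics and reduce the $l^2$ boundedness to a Perron--Frobenius spectral estimate on the matrix $R$ of \eqref{eq:defineR}. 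The hypothesis $\alpha(r)<1$ then forces the resulting geometric sum to converge, yielding the desired operator bound.
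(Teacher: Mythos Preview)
Your setup is correct and matches the paper: define $\theta(v)=\sum_u \tG(u,v)h(u)$, use \clmref{columns-in-l2} and Cauchy--Schwarz for pointwise convergence, and verify $M\theta=h$ by the finite-sum interchange. The paper does exactly this.

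The gap is in the $l^2$ bound. You correctly isolate this as the hard step and correctly name the split identity \eqref{eq:split} as the key tool, but you stop at ``factor the kernel along geodesics and reduce to a Perron--Frobenius estimate.'' That description does not capture the actual mechanism, and in particular you never use the specific form of \clmref{columns-in-l2}: that claim bounds not just $\|\tG(u,\cdot)\|_2$ but $\|\rho(u,\cdot)\tG(u,\cdot)\|_2$. The distance weight $\rho(u,\cdot)$ is there precisely to make the $l^2$ bound go through, and your sketch gives no indication of how it enters.

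Here is what the paper actually does. Expand $\sum_v|\theta(v)|^2 \le \sum_{u,w}|h(u)h(w)|\,H(u,w)$ with $H(u,w)=\sum_v|\tG(u,v)\tG(w,v)|$. For each $v$, let $x$ be the branching point of $[v,u]$ and $[v,w]$; applying \eqref{eq:split} twice and summing over $v$ gives $H(u,w)\le c\sum_{x\in[u,w]}|\tG(u,x)\tG(w,x)|$. Now fix a root, let $z$ be the nearest common ancestor of $u,w$, and split once more through $z$. The crucial move is a weighted Cauchy--Schwarz on $|h(u)h(w)\tG(u,x)\tG(z,x)\tG(w,z)|$ with weight $\rho'(z,x)\rho'(x,u)/\rho'(z,w)$: this produces terms like $\sum_u \rho'(x,u)^2|\tG(u,x)|^2$ and $\sum_x \rho'(z,x)^2|\tG(z,x)|^2$, which are exactly the quantities controlled by \clmref{columns-in-l2}, and a leftover $\sum_z \rho'(z,w)^{-2}$ along the ray to the root, which is summable. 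Without the $\rho$-weighted bound, the sum over $x\in[u,w]$ (whose length is unbounded) cannot be controlled; a straight Perron--Frobenius decay estimate on $|\tG|$ alone is not enough.
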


\begin{claim}\label{C:kernel}
  The kernel of $M$ is trivial.
\end{claim}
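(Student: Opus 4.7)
The plan is to deduce triviality of $\ker M$ from the existence of the candidate Green function $\tG$ built above, via a bilinear-pairing argument that trades on the paper's notion of symmetry.

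First, I would introduce the un-conjugated pairing $\langle f,g\rangle_B := \sum_{v\in V(T)} f(v)g(v)$, which is controlled by $\|f\|_2\|g\|_2$ by Cauchy--Schwarz. The hypothesis $\overline{M}^*=M$ translates into the matrix identity $M_{uv}=M_{vu}$, and this is exactly what makes $M$ formally self-adjoint with respect to $\langle\cdot,\cdot\rangle_B$: one has $\langle Mh,g\rangle_B = \langle h,Mg\rangle_B$ for all $h,g\in\ell^2(V(T))$. Granting this identity, the claim falls out in one line: if $Mh=0$, then for any $u\in V(T)$ take $g=\tG(u,\,\cdot\,)$, which lies in $\ell^2$ by Claim~\ref{C:columns-in-l2} and satisfies $Mg=\delta_u$ (using Claim~\ref{C:vanish_at_root} to guarantee $(Mf_u)(u)\neq 0$, so $f_u$ can be normalized to $g_u$). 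Then
\[
0 \;=\; \langle Mh,g\rangle_B \;=\; \langle h,Mg\rangle_B \;=\; \langle h,\delta_u\rangle_B \;=\; h(u),
\]
and since $u$ was arbitrary, $h\equiv 0$.

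The main obstacle is justifying the swap $\langle Mh,g\rangle_B = \langle h,Mg\rangle_B$, which is really a Fubini interchange on $\sum_{u,v} M_{vu}h(u)g(v)$. Because $M$ is local, $(Mh)(v)$ is a finite sum at each $v$, so there are no issues defining either side; for the interchange I need absolute convergence of the double sum, which follows once the entrywise absolute value $|M|$ is a bounded operator on $\ell^2$. Since $M$ is the pullback of a local operator on the finite graph $H$ to a covering tree of bounded degree, the matrix entries take only finitely many values and degrees are globally bounded, so $\||M|\|<\infty$ and $\sum_v|g(v)|\cdot(|M||h|)(v) \le \||M|\|\,\|h\|_2\,\|g\|_2$ by Cauchy--Schwarz, which is exactly the bound needed to apply Fubini. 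One subtlety worth flagging is that "symmetric" here means $M=M^T$ and not $M=M^*$, so the standard Hilbert inner product would move $M$ across only at the cost of a complex conjugate; the bilinear pairing $\langle\cdot,\cdot\rangle_B$ is precisely what avoids this. Note also that this proof is logically independent of Claim~\ref{C:conv}: together they will give that $M$ is a bijection, from which the open mapping theorem supplies the bounded inverse and hence $\tG=G$.
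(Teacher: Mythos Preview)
Your argument is correct and is a mild variant of the paper's. The paper stays with the Hermitian inner product: from $Mf=0$ it gets $0=\langle Mf,g\rangle=\langle f,M^*g\rangle=\langle f,\overline{M}g\rangle$ for all $g$, then observes that the same finite ratio system $\{r_e\}$ (conjugated) is a finite ratio system for $\overline{M}$ with the same $\alpha<1$, so Claim~\ref{C:conv} applied to $\overline{M}$ makes $\overline{M}$ surjective and hence $f\perp\ell^2$.

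Your route sidesteps both the passage to $\overline{M}$ and the appeal to Claim~\ref{C:conv}: by switching to the bilinear pairing you keep $M$ on the nose (since $M^T=M$), and then you only need $\delta_u$ in the image of $M$, which is exactly what the construction of $\tG(u,\cdot)$ already gives. So your proof is logically lighter---it uses only Claim~\ref{C:columns-in-l2} and the normalizability established after Claim~\ref{C:vanish_at_root}---whereas the paper's version packages things through the more general surjectivity statement. Your Fubini justification via boundedness of $|M|$ is fine and is implicitly what the paper is using as well when it freely moves $M$ across the pairing.
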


\begin{proof}[Proof of \clmref{conv}]
  Given $h\in l^2(T)$, define $\theta$ as
  \[
  \theta(v)=\sum_{u\in V(T)} \tG(u,v) h(u);
  \]
  \clmref{columns-in-l2} and Cauchy-Schwarz combined imply that the above
  sum converges. It also follows that $M \theta = h$ at each vertex. It
  remains to show that $\theta\in l^2(T)$, so we need to bound
  \begin{align}
    \begin{split}\label{eq:theta_bd}
      \sum_v \big| \theta(v) \big|^2
      &\le \sum_{u,v,w} \left|h(u)h(w)\tG(u,v)\tG(w,v) \right| \\
      &= \sum_{u,w} \big| h(u)h(w)H(u,w) \big|,
    \end{split} \\
    \intertext{where}
    H(u,w) &= \sum_v \Big| \tG(u,v)\tG(w,v) \Big|.   \notag
  \end{align}

  The tree paths $[v,u]$ and $[v,w]$ separate at some point $x$, which is
  along the path $[u,w]$ (possibly, $x=u$ or $x=w$.) Separating the last
  sum according to the value of $x=x(v,u,w)$, using \eqref{eq:split} and
  summing over $v$ we get
  \begin{equation}\label{eq:H_bd}
  \begin{aligned}
  H(u,w)
  & \le c \sum_{\Tuxwv}  \big| \tG(u,x)\tG(w,x)\tG(x,v)^2 \big|  \\
  & \le c' \sum_{\Tuxw}  \big| \tG(u,x)\tG(w,x) \big|, \\
  \end{aligned}
  \end{equation}
  where the first diagram means that we sum over all vertices $u,w,v,x$
  such that the subtree they span has the given structure (with $x$ at the
  branching point in the first case, and along the path in the second).
  Note that it is possible to use \eqref{eq:split} at this stage to
  eliminate $x$. We keep $x$ to facilitate discussion in the next section.
  Inserting \eqref{eq:H_bd} in \eqref{eq:theta_bd} gives
  \[
  \sum_v |\theta(v)|^2 \le c \sum_{\Tuxw} |h(u)h(w)\tG(u,x)\tG(w,x)|.
  \]
  Fix an arbitrary root in $V(T)$ and let $z$ be the last common ancestor
  of $u,w$ in the above sum. Either $x\in[u,z]$ or $x\in[w,z]$. We bound
  the sum for $x\in[u,z]$, the proof for other sum being identical. We need
  to show that
  \[
  \sum_{\Tuxzw} \Big|h(u)h(w)\tG(u,x)\tG(w,x)\Big| < \infty.
  \]
  Note that $z$ must be the common ancestor of $x,w$. To avoid division by
  zero, denote for any two vertices $u,v$
  \[
  \rho'(u,v) = \max\{1,\rho(u,v)\}. 
  \]

  Using \eqref{eq:split} for $z\in[x,w]$ and Cauchy-Schwarz we have
  \begin{align*}
    \Big|h(u)h(w)\tG(u,x)\tG(w,x)\Big|
    &\le c \Big| h(u)h(w)\tG(u,x)\tG(z,x)\tG(w,z) \Big| \\
    &\le c \Big| h(w)\tG(u,x)\tG(z,x)
                 \frac{\rho'(z,x)\rho'(x,u)}{\rho'(z,w)} \Big|^2 \\
    & \hspace{3cm} + c \Big| h(u)\tG(w,z)
                 \frac{\rho'(z,w)}{\rho'(z,x)\rho'(x,u)} \Big|^2.
  \end{align*}
  Consider the first term. Its contribution to the sum can be written as
  \[
  c \sum_{\Tuxzw} \Big| h(w)\tG(u,x)\tG(z,x)
                 \frac{\rho'(z,x)\rho'(x,u)}{\rho'(z,w)} \Big|^2.
  \]
  $u$ lies below $x$ in the tree, and the sum of $\rho'(x,u)^2
  |\tG(u,x)|^2$ over such $u$ is uniformly bounded. Similarly we can sum
  $\rho'(z,x)^2 |\tG(z,x)|^2$ over all $x$ below $z$. Summing over $u$ and
  then over $x$ we are left with
  \[
  c \sum_{\Tzw} \frac{|h(w)|^2}{\rho'(z,w)^2}.
  \]
  Since $z$ must lie on the path from $w$ to the root, summing over $z$, we
  are left with
  \[
  c\sum_{n=0}^\infty \frac{|h(w)|^2}{(\max(1,n))^2} 
  \leq c' \|h\|_2^2 < \infty.
  \]

  The second term above is similarly bounded, except that we sum first over
  $w$, then over $z$, $x$ and $u$ in that order to end up with
  $c'\|h\|_2^2$ again.
\end{proof}

\begin{proof}[Proof of \clmref{kernel}]
  Suppose that $Mf=0$. Then for every function $g \in l^2(T)$, using the
  symmetry of $M$,
  \[
  0 = \langle M f, g \rangle
  = \langle f, M^* g\rangle
  = \langle f, \overline{M} g\rangle.
  \]
  Since we have a {\finiteRS} for $M$ with $\alpha<1$, we also have one for
  $\overline{M}$. By \clmref{conv} for $\overline M$, its image is all
  of $l^2$, and so for every $h \in l^2(T)$ we have $0=\langle f,
  h\rangle$. Thus $f$ must be 0.
\end{proof}

This completes the proof of \thmref{main_simple}. Now we give a proof of
Theorem~\ref{T:alpha1general} in the case of finite ratio systems.

\begin{thm}\label{T:alpha_1}
  If there is a {\finiteRS} for $M$ with $\alpha(r)=1$ then $M$ is
  not invertible.
\end{thm}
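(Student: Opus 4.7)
The plan is to assume $M$ is invertible and derive a contradiction by producing approximate eigenfunctions for the eigenvalue $0$, which would force $0 \in \sigma(M)$.

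The central step is to show that $f_u(v) := r(\vec p(u,v))$, as defined in Section~6, does not lie in $l^2(V(T))$. By \clmref{vanish_at_root}, $Mf_u = C_u \delta_u$ for a scalar $C_u$, and $C_u$ vanishes either for every $u$ or for no $u$. If $f_u \in l^2$ and $C_u = 0$, then $f_u$ is a nonzero element of $\ker M$, contradicting invertibility. If $f_u \in l^2$ and $C_u \ne 0$, then invertibility forces $f_u/C_u = G(u,\cdot)$, whence $G(u,u) = 1/C_u \ne 0$. The same conclusion holds at every vertex $v$: the ratio $f_v(w)/f_u(w) = r(\vec p(v,y))/r(\vec p(u,y))$, where $y$ is the median of $\{u,v,w\}$, takes only finitely many values as $w$ varies (one per vertex $y$ on $[u,v]$), so $f_v \in l^2$ as well. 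But then $G(u,v)/G(u,u) = f_u(v) = r_e$ for every edge $e = (u,v)$, showing that $r$ coincides with the ratio system built from $G$ in the forward direction of \thmref{main_simple}; that argument then gives $\alpha < 1$, contradicting $\alpha(r) = 1$. Hence $f_u \notin l^2$.

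Next I truncate: let $f_N$ denote $f_u$ restricted to the ball of radius $N$ around $u$. Using the recursion \eqref{eq:recursion}, one checks that $(Mf_N)(v) = 0$ for $0 < \rho(u,v) < N$, $(Mf_N)(u) = C_u$, and $|(Mf_N)(v)| = O(|f_u(v)|)$ at level $N$ together with $|(Mf_N)(v)| = O(|f_u(\mathrm{parent}(v))|)$ at level $N+1$. Writing $S_n = \sum_{\rho(u,v)=n} |f_u(v)|^2$ and $T_N = \sum_{n=0}^N S_n$, these estimates give
\[
\|Mf_N\|^2 \le C_1(1 + S_N), \qquad \|f_N\|^2 = T_N,
\]
and $T_N \to \infty$ since $f_u \notin l^2$.

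The final step is to find a subsequence $N_k$ along which $S_{N_k}/T_{N_k} \to 0$, for then $\|Mf_{N_k}\|/\|f_{N_k}\| \to 0$ and $0 \in \sigma(M)$. As in Section~6, $S_n$ is (up to bounded factors) an entry of $R^{n-1}\mathbf 1$; since $R$ is a finite nonnegative matrix with spectral radius $\alpha(r) = 1$, the Jordan decomposition of $R$ yields $\|R^n\| = O(n^K)$ for some $K$, so $S_n$ and $T_N$ grow at most polynomially in $N$. If $S_N/T_N \ge c > 0$ for all large $N$, then $T_{N-1} \le (1-c)T_N$ would force $T_N$ to grow exponentially, contradicting the polynomial bound; hence the required subsequence exists. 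The main obstacle is the opening step, where one must verify that the hypothetical Green function really reproduces the given ratio system $r$ rather than some competing one; this rests on the global dichotomy of \clmref{vanish_at_root} together with the stability $f_u \in l^2 \Rightarrow f_v \in l^2$ at every vertex $v$.
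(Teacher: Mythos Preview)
Your argument is correct and rests on the same core mechanism as the paper's proof: truncate the ratio-system function $f_u$ and use the level-sum identities $\|f_N\|^2=T_N$, $\|Mf_N\|^2=O(1+S_N)$ together with $\alpha(r)=1$ to produce approximate eigenfunctions for $0$. The execution, however, is organized differently in two respects.

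First, to secure $T_N\to\infty$ the paper simply picks the starting edge $e$ in the support of the Perron eigenvector of $R$, which guarantees that the row sum $S_n$ of $R^n$ is bounded below by a nonzero polynomial; no appeal to invertibility of $M$ or to the forward direction of \thmref{main_simple} is needed. Your route instead assumes $M$ invertible, identifies $f_u$ with $C_uG(u,\cdot)$, and then invokes the already-proved implication $(1)\Rightarrow(2)$ of \thmref{main_simple} to force $\alpha(r)<1$, yielding $f_u\notin l^2$ by contradiction. This is valid but more circuitous.

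Second, to obtain $S_N/T_N\to 0$ the paper asserts a polynomial asymptotic $S_n=P(n)+o(1)$ (hence $T_N\sim cN^{k+1}$ versus $S_N\sim cN^k$), whereas you use only the soft bound $\|R^n\|=O(n^K)$ and the observation that $S_N/T_N\ge c>0$ eventually would force exponential growth of $T_N$. Your version gives the conclusion only along a subsequence, which suffices; it has the virtue of not relying on the finer asymptotic claim, which can be delicate when $R$ is reducible or imprimitive.
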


\begin{proof}
  Fix some edge $e=(u,v) \in \tH$. Using a {\finiteRS} $r$ with
  $\alpha(r)=1$ we consider the function
  \[
  f_n(x) = \begin{cases}
    r(\vec p) & \mbox{if $x\in T_e$ and $\rho(x,u)=n$,} \\
    0 & \text{otherwise,}
  \end{cases}
  \]
  and let $F_n = \sum_{k=0}^n f_k$. Thus $\|F_n\|^2=\sum^n \|f_k\|^2$.
  Since $MF_n$ is 0 on levels $1,\dots,n-1$ of the tree, it is supported on
  level 0 and levels $n,n+1$. Consequently $\|MF_n\|^2 = O(\|f_n\|^2)$.

  Recall that $\|f_n\|^2$ is the sum of the row corresponding to $e$ in
  $R^n$. Since the Perron eigenvalue of $R$ is 1, this sum grows
  polynomially in $n$ and we have $\|f_n\|^2 = P(n)+o(1)$ for some
  polynomial $P$. By choosing the edge $e$ to be in the support of the
  Perron eigenfunction of $R$ we can guarantee $P\not\equiv0$.

  Assume $\deg P=k$ (which might be 0), with leading term $c n^k$. Then
  $\|F_n\|^2 = c n^{k+1}/(k+1)+ O(n^k)$, while $\|M F_n\| = O(n^k)$. Thus
  $\|MF_n\| / \|F_n\| \to 0$ as $n\to\infty$, and so $M$ cannot have a
  bounded inverse.
\end{proof}

\begin{coro}\label{C:spectral_radius}
  For any finite graph $H$, we have $\rho(\tB)=\sqrt\gr$. Moreover, $\{\lam
  : |\lam|=\sqrt\gr\} \subset \sigma(\tB)$.
\end{coro}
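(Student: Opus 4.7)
The plan is to combine an upper bound obtained from \thmref{tree_annulus} with a lower bound produced by exhibiting, for each $\lambda$ on the circle $|\lambda|=\sqrt{\gr}$, an explicit {\finiteRS} for $Q_\lambda$ with decay rate exactly $1$, then converting failure of invertibility into membership in $\sigma(\tB)$ via \thmref{small_spec} and \thmref{alpha_1}. Since $\tH$ is a locally finite tree without leaves (the generic case) with uniformly bounded growth $\overline{\gr}=\gr$, \thmref{tree_annulus} directly yields $\sigma(\tB)\subset\{|\lambda|\le\sqrt{\gr}\}$, hence $\rho(\tB)\le\sqrt{\gr}$.

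For the reverse inclusion, the crux is the observation that the constant assignment $r_e\equiv1/\lambda$ on $E(H)$ is a {\finiteRS} for $M=Q_\lambda=Q-\lambda A+\lambda^2 I$ whenever $\lambda\ne0$. Indeed, $m_{vv}=d_v-1+\lambda^2$ and $m_{e'}=-\lambda$ for every directed edge, so the recursion \eqref{eq:recursion} at an edge $e=(u,v)$ reduces to
\[
(d_v-1+\lambda^2)+\frac{-\lambda}{1/\lambda}+\sum_{e\to e'}(-\lambda)\cdot\frac{1}{\lambda}=(d_v-1+\lambda^2)-\lambda^2-(d_v-1)=0,
\]
which holds identically in $v$ and $\lambda$, while conditions (b) and (c) of \defref{generalized-ratio-system} are vacuous. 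Feeding this $r$ into \eqref{eq:defineR} yields $R=(1/|\lambda|^2)\,B(H)$, a positive scalar multiple of the non-backtracking matrix of the finite graph $H$; its Perron eigenvalue is therefore $\gr/|\lambda|^2$, and consequently $\alpha(r)=1$ if and only if $|\lambda|=\sqrt{\gr}$.

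For $\lambda$ with $|\lambda|=\sqrt{\gr}$, \thmref{alpha_1} then tells us that the pullback of $Q_\lambda$ to $\tH$ is not invertible. Provided $\lambda\ne\pm1$, \thmref{small_spec} places $\lambda$ in $\sigma(\tB)$; the exceptional values $\pm1$ (which can lie on this circle only in degenerate cases such as $\gr=1$) already belong to $\sigma(\tB)$ by part (1) of that same theorem. Combined with the upper bound, this gives both $\rho(\tB)=\sqrt{\gr}$ and the stronger inclusion $\{\lambda:|\lambda|=\sqrt{\gr}\}\subset\sigma(\tB)$. There is essentially no hard step; the main conceptual content is spotting the ansatz $r_e\equiv1/\lambda$, after which the decay rate is controlled purely by the ratio $|\lambda|^2/\gr$ and so detects precisely the circle on which $\sigma(\tB)$ saturates the annulus from \thmref{tree_annulus}.
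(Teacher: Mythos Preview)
Your proof is correct and largely parallel to the paper's argument: you use the same constant ratio system $r_e\equiv1/\lambda$, compute $\alpha(r)=\gr/|\lambda|^2$, and invoke \thmref{alpha_1} together with \thmref{small_spec} to place the circle $|\lambda|=\sqrt{\gr}$ in $\sigma(\tB)$.

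The one genuine difference is in the upper bound. You appeal to \thmref{tree_annulus} to get $\rho(\tB)\le\sqrt{\gr}$ directly, whereas the paper stays inside the ratio-system framework: for $|\lambda|>\sqrt{\gr}$ the same system $r_e\equiv1/\lambda$ has $\alpha(r)<1$, so \thmref{main_simple} gives invertibility of $Q_\lambda$, and \thmref{small_spec} then excludes $\lambda$ from $\sigma(\tB)$. The paper's route is pleasingly self-contained---a single ansatz yields both directions---while yours is slightly more economical in that it reuses an already-proved spectral radius bound and only invokes the ratio machinery once. Either is fine; your handling of the $\lambda=\pm1$ edge case is a harmless redundancy, since \thmref{small_spec} already includes those points unconditionally.
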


\begin{proof}
  By \thmref{small_spec}, it suffices to show that $M=Q_\lam$ is invertible
  for $|\lam|>\sqrt\gr$, and is not invertible for $|\lam|=\sqrt\gr$. Since
  $m_{vv} = d_v-1+\lam^2$ and $m_e=-\lam$ for any $e$, it is
  straightforward to see that $r_e\equiv \lam^{-1}$ satisfies
  \eqref{eq:recursion}. This {\finiteRS} has $\alpha(r)=\lam^{-2}\gr$.
  \thmref{main_simple} shows that $Q_\lam$ is invertible whenever $|\lam| >
  \sqrt\gr$. However, if $|\lam|=\sqrt\gr$ then $\alpha(r)=1$ and
  \thmref{alpha_1} implies that $Q_\lam$ is singular.
\end{proof}

\section{Zeroes and Infinities}
\label{se:maintheoremproof}

In this section we extend the results of the previous section to the case
where $G(u,u)$ could vanishes for some $u$, i.e., where ratios are allowed
to be zero or infinity; we finish the proofs of Theorem~\ref{T:main} and
\ref{T:small_spec}. We discuss some examples where $G(u,u)$ vanishes for
some $u$ in Section~\ref{se:examples}.

We start with some remarks about
Definition~\ref{D:generalized-ratio-system}. Conditions~(b) and (c) replace
condition~(a) in the situation of zero and infinite ratios. Condition (b)
is easy to justify: $m_{e^{-1}}/r_e=\infty$, so there must be an infinite
term in the sum as well. Condition (c) effectively states that the only way
to have $r_f=\infty$ is as in condition (b). In particular, there cannot be
two edges leaving a vertex with infinite ratios (since then their inverses
both must and cannot have ratio zero).

As before, we need to extend $r$ to non-backtracking paths, and would like
$r(\vec p)$ to be the product of the ratios along the edges of the path. In
the presence of infinite ratios this needs clarification. We define $r(\vec
p)$ inductively. For paths of a single edge, $r(e)=r_e$, and for the an
empty path, $\eps$, $r(\eps)=1$).

For longer paths, note that an infinite ratio must be preceded by a zero
ratio, so we must determine the value of $0\cdot\infty$. For paths of
length $m\ge2$ we define $r(\vec p)$ inductively as follows:
\begin{equation}\label{eq:r_path}
  r(e_1,\dots,e_m) = \begin{cases}
    r_{e_1}r(e_2,\dots,e_m) & r_{e_1},r_{e_2} \ne \infty, \\
    -\frac{m_{e_2}}{m_{e_1^{-1}}} r(e_3,\dots,e_m) & 
    r_{e_1}=0, r_{e_2}=\infty, \\
    \infty & r_{e_1} = \infty. \\
  \end{cases}
\end{equation}
In the case $r_{e_1}=0$ and $r_{e_2}=\infty$, we effectively define
``$r_{e_1}r_{e_2}$'' (literally zero times infinity) to be
$r(e_1,e_2)=-m_{e_2}/m_{e_1^{-1}}$, since the uniqueness of $G$ implies
that $G(v,v)=0$ and $G(v,u)\ne 0$ implies that $G(v,w)\ne 0$ for exactly
one other neighbour of $u$, and furthermore $m_{vw}G(v,w)+m_{vu}G(v,u)=0$.

We remark that with this definition of $r(\vec p)$, the proof of
Theorem~\ref{T:alpha_1} carries over to prove
Theorem~\ref{T:alpha1general}.

We first assume that $M$ is invertible, and find the promised ratio system
$\{r_e\}$. Let $G$ be the Green function. Let $e=(u,v)$ be some edge in $T$
of type $e_0$. We define $r_e$ to be the generalized ratio $G(u,v)/G(u,u)$
as before, provided $G(u,v)$ and $G(u,u)$ do not both vanish. Otherwise,
since $MG(u,\cdot)$ does not vanish at $u$, there must be a neighbour $x$
of $u$ with $G(u,x)\ne0$. We then define $r_e=G(x,v)/G(x,u)$. In short
there is some $x$, either $x=u$ or $x\notin T_e$ such that $G(u,x),G(x,v)$
are not both zero and
\[
r_e = G(x,v)/G(x,u).
\]
By \corref{proportional}, this ratio does not depend on our choice of $x$.
As before, $r_e$ depends only on the type of $e$, so this it is naturally
equivalent to a function on edges of $H$.

\begin{claim}
  The function $r$ defined above is a {\generalizedRS} for $M$ with
  $\alpha(r)<1$.
\end{claim}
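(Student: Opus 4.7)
The plan is to verify the three conditions of \defref{generalized-ratio-system} for $r$ by invoking $(MG(x,\cdot))(v)=0$ at suitable vertices, and then to deduce $\alpha(r)<1$ from the $\ell^2$-integrability of Green function columns. For each edge $e=(u,v)$, let $x_e$ denote the auxiliary vertex used in defining $r_e$; by \corref{proportional}, any valid choice yields the same generalized ratio, and for any $e'$ with $e\to e'$, $x_e$ is a valid auxiliary vertex for $e'$ as well, so $r_{e'}=G(x_e,h(e'))/G(x_e,v)$ (with $h(e')$ the head of $e'$) whenever this ratio is defined.

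Condition~(a) is immediate: if $r_e\ne 0$ then $G(x_e,v)\ne 0$, and $(MG(x_e,\cdot))(v)=0$ divided by $G(x_e,v)$ yields~(\ref{eq:recursion}). Conditions~(b) and~(c) require a case analysis via \lemref{one_dim} and \corref{proportional}. For~(b), if $r_e=0$ then $G(x_e,v)=0$ while $G(x_e,u)\ne 0$; the identity at $v$ produces some $e\to e'=(v,w)$ with $G(x_e,w)\ne 0$, and the proportionality $G(v,\cdot)|_{T_{e'}}\propto G(x_e,\cdot)|_{T_{e'}}$ (with the cases of \lemref{one_dim} giving $G(v,w)=0$ or $G(v,v)\ne 0$ ruled out by contradiction against $G(x_e,v)=0$ and $G(x_e,w)\ne 0$) forces $G(v,v)=0$ and $G(v,w)\ne 0$, so $r_{e'}=\infty$. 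For~(c), $r_f=\infty$ for $f=(v,w)$ can arise only through the first case of the definition (the second case picks $x_f$ with $G(x_f,v)\ne 0$, precluding an infinite ratio), so $G(v,v)=0$ and $G(v,w)\ne 0$; by \lemref{one_dim} the support of $G(v,\cdot)$ is exactly $T_f$, so $G(u,v)=0$ for any $e=(u,v)\to f$, and a short case on $G(u,u)$ (in the subcase $G(u,u)=0$ the support of $G(u,\cdot)$ cannot be $T_e$, so the alternate $x_e$ lies outside $T_e\supseteq T_f$ and gives $G(x_e,v)=G(v,x_e)=0$) yields $r_e=0$. Finally $r_{f^{-1}}\ne 0$ follows from $G(w,v)=G(v,w)\ne 0$ via the first case of the definition with $y''=w$.

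For $\alpha(r)<1$: fix $e_0\in E(H)$, lift to $e=(u,v)\in E(T)$, and set $\phi=G(x_e,\cdot)|_{T_e}\in\ell^2(T_e)$. The recursions verified above show that $\phi$ along non-backtracking paths from $u$ is governed by the generalized ratio products of~(\ref{eq:r_path}); at each consecutive $(0,\infty)$-pair of edges $(e_i,e_{i+1})$, $(MG(x_e,\cdot))(h(e_i))=0$ gives exactly the compression $-m_{e_{i+1}}/m_{e_i^{-1}}$ underlying the weight $|m_{e_{i+1}}/m_{e_i^{-1}}|^2$ of~(\ref{eq:defineR}). Grouping vertices $y\in T_e$ with $\phi(y)\ne 0$ by ``$R$-depth'' (counting each $(0,\infty)$-pair as a single step), the sum of $|\phi(y)/\phi(v)|^2$ over $y$ at $R$-depth $n$ equals the $e$-th row sum of $R^n$. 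Since $\|\phi\|^2<\infty$, these row sums are summable in $n$, forcing $\alpha(r)<1$ by Perron--Frobenius, as at the end of the proof of \thmref{main_simple}.

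The main obstacle is the case analysis in~(b) and~(c) involving the interplay of zero and infinite ratios with vanishing Green function values --- in particular, ruling out pathological sub-cases of \lemref{one_dim} via \corref{proportional} --- and the reconciliation of tree depth with $R$-depth in the Perron--Frobenius step, where the two-step jumps in~(\ref{eq:defineR}) at infinite ratios are essential.
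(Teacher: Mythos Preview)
Your approach is essentially the paper's: verify (a)--(c) from the identity $(MG(x,\cdot))(v)=0$ together with \corref{proportional}, then deduce $\alpha(r)<1$ by identifying the row sums of $R^n$ with level sums of $|G(x,\cdot)|^2$ restricted to $T_e$ and invoking Perron--Frobenius. The one place you work harder than necessary is~(b): you pass through the trichotomy of \lemref{one_dim} to force $G(v,v)=0$ and $G(v,w)\ne0$, whereas the paper simply notes that $x_e$ is itself a valid auxiliary vertex for $e'=(v,w)$ (it lies outside $V(T_{e'})\setminus\{v\}$ and $G(x_e,w)\ne0$), so $r_{e'}=G(x_e,w)/G(x_e,v)=\infty$ in one line. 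The same observation streamlines~(c): once $r_f=\infty$, \corref{proportional} gives $G(x,v)=0$ for \emph{every} $x\notin V(T_f)\setminus\{v\}$, so whichever auxiliary vertex defines $r_e$ already has $G(x,v)=0$, forcing $r_e=0$ without your case split on $G(u,u)$. Two small omissions to fix: in the $\alpha(r)<1$ step you should restrict to $r_{e_0}\ne0$ (both so that $e_0$ indexes a row of $R$ and so that $\phi(v)\ne0$), and the paths governing $\phi$ should start at $v$, not $u$.
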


\begin{proof}
  Throughout we assume that $e=(u,v)$ and that $x$ is either $u$ or outside
  $T_e$ and $G(x,v)/G(x,u)$ is defined (and therefore equals $r_e$). In
  particular $x\ne v$ and so $MG(x,\cdot)=\delta_x$ vanishes at $v$. Thus
  we have
  \begin{equation} \label{eq:MGx_v}
    0 = m_{vv} G(x,v) + m_{e^{-1}} G(x,u)
      + \sum_{\substack{w\sim v\\w\ne u}} m_{vw} G(x,w).
  \end{equation}

  To check (a), note that $r_e\ne0$ implies $G(x,v)\ne0$, and so the same
  $x$ may be used to find $r_f$ for $f=(v,w)$. Writing
  $G(x,w)=G(x,v)r_{vw}$ in \eqref{eq:MGx_v} gives
  \[
  0 = m_{vv} + m_{e^{-1}} \frac{G(x,u)}{G(x,v)} +
    \sum_{\substack{w\sim v\\w\ne u}} m_{vw} r_{vw}.
  \]
  This is just \eqref{eq:recursion}, since $\frac{G(x,u)}{G(x,v)}=1/r_e$
  (meaning $0$ if $r_e=\infty$).

  To prove (b), suppose that $r_e=0$. Then we have $G(x,u)\ne0$ and
  $G(x,v)=0$, and so \eqref{eq:MGx_v} gives
  \[
  \sum_{\substack{w\sim v\\w\ne u}} m_{vw} G(x,w) = -m_{vu} G(x,u) \ne 0
  \]
  Thus not all of $G(x,w)$ are 0. Let $w\sim v$ such that $G(x,w)\ne0$,
  then the same $x$ can be used to find $r_f$ for $f=(v,w)$, and $r_{vw} =
  G(x,w)/G(x,v)=\infty$.

  To verify (c), note that if $r_f = \infty$ for $f=(v,w)$, then by
  \corref{proportional}, we must have $G(x,v)=0$ for all $x \not \in V(T_f)
  \setminus \{v\}$. In particular $G(v,v)=0$. It follows that $G(v,w) \neq
  0$, so $w$ can be used to find $r_{f^{-1}}$ and so
  $r_{f^{-1}}=\frac{G(w,v)}{G(w,w)}\ne 0$. Finally, to check that $r_e=0$,
  suppose $x$ is used to find $r_e$. Then $G(x,v)=0$, for otherwise $x$
  could be used to find $r_f$, and $r_f$ would be finite.

  \medskip

  It remains to show that $\alpha(r)<1$. As before, we show that the Perron
  eigenvalue of $R$ satisfies $\alpha(r)<1$ by proving that for all $e$
  with $r_e\ne0$ the sum of row $e$ in $R^k$ tends to zero with $k$. Pick
  an edge $\tilde e=(u,v)$ of type $e$ with $r_e\ne0$. As above, fix some
  vertex $x \notin T_{\tilde e}\setminus\{u\}$ satisfying $G(x,v)\ne 0$. We
  claim that
  \begin{equation}\label{eq:ratios_in_L2}
    \sum_{y\in T_{\tilde e}} |G(x,y)|^2
    = |G(x,u)|^2 + |G(x,v)|^2 \sum_f \sum_{k=0}^\infty (R^k)_{ef}
  \end{equation}
  This implies the required result, since the left-hand-side is finite. To
  see \eqref{eq:ratios_in_L2}, for every $y\in T_{\tilde e}$ except for
  $u$, the path $[v,y]$ in $T$ projects to some non-backtracking walk $\vec
  p_y$ in $H$. This gives a bijection between such $y$ and non-backtracking
  walks in $H$ that begin with an edge $e'$ such that $e\to e'$. We have
  $G(x,y)=G(x,v)r(\vec p_y)$.

  The definition of $R$ is such that
  \[
  (R^k)_{ef} = \sum |r_{\vec p}|^2,
  \]
  where the sum is over paths in $H$ that begin with an edge $e'$ such that
  $e \to e'$, end with edge $f$ and, for some $m$, contain $k+m$ steps with
  $m$ edges with $r_{e_i}=\infty$. The above identity follows.
\end{proof}

\medskip

Conversely, given a {\generalizedRS} $r$ with $\alpha(r)<1$, we construct a
candidate for the Green function $\tG(\cdot,\cdot)$ as in the proof of
\thmref{main_simple} and use it to construct $M^{-1}$. Our first goal is to
find for each $u$ a function $f_u$ so that $M f_u = c\delta_u$. We can then
define $\tG(u,v)=c^{-1}f_u(v)$.

Generally we take $f_u(x) = r(\vec p)$ where $\vec p$ is (the projection
of) the path from $u$ to $x$. However, if there is an edge $e$ leaving $u$
with $r_e=\infty$, then this results in some infinite values of $f_u$. In
this case, roughly speaking, we ``divide by $\infty$''. All finite values
will become 0, and the infinite ones will become finite. We shall now make
this more precise.

Let $V_\infty$ denote the set of all $u$ such that there is an edge
$e=(u,v)$ with $r_e=\infty$:
\[
V_\infty = \{u : \exists v, r_{uv}=\infty\}
\]
As noted above, for $u\in V_\infty$, the $v$ with $r_{uv}=\infty$ must be
unique. Define $u^*$ to be this neighbour, $v$; otherwise, for $u\notin
V_\infty$, let $u^*=u$. Then (recall $\pi\from T\to H$ is the covering map)
set
\begin{equation}\label{eq:fu}
f_u(x) = \begin{cases}
  r({\vec p}) & \text{if $u\notin V_\infty$, where $\vec p = \pi([u,x])$,}
  \\
  r({\vec q}) & \text{if $u\in V_\infty$ and $x\in T_{(u,u^*)}
                      \setminus\{u\}$, where $\vec q = \pi([u^*,x])$}, \\
  0 & \text{otherwise.}
\end{cases}
\end{equation}

We further denote $Z(u) = \sum_{e=(u,v)} m_e r_e$ where the sum is over
edges leaving $u$ with $r_e\ne\infty$ (i.e., if one of the edges has
$r_e=\infty$, then it is not included).

\begin{claim} \label{C:factor}
  For any $u$ we have
  \begin{equation} \label{eq:factor}
    (M f_u)(u) = \begin{cases}
      m_{uu^*} & u\in V_\infty, \\
      m_{uu} + Z(u) & u \not\in V_\infty.
    \end{cases}
  \end{equation}
\end{claim}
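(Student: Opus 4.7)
The plan is to prove this claim by direct evaluation of the definition of $f_u$ in \eqref{eq:fu} at $u$ and at each of its neighbors, handling the two cases $u\notin V_\infty$ and $u\in V_\infty$ separately. Since $M$ is local, $(Mf_u)(u)$ expands as $m_{uu}f_u(u) + \sum_{v\sim u} m_{uv} f_u(v)$, so everything reduces to reading off the values of $f_u$ on the 1-neighborhood of $u$.

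In the case $u\notin V_\infty$, the first branch of \eqref{eq:fu} applies at every point near $u$. Taking $\vec p$ to be the empty path at $x=u$ gives $f_u(u)=1$, and taking $\vec p$ to be the single directed edge $(u,v)$ at a neighbor $x=v$ gives $f_u(v)=r_{(u,v)}$; note that since $u\notin V_\infty$, every such $r_{(u,v)}$ is finite, so no subtleties involving the $0\cdot\infty$ clause of \eqref{eq:r_path} arise. Plugging these into $(Mf_u)(u)$ produces exactly $m_{uu} + \sum_{v\sim u} m_{uv} r_{uv} = m_{uu} + Z(u)$, which is the bottom line of \eqref{eq:factor}.

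In the case $u\in V_\infty$, the definition gives $f_u(u)=0$ (since $u$ is excluded from the second branch), so the $m_{uu}$ term drops out. Recall that the uniqueness of a vertex $u^*$ with $r_{(u,u^*)}=\infty$ is guaranteed by condition~(c) in Definition~\ref{D:generalized-ratio-system}, which I will use here. For each neighbor $v$ of $u$: if $v\ne u^*$, then $v$ lies outside $T_{(u,u^*)}$, so $f_u(v)=0$; if $v=u^*$, then $v\in T_{(u,u^*)}\setminus\{u\}$ and $\vec q=\pi([u^*,u^*])$ is the empty path, so $f_u(u^*)=1$. Hence only the term $m_{uu^*}\cdot 1$ survives, matching the top line of \eqref{eq:factor}.

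The entire argument is essentially a bookkeeping exercise; no step looks difficult, but the main point to be careful about is that in the $u\in V_\infty$ case one must invoke the uniqueness of $u^*$ (and hence condition~(c)) to be sure that all other neighbors $v$ of $u$ indeed satisfy $v\notin T_{(u,u^*)}$ and thus contribute zero. Once this is noted, the claim follows from a one-line computation in each case.
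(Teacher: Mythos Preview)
Your proof is correct and follows essentially the same direct computation as the paper's own proof: evaluate $f_u$ at $u$ and at each neighbor using the definition \eqref{eq:fu}, then plug into $(Mf_u)(u)=m_{uu}f_u(u)+\sum_{v\sim u}m_{uv}f_u(v)$. The only cosmetic difference is that the paper had already recorded the uniqueness of $u^*$ just before defining it, whereas you re-invoke condition~(c) explicitly; either way the argument is the same.
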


\begin{proof}
  If $u \in V_\infty$ then we have $r_{uu^*}=\infty$. Consequently $f_u$ is
  supported on $T_{uu^*}$, $f_u(u)=0$ and for $v\sim u$ we have $f_u(v)$ is
  $1$ or $0$ according to whether or not $v=u^*$. The claim follows. For $u
  \notin V_\infty$ we have $f_u(u)=1$ and $f_u(v)=r_{uv}$ for all
  neighbours $v$ of $u$, so the equality holds also in this case.
\end{proof}

Define $V_0 = \{u : m_{uu}+ Z(u)=0\}$.

\begin{claim}
  \label{C:T0connected}
  If an edge $e=(u,v)$ has $r_e\notin\{0,\infty\}$ and $v\in V_0$, then
  $r_e r_{e^{-1}} = 1$ and $u\in V_0$.
\end{claim}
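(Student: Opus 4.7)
The plan is to use condition~(a) of the ratio system twice: once for $e=(u,v)$ to extract a relation involving $r_e$ and $r_{e^{-1}}$, and once for $e^{-1}=(v,u)$ after we know $r_{e^{-1}}$ is finite and nonzero. The comparison with the $V_0$ equation at $v$ will force $r_e r_{e^{-1}} = 1$, and the $e^{-1}$ application will then give the $V_0$ equation at $u$.

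First I would write out condition~(a) at $e$, which applies since $r_e\ne 0$:
\[
0 = m_{vv} + \frac{m_{e^{-1}}}{r_e} + \sum_{e'\colon e\to e'} m_{e'} r_{e'}.
\]
Because $r_e\neq 0$, condition~(c) rules out $r_{e'}=\infty$ for any $e'$ with $e\to e'$, so every term on the right is finite. Next I would compare with the $V_0$ relation $0 = m_{vv} + Z(v)$. Here I need to check that $r_{e^{-1}}\ne\infty$: if $r_{e^{-1}}=\infty$ then $Z(v)=\sum_{e\to e'}m_{e'}r_{e'}$ and subtraction would give $m_{e^{-1}}/r_e=0$, forcing $m_{e^{-1}}=0$ and contradicting connectedness of $M$. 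Thus $r_{e^{-1}}$ is finite, so $Z(v)=m_{e^{-1}}r_{e^{-1}}+\sum_{e\to e'}m_{e'}r_{e'}$, and subtracting the condition~(a) equation from $m_{vv}+Z(v)=0$ yields
\[
0 = m_{e^{-1}}\!\left(r_{e^{-1}} - \frac{1}{r_e}\right),
\]
so $r_e r_{e^{-1}} = 1$.

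For the second conclusion, I would now invoke condition~(a) at $e^{-1}$, which is legitimate since $r_{e^{-1}}=1/r_e$ is finite and nonzero:
\[
0 = m_{uu} + \frac{m_e}{r_{e^{-1}}} + \sum_{f\colon e^{-1}\to f} m_f r_f
  = m_{uu} + m_e r_e + \sum_{f=(u,w),\, w\ne v} m_f r_f.
\]
Again, condition~(c) applied to any putative $f$ with $r_f=\infty$ and $e^{-1}\to f$ would require $r_{e^{-1}}=0$, contradicting $r_{e^{-1}}\ne 0$; so all the summands in the $f$-sum are finite. Combined with the fact that $r_e$ itself is finite, the full sum $Z(u)$ is exactly $m_e r_e + \sum_{f=(u,w),\,w\ne v} m_f r_f$, and so $m_{uu}+Z(u)=0$, i.e., $u\in V_0$.

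The main obstacle, in the sense of subtlety rather than difficulty, is the bookkeeping around infinite ratios: one must verify at both ends of $e$ that the relevant neighbouring ratios cannot be $\infty$, so that $Z(u)$ and $Z(v)$ really agree with the sums appearing in condition~(a). Both checks reduce cleanly to condition~(c) plus connectedness of $M$ (so that the $m$-coefficients are nonzero), which is why the argument closes without any appeal to the decay rate $\alpha(r)$ or to invertibility of $M$.
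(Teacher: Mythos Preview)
Your proof is correct and follows the same route as the paper: rewrite condition~(a) at $e$ in terms of $Z(v)$, use $v\in V_0$ to force $r_e r_{e^{-1}}=1$, then apply condition~(a) at $e^{-1}$ to get $m_{uu}+Z(u)=0$. You are in fact more careful than the paper's terse proof in explicitly invoking condition~(c) and connectedness to rule out infinite ratios among the neighbouring edges (and to exclude $r_{e^{-1}}=\infty$), which are exactly the checks needed to make the comparison with $Z(v)$ and $Z(u)$ legitimate.
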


\begin{proof}
  \eqref{eq:recursion} can be written as $m_{vv} + m_{e^{-1}}/r_e + Z(v) -
  m_{e^{-1}}r_{e^{-1}}=0$. If $v\in V_0$ this becomes $1/r_e-r_{e^{-1}}=0$,
  or $r_e r_{e^{-1}} = 1$. Applying this to \eqref{eq:recursion} for
  $e^{-1}$ gives $m_{uu}+Z(u)=0$, so $u\in V_0$ as well.
\end{proof}

\begin{claim}
  \label{C:double_infinity}
  If $u\in V_0$, some edge entering $u$ has ratio in $\{0,\infty\}$, then
  one edge $e$ entering $u$ has $r_e=r_{e^{-1}}=\infty$, and all other
  edges entering $u$ have ratio 0.
\end{claim}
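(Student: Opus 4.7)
The plan is to extract from the hypothesis an edge $f = (u,y)$ leaving $u$ with $r_f = \infty$, then to show its inverse $f^{-1}$ also has infinite ratio using $u \in V_0$, and finally to invoke condition~(c) of \defref{generalized-ratio-system} to force the remaining edges entering $u$ to have ratio~$0$.

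First I would produce $f$. Write the given edge as $e_0 = (v_0, u)$. If $r_{e_0} = 0$, condition~(b) supplies an edge $f = (u,y)$ with $e_0 \to f$ and $r_f = \infty$ directly. If $r_{e_0} = \infty$, recursion~\eqref{eq:recursion} applies at $e_0$ (valid since $r_{e_0} \ne 0$): the $m_{e_0^{-1}}/r_{e_0}$ term vanishes by the $c/\infty = 0$ convention, and condition~(c) forbids any edge $e'$ with $e_0 \to e'$ from carrying an infinite ratio, so
\[
0 = m_{uu} + \sum_{y \ne v_0} m_{(u,y)} r_{(u,y)}.
\]
If $r_{(u,v_0)}$ were finite, adjoining the $y = v_0$ term would make the right-hand side equal to $m_{uu} + Z(u)$; subtracting the $V_0$-relation would then force $m_{(u,v_0)} r_{(u,v_0)} = 0$, contradicting $r_{(u,v_0)} = r_{e_0^{-1}} \ne 0$ from condition~(c). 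Hence $r_{(u,v_0)} = \infty$, and I take $f = e_0^{-1}$.

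Next I would show $r_{f^{-1}} = \infty$. This is already in hand in the second case above, so the task is genuine only in the $r_{e_0} = 0$ case. Condition~(c) forces $f$ to be the \emph{unique} edge leaving $u$ with infinite ratio (two such edges would force a single edge entering $u$ to have ratio both $0$ and nonzero). Supposing toward contradiction that $r_{f^{-1}}$ is finite — it is nonzero by condition~(c) — and applying \eqref{eq:recursion} at $f^{-1} = (y, u)$, condition~(c) again gives that every $r_{(u,v')}$ with $v' \ne y$ is finite, so
\[
0 = m_{uu} + \frac{m_{(u,y)}}{r_{f^{-1}}} + \sum_{v' \ne y} m_{(u,v')} r_{(u,v')} = m_{uu} + \frac{m_{(u,y)}}{r_{f^{-1}}} + Z(u).
\]
Combined with $m_{uu} + Z(u) = 0$, this forces $m_{(u,y)} = 0$, contradicting the connectedness of $M$.

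The remaining claim is immediate: condition~(c) applied to $f$ gives $r_e = 0$ for every edge $e \to f$ with $e \ne f^{-1}$, and these $e$ are exactly the edges entering $u$ other than $f^{-1}$. The main obstacle I anticipate is the finite-nonzero case for $r_{f^{-1}}$; the key to resolving it is lining up the recursion sum at $f^{-1}$ with the definition of $Z(u)$, which in turn relies on a careful double application of condition~(c) — first to show $f$ is the unique edge leaving $u$ with infinite ratio, and then to control the summands in the recursion itself.
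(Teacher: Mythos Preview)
Your proof is correct and follows essentially the same route as the paper's. Both arguments split on whether the given entering edge has ratio $0$ or $\infty$, produce an outgoing edge of infinite ratio (via condition~(b) in the first case, directly in the second), and then apply \eqref{eq:recursion} at the inverse edge together with $m_{uu}+Z(u)=0$ to force that inverse to have infinite ratio; the final appeal to condition~(c) is identical. Your explicit justification of the uniqueness of the outgoing infinite-ratio edge is a point the paper uses implicitly when identifying the recursion sum with $Z(u)$.
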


\begin{proof}
  If $r_{vu}=0$ for some $v$, then by (b) there is an $e=(w,u)$ with
  $r_{e^{-1}}=\infty$. This implies by (c) that all edges $e'$ entering $u$
  except for $e$ have $r_{e'}=0$. By (c), $r_e\ne0$. Applying (a) to $e$
  reduces to $m_{uu} + m_{e^{-1}}/r_e + Z(u)=0$. Since $u\in V_0$, we must
  have $r_e=\infty$.

  If $r_e=\infty$ for some $e=(v,u)$, then we claim $r_{e^{-1}}=\infty$: if
  it were finite, applying (a) to $e$ would give $m_{uu} + Z(u) -
  m_{e^{-1}}r_{e^{-1}}=0$. This implies $r_{e^{-1}}=0$, contradicting (c).
  Since $r_e=r_{e^{-1}}=\infty$, again by (c) all edges $e'$ entering $u$
  except for $e$ have $r_{e'}=0$.
\end{proof}

\begin{claim}
  $V_0\subset V_\infty$.
\end{claim}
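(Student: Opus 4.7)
The plan is to argue by contradiction: assume there is some $u\in V_0\setminus V_\infty$ and derive a conflict with $\alpha(r)<1$. The starting point is \clmref{double_infinity}, which already shows that if $u\in V_0$ and some edge entering $u$ has ratio in $\{0,\infty\}$, then $u\in V_\infty$; so under $u\notin V_\infty$, every edge incident to $u$ has $r_e\ne 0,\infty$.

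Next I show $r_e r_{e^{-1}}=1$ for each such $e=(u,v)$. Since $r_{e^{-1}}\ne 0$, \eqref{eq:recursion} applied at $e^{-1}$ gives
\[
0 = m_{uu} + m_e/r_{e^{-1}} + \sum_{e^{-1}\to e''} m_{e''} r_{e''}.
\]
Because $u\notin V_\infty$ the sum equals $Z(u)-m_e r_e$; subtracting the identity $m_{uu}+Z(u)=0$ (from $u\in V_0$) and dividing by the nonzero $m_e$ leaves $r_e r_{e^{-1}}=1$.

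Now I propagate to each neighbor $v$. First, $v\notin V_\infty$: if some $f=(v,w)$ had $r_f=\infty$, then condition~(c) forces $r_{(x,v)}=0$ for every $x\ne w$ adjacent to $v$; taking $x=u$ (when $u\ne w$) gives $r_{(u,v)}=0$, contradicting $r_{(u,v)}r_{(v,u)}=1$, while if $u=w$ then $r_f=r_{(v,u)}=\infty$, again contradicting that identity. Second, $v\in V_0$: applying \eqref{eq:recursion} at $e=(u,v)$, the term $m_{(v,u)}/r_{(u,v)}$ cancels with the contribution $-m_{(v,u)}r_{(v,u)}$ extracted from $Z(v)$ (using $r_{(u,v)}r_{(v,u)}=1$), leaving $m_{vv}+Z(v)=0$. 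Iterating along paths in the connected graph $H$, every vertex of $H$ lies in $V_0\setminus V_\infty$ and every edge $e$ satisfies $r_e r_{e^{-1}}=1$ with $r_e$ finite nonzero; in particular $r$ is a \finiteRS.

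For the contradiction, assume $H$ is not a tree (the tree case makes $\tH=H$ finite, and the statement reduces to routine finite-dimensional linear algebra). Then $H$ has a non-backtracking closed walk $\vec p$, and $r(\vec p)r(\vec p^{-1})=\prod_{e\in\vec p}(r_e r_{e^{-1}})=1$, so without loss of generality $|r(\vec p)|^2\ge 1$. Iterating $\vec p$ $k$ times yields a contribution of $|r(\vec p)|^{2k}\ge 1$ to a diagonal entry of $R^{k|\vec p|}$, contradicting the fact that $\alpha(r)=\rho(R)<1$ forces $R^n\to 0$ entrywise. The main technical content is the propagation step, where the case analysis ruling out $v\in V_\infty$ must carefully play condition~(c) against the just-established identity $r_{(u,v)}r_{(v,u)}=1$ to eliminate each subcase.
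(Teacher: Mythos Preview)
Your proof is correct and follows essentially the same route as the paper's: assume $u\in V_0\setminus V_\infty$, deduce that every edge at $u$ has finite nonzero ratio with $r_e r_{e^{-1}}=1$, propagate this to all of $H$ by connectedness, and then contradict $\alpha(r)<1$ via a non-backtracking cycle in $H$. The only difference is packaging: the paper cites \clmref{T0connected} for the identity $r_e r_{e^{-1}}=1$ and the propagation of $V_0$, and invokes \clmref{double_infinity} again for $v\notin V_\infty$, whereas you unfold the same computations directly from \eqref{eq:recursion} and condition~(c). Your explicit aside about the tree case is something the paper simply leaves implicit (it too just ``takes a non-backtracking cycle'').
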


\begin{proof}
  Suppose that $u\in V_0\setminus V_\infty$. Since there are no edges with
  infinite ratio leaving $u$, by \clmref{double_infinity} all edges
  entering $u$ have finite nonzero ratios. By \clmref{T0connected} this
  implies that all of $u$'s neighbours are in $V_0$, and by
  \clmref{double_infinity} they are all in $V_0\setminus V_\infty$.

  It follows that if $V_0\setminus V_\infty$ is nonempty then it must be
  the entire tree. If $V_0\setminus V_\infty = T$ then for all edges $r_e
  r_{e^{-1}}=1$. Then (as in \clmref{vanish_at_root}) taking a
  non-backtracking cycle, $\vec p$, in $H$ gives $r(\vec p) r(\vec p^{-1})
  = 1$, and so $\alpha\ge 1$ --- a contradiction.
\end{proof}

\begin{claim}
  For any $u$, $(Mf_u)(u) \ne 0$.
\end{claim}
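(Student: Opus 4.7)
The plan is to directly combine the preceding two claims, namely \clmref{factor} and the claim that $V_0\subset V_\infty$. By \clmref{factor}, the value $(Mf_u)(u)$ is either $m_{uu^*}$ (when $u\in V_\infty$) or $m_{uu}+Z(u)$ (when $u\notin V_\infty$), so the problem reduces to showing each of these is nonzero in its respective case.

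First I would handle the case $u\in V_\infty$. Here $u^*$ is a neighbour of $u$, so $(u,u^*)$ is an edge of $T$. Since we reduced at the start of Section~5 to the situation where $M$ is connected, $m_{e}\ne0$ for every directed edge $e\in E(T)$, and in particular $m_{uu^*}\ne 0$. Thus $(Mf_u)(u)\ne 0$ by \clmref{factor}.

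Next I would handle the case $u\notin V_\infty$. By \clmref{factor} it suffices to show $m_{uu}+Z(u)\ne 0$, i.e., $u\notin V_0$. But the previous claim states exactly that $V_0\subset V_\infty$; since $u\notin V_\infty$, we conclude $u\notin V_0$, completing the proof.

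There is no genuine obstacle here: all the work has already been done in the two preceding claims and in our standing assumption that $M$ is connected. The only thing to be careful about is distinguishing the two cases cleanly and invoking the correct branch of \clmref{factor} in each, along with recalling that connectedness of $M$ rules out $m_{uu^*}=0$.
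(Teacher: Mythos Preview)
Your proof is correct and follows essentially the same approach as the paper: split into the cases $u\in V_\infty$ and $u\notin V_\infty$ via \clmref{factor}, then use connectedness of $M$ for the first case and the inclusion $V_0\subset V_\infty$ for the second. The paper's version is simply a terser one-sentence rendition of the same argument.
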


\begin{proof}
  By \clmref{factor}, either $u\in V_\infty$ and then $(Mf_u)(u)
  =m_{uu^*}\ne0$, or $u\notin V_\infty$. In the latter case $u\notin V_0$,
  and therefore $(Mf_u)(u) = m_{uu}+Z(u) \ne 0$.
\end{proof}

It follows that we may define
\begin{equation}\label{eq:tG}
  \tG(u,x) = \frac{f_u(x)}{(M f_u)(u)},
\end{equation}
so that $M \tG(u,\cdot)=\delta_u$. It remains to prove the generalized form
of \clmref{conv}:

\begin{claim}
  Given a {\generalizedRS} with $\alpha(r)<1$, for every $h\in l^2(T)$
  there exists a $\theta\in l^2(T)$ such that $M\theta=h$.
\end{claim}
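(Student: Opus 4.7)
The plan is to mimic the proof of \clmref{conv}, defining $\theta(v) = \sum_{u\in V(T)}\tG(u,v)h(u)$ using the candidate Green function from \eqref{eq:tG}, and verifying (i) the sum converges absolutely, (ii) $M\theta=h$ pointwise, and (iii) $\theta\in l^2(T)$. For (i), absolute convergence follows by Cauchy--Schwarz from a uniform-in-type bound $\sum_v|\tG(u,v)|^2 \le C$, which comes from \eqref{eq:ratios_in_L2} together with the Perron--Frobenius decay $(R^n)_{ef} = O((\alpha(r)+\eps)^n)$. Since this decay is geometric, the same reasoning gives uniform-in-type bounds on $\|\rho(u,\cdot)^k\tG(u,\cdot)\|_2$ for any $k$, the generalization of \clmref{columns-in-l2}. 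For (ii), locality of $M$ combined with the absolute convergence of the sum lets us interchange $M$ with the summation, yielding $(M\theta)(v) = \sum_u \delta_u(v)h(u) = h(v)$.

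The main obstacle is step (iii). Following \clmref{conv}, bound $\|\theta\|^2 \le \sum_{u,w}|h(u)h(w)|H(u,w)$ with $H(u,w) = \sum_v|\tG(u,v)\tG(w,v)|$, and decompose according to the branching vertex $x\in[u,w]$ of the paths $[v,u]$ and $[v,w]$. The crucial ingredient is an analog of the split inequality \eqref{eq:split}: for $y$ on the path $[x,z]$, one needs $|\tG(x,y)\tG(y,z)|$ to be comparable to $|\tG(x,z)|$ up to multiplicative constants determined by the (finitely many) types. In the ordinary case $y\notin V_\infty$, the identity $\tG(x,y)\tG(y,z) = \tG(x,z)/c_y$ with $c_y = m_{yy}+Z(y)$ follows directly from the multiplicative interpretation of $r(\vec p)$ and the normalization in \eqref{eq:tG}; since $c_y$ is bounded and bounded away from zero ($y\notin V_0$ means $c_y\ne 0$, and types are finite), the two-sided bound holds.

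The delicate case is $y\in V_\infty$, where $\tG(y,y)=0$ and $f_y$ is supported only on the subtree past $y^*$. Splitting literally at $y$ can give $\tG(x,y)=0$ while $\tG(x,z)\ne0$. The fix is to split at $y^*$ when $y^*\in[y,z]$: by condition (c) of \defref{generalized-ratio-system}, the edge $e_{\mathrm{in}}=(p,y)$ entering $y$ on the path has $r_{e_{\mathrm{in}}}=0$, and by \eqref{eq:r_path} the zero--infinity pair at the junction contributes the bounded nonzero factor $-m_{yy^*}/m_{yp}$; the normalizer $c_{y^*}$ is also bounded and nonzero since $y^*\notin V_0\setminus V_\infty$. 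Hence $|\tG(x,y^*)\tG(y^*,z)|$ is comparable to $|\tG(x,z)|$ up to type-dependent constants. The symmetric sub-case ($y^*\in[x,y]$) is handled by splitting at the corresponding neighbor on the other side. Either way, one obtains a modified split inequality with uniform constants.

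With the split in hand, the rest of the argument is identical to \clmref{conv}: substitute the modified split into $H(u,w)$, further split at the last common ancestor $z$ of $u,w$ on the tree, apply Cauchy--Schwarz with the polynomial weights $\rho'(z,x)\rho'(x,u)/\rho'(z,w)$, and sum in the same order over $u,x,z,w$ (using the $l^2$ decay with polynomial weights established above) to conclude $\|\theta\|^2 \le c'\|h\|_2^2 < \infty$. This produces the required $\theta$ and completes the proof.
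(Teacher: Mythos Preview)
Your proposal is correct and follows essentially the same approach as the paper: shift the splitting point from $y$ to $y^*$ (the unique neighbour with $r_{yy^*}=\infty$, or $y$ itself when $y\notin V_\infty$) to salvage the split inequality \eqref{eq:split}, then rerun the argument of \clmref{conv} with $x^*,z^*$ replacing $x,z$ in every occurrence of $\tG$. One minor imprecision: $y^*$ is unique, so there is no ``corresponding neighbour on the other side''---the paper uses the same $y^*$ regardless of whether it lies toward $x$, toward $z$, or off the path $[x,z]$, and then observes that the map $x\mapsto x^*$ is at most $(\dmax+1)$-to-$1$ with $|\rho(x^*,y^*)-\rho(x,y)|\le 2$, which is precisely what carries the weighted $l^2$ bounds of \clmref{columns-in-l2} over to the starred version \eqref{eq:tGbound}.
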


The proof is very similar to the proof of \clmref{conv}, and we only point
out the differences.

\begin{proof}
  Equation~(\ref{eq:split}) only fails if $G(y,y)=0$. This problem is
  easily overcome by shifting the splitting point from $y$ to $y^*$, which
  is the unique neighbour of $y$ satisfying $r_{yy^*}=\infty$. When
  $G(y,y)\ne 0$ we define $y^*$ to be $y$. We have for suitable $K_1,K_2$
  and any $y\in[x,z]$
  \begin{equation}\label{eq:split_gen} \tag{3$'$}
    K_1 | \tG(x,z) |
    \le | \tG(x,y^*)\tG(y^*,z) |
    \le K_2 | \tG(x,z) |.
  \end{equation}
  This follows from the definition of $\tG$ and the fact that $y$ has only
  finitely many types.

  As in the preceding section, we define $H(u,w) = \sum_v \left|
    \tG(u,v)\tG(w,v) \right|$. Using \eqref{eq:split_gen} gives the bound
  \[
  H(u,w) \le c \sum_{\Tuxw} |\tG(u,x^*)\tG(x^*,w)|.
  \]
  In all subsequent estimates $x^*$ and $z^*$, respectively, take the place
  of $x$ and $z$ in all instances of $\tG$, but $x$ and $z$ remain
  unchanged in the ``tree sum'' notation and in $\rho'$.

  The only difference is that in place of \clmref{columns-in-l2} we need
  bounds of the form
\begin{equation}\label{eq:tGbound}
\sum_x \rho(u,x)^2 \left( |\tG(u,x^*)|^2 + |\tG(x^*,u)|^2\right) < c.
\end{equation}
Such bounds are an easy consequence of \clmref{columns-in-l2}: The map
$x\mapsto x^*$ maps no more than $\dmax+1$ $x$'s to each $x^*$. Since also
$|\rho(x^*,y^*)-\rho(x,y)|\le 2$, replacing some of the variables by their
starred versions introduces at most a constant factor into the bounds at
each stage of the computation.
\end{proof}

\section{Examples}\label{se:examples}

We wish to analyze two examples to highlight the necessity of considering
{\generalizedRS}s. Each of these examples demonstrates a general point.
Then we give a third example, which seems to be the ``smallest interesting
example'' of a graph with a non-regular universal cover. We describe the
calculations needed to determine its non-backtracking spectrum. In each of
these examples, the finite graph is a simple graph. In the next section we
shall show how each of these can be viewed as a covering of an even smaller
graph by using self-loops, multiple edges, or ``passing'' to a pregraph.

\subsection{Example 1: Fig{\`a}-Talamanca and Steger} \label{se:steger}

\cite{FigSteg} contains various examples of graphs where $G(u,u)=0$ for
some $u$ when $M$ is the weighted adjacency matrix of the tree. Consider
$K_4$, the complete graph on four vertices, whose six edges are written as
the disjoint union of three perfect matchings, $E_1,E_2,E_3$. Let the edges
in $E_i$ have weight $p_i$, for some positive real $p_1,p_2,p_3$. Without
loss of generality $p_1\ge p_2\ge p_3$. By symmetry (i.e., using
Proposition~\ref{pr:symmetry}), for any ratio system $r$, the ratios $r_e$
of all directed edges of $E_i$ are equal. Call the three ratios
$r_1,r_2,r_3$, and consider the ratio system is $r_1=\infty$, $r_2=r_3=0$.
We have $r(e_k,e_1)=-p_k/p_1$; the condition $\alpha(r)<1$ amounts to
\[
(p_2/p_1)^2+(p_3/p_1)^2 < 1.
\]
Thus the adjacency matrix for the tree is invertible with $G(u,u)=0$ if
$p_1^2 > p_2^2 + p_3^2$. (In \cite[Lemma~1.4]{FigSteg} it is shown that if
$p_1^2\le p_2^2+p_3^2$ then the adjacency matrix is not invertible.)

It is of interest to note that the relative kernel with respect to an $E_1$
edge is only one-dimensional (when $p_1^2>p_2^2+p_3^2$).

\subsection{Example 2: $(2,3)$-bipartite graphs}

Consider the adjacency matrix of a $(2,3)$-bipartite graph, i.e., a
bipartite graph with vertex set $V_2\cup V_3$ such that each vertex in
$V_k$ is of degree $k$. Then consider $r_{32}=\infty$, $r_{23}=0$, where
$e_{23}$ is an edge from $V_2$ to $V_3$ and $e_{32}$ the opposite. This
fails to be a generalized ratio for the adjacency matrix only at
condition~(c) of the definition. Furthermore, if we tried to omit
condition~(c), then this ratio system would have $\alpha<1$: A vertex of
degree 3 in the universal cover has two children so
$r(e_{32},e_{23})=-1/2$. This gives rise to $L^2$-functions, $f$, with
$Mf=\delta_r$ for any vertex $r$ of degree 2 (we can take $f$ to have value
0 at all degree two vertices, and value $-(-2)^{-a}$ at all vertices at odd
distance $2a-1$ to $r$).

It is easy to see that the adjacency matrix is not invertible here; indeed,
consider a long path in the tree, and a function supported on the degree
two vertices of this path, alternating $\pm 1$; this function, $f$, has
$Mf$ bounded but $\|f\|$ unbounded as the path length tends to infinity.
This example shows that condition (c) in the definition of a
{\generalizedRS} cannot be omitted from Theorem~\ref{T:main}.

\subsection{Example 3: Case study of $P_{1,2,2}$}

We consider here non-backtracking walks on lifts of the graph $P_{1,2,2}$
consisting of three paths between a pair of vertices $u,u'$ of lengths
$1,2,2$. Thus $u,u'$ have degree 3 and there are two other vertices,
$v,v'$, of degree 2. This is the same as $K_4$ with an edge removed.

To find the spectrum of $B(\widetilde P_{1,2,2})$ we need to determine
invertibility of $M=Q_\lam$. Let us look for ratio functions $r_e$ that
respect all the graph's symmetries, i.e.\ $r_{uv}=r_{uv'}=\cdots$. There
are only three distinct ratios:
\[
  x = r_{uv}  \qquad  y = r_{uu'}  \qquad  z = r_{vu}.
\]
Thus $y$ appears on two directed edges, and $x,z$ on four each. The
equations we need to satisfy are
\begin{align*}
  (\lam^2+1) x + \lam (1+xz) =0,     \\
  (\lam^2+2) y + \lam (1+2xy) =0,    \\
  (\lam^2+2) z + \lam (1+xz+yz) =0.
\end{align*}
Using the first equation to eliminate $z$ and the second to eliminate $x$
yields:
%
\[
  \left( y + \lam^{-1} \right) \left( (2\lam^4+2\lam^2+4) y^2
    + \lam^3(\lam^2+1) y - \lam^2(\lam^2+1) \right) = 0.
\]
This leads to the trivial solution $x=y=z=-\lam^{-1}$ and two non-trivial
solutions for $x,y,z$ (or to be precise, two branches of one solution). The
condition $\alpha(r)<1$ reduces to $x^2 z^2 (1+2y^2)<1$. A computer plot of
the $\lambda$ where none of the solutions has $\alpha(r)<1$ is shown in
Figure~\ref{fig-tree-spec} (the shaded region).

\section{Self-loops and Pregraphs via Finite Tree Quotients}
\label{se:self}\label{se:pregraph}

We shall show that Theorem~\ref{T:main} can be generalized to universal
covers of all of the following structures: graphs/pregraphs with multiple
edges, self-loops, boundaries (with boundary conditions), edge and vertex
weights, and edge lengths (see \cite{Fri93}). The common theme is that all
these structure admit some sort of ``universal cover,'' $T$ that is
sufficiently symmetric that a mild generalization of Theorem~\ref{T:main}
holds.

First, note that Definition~\ref{D:generalized-ratio-system} makes sense
for any graph homomorphism $\pi\from T \to H$, regardless of whether or not
$H$ is finite (and even whether or not $\pi$ is a covering map).

\begin{defn}
  A {\em tree quotient} is a tree, $T$, along with a group, $\cG$, acting
  on it; we denote this $T/\cG$. By $V(T/\cG)$ we mean the $\cG$ orbits of
  $V(T)$, and similarly for $E(T/\cG)$. The action of $\cG$ is cofinite if
  $V(T/\cG)$ and $E(T/\cG)$ are finite; alternatively we say $T/\cG$ is
  {\em finite}. By a symmetric local operator on $T/\cG$ we mean a
  symmetric local operator on $T$ that is invariant under $\cG$. By a
  {\generalizedRS} on $T$ we mean a {\generalizedRS} as in
  Definition~\ref{D:generalized-ratio-system} with $H$ there taken to be
  $T$; by a {\generalizedRS} on $T/\cG$ we mean a {\generalizedRS} on $T$
  that is invariant under $\cG$, i.e., $r$ can be viewed as a function
  $r:E(T/\cG)\to \C\cup\{\infty\}$.
\end{defn}

Any finite graph and pregraph as described in \cite{Fri93,FriTill}
(possibly with boundary, self-loops, multiple edges, vertex and edge
weights, and edge lengths) admits a universal cover, $\pi\from T\to G$; $T$
is a tree (a graph) that inherits vertex and edge weights and lengths from
$G$ (boundary edges may be discarded with the Laplacian or adjacency matrix
modified appropriately). By the universal property of $T$, for any two
vertices in $T$ above the same $G$ vertex, $T$ admits an automorphism from
taking one vertex to the other. It follows that if $\cG$ is the group of
all automorphisms of $T$ over $G$, then $T/\cG$ is finite and has at most
as many edges and vertices as $G$. Hence to generalize
Theorem~\ref{T:main} to all the above situations it suffices to consider
finite tree quotients $T/\cG$.

Consider a symmetric, local operator, $M$, on a tree quotient, $T/\cG$. The
proof of Theorem~\ref{T:main} shows that a Green's function, $G$, for $M$
gives rise to generalized periodic ratios on $T$;
Proposition~\ref{pr:symmetry} shows that generalized periodic ratios also
live on $T/\cG$. Furthermore, given the periodic ratios $\{r_e\}$ from $G$,
we can reconstruct $G$ via \eqref{eq:fu} and \eqref{eq:tG}.

\begin{defn}
  A {\em classifying condition} for a tree quotient, $T/\cG$, is a
  condition $C=C(r)$ or even $C=C(r,M)$ on generalized periodic ratios,
  $r$, such that $C(r,M)$ is satisfied iff $r$ arises from the Green
  function, $G$, of an invertible, symmetric, local operator, $M$, on
  $T/\cG$.
\end{defn}

Given a tree quotient, we wish to find a classifying condition that is
simple to check. Theorem~\ref{T:main} says that if $\pi\from T\to H$ is a
universal cover of a finite graph, $H$, and $\cG$ is the group of
automorphisms of $T$ over $H$, then ``$\alpha(r)<1$'' is a classifying
condition for $T/\cG$.

Our first task is to generalize the matrix $R$ that describes $\alpha$ on a
tree quotient, $T/\cG$. For $e,f\in E(T)$ with $r_e,r_f\ne 0$, we define
$R_{e,f}$ as in \eqref{eq:defineR}; for $a,b\in E(T/\cG)$ pick an $e$ with
$e\in a$ and set
\[
R_{a,b} = \sum_{f\in b} R_{e,f}.
\]

In this setting, our proof of Theorem~\ref{T:main} shows the following:

\pagebreak

\begin{thm}
  Let $C=C(r)$ be a condition on generalized periodic ratios of a tree
  quotient, $T/\cG$, such that
  \begin{enumerate} \renewcommand{\labelenumi}{(\alph{enumi})}
  \item if $r$ arises from a Green function, then $C(r)$ is satisfied,
  \item if $r$ satisfies $C(r)$, then $(Mf_u)(u)\ne 0$ for all $u$ (with
    $f$ as in \eqref{eq:fu}), and
  \item if $r$ satisfied $C(r)$ then $\tG$ is a bounded kernel, i.e.,
    the map
    \[
    h(\,\cdot\,)\mapsto \sum_{u\in V(T)} \tG(u,\,\cdot\,) h(u),
    \]
    is bounded in $L^2$.
  \end{enumerate}
  Then $C$ is a classifying condition for $T/\cG$. Furthermore,
  \begin{enumerate}
  \item $(Mf_u)(u)\ne 0$ for all $u$ as long as there can exist no sequence
    of distinct vertices, $v_0,v_1,\ldots$ in $T$ for which
    $|r([v_0v_i])|\ge 1$ for all $i\ge 1$, and
  \item any function $\tG\from V(T)\times V(T)\to \C$ is a bounded
    kernel provided that it satisfies the bounds \eqref{eq:split} and
    \eqref{eq:tGbound}.
  \end{enumerate}
\end{thm}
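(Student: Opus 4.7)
The strategy is to abstract the proof of \thmref{main} given in Sections~6--7, checking that each step depends on exactly one of the three hypotheses (a)--(c). The direction ``$M$ invertible $\Rightarrow C(r)$'' is almost immediate: the construction $r_e = G(x,v)/G(x,u)$ of Section~7 produces a \generalizedRS, which is $\cG$-invariant by Proposition~\ref{pr:symmetry} and so descends to a function on $E(T/\cG)$; hypothesis~(a) then gives $C(r)$.

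For the converse, given $r$ satisfying $C(r)$, define $f_u$ by \eqref{eq:fu} and $\tG$ by \eqref{eq:tG}; hypothesis~(b) ensures that the denominator $(Mf_u)(u)$ is nonzero, so $\tG$ is well-defined. The identities in \defref{generalized-ratio-system}, together with the purely algebraic manipulations of Claims~\ref{C:vanish_at_root} and~\ref{C:factor} (which do not use $\alpha(r)<1$), force $M\tG(u,\cdot) = \delta_u$. Hypothesis~(c) then makes the map $T\colon h \mapsto \sum_u \tG(u,\cdot) h(u)$ a bounded operator with $MT=I$ on a dense subspace, hence on all of $l^2(V(T))$. Injectivity of $M$ is obtained as in \clmref{kernel}: if $Mf=0$ then $\langle f,\overline{M}g\rangle=0$ for every $g$, and using that $\overline{r}$ is a \generalizedRS~for $\overline{M}$ satisfying the conjugate condition (trivially true when $M$ is real symmetric, as in all the applications in this paper), the analogue of $T$ for $\overline{M}$ is surjective and $f=0$. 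Consequently $M$ is invertible with $M^{-1}=T$ and Green function $\tG$, so $r$ is indeed the ratio system coming from an invertible operator.

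For the furthermore part~(1), replay the arguments of Claims~\ref{C:T0connected} and~\ref{C:double_infinity} and the $V_0 \subset V_\infty$ argument from Section~7: if $(Mf_u)(u)=0$ for some $u$, these force every edge of some infinite subtree of $T$ to satisfy $r_e r_{e^{-1}}=1$. Along any ray $v_0=u,v_1,v_2,\dots$ in this subtree, the partial products satisfy $|r([v_0,v_i])|\cdot|r([v_i,v_0])|=1$, so choosing the direction of the ray appropriately (or passing to a subsequence via compactness) supplies an infinite sequence of distinct vertices with $|r([v_0,v_i])|\ge 1$, contradicting the hypothesis. For the furthermore part~(2), the proof of \clmref{conv} (with the zero/infinity modification of Section~7) uses only the inequalities \eqref{eq:split}/\eqref{eq:split_gen} together with the weighted $l^2$-decay \eqref{eq:tGbound} (which is precisely the generalization of \clmref{columns-in-l2}); so any $\tG$ satisfying these bounds defines a bounded kernel operator by the same computation.

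The main obstacle will be the extraction in part~(1): the original proof derived a contradiction from $\alpha(r)\ge 1$, whereas the abstract statement demands a concrete sequence of distinct vertices with $|r([v_0,v_i])|\ge 1$ for \emph{all} $i$. This will require care in choosing the direction of a suitable tree ray and, if needed, a compactness argument to secure an infinite subsequence along which the partial products stay above $1$ rather than oscillate across it.
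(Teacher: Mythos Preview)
Your approach matches the paper's: this theorem is stated there without a separate proof, the paper saying only that ``our proof of \thmref{main} shows'' it, so abstracting Sections~6--7 is exactly what is intended. The main claim and furthermore~(2) go through as you describe. (Your worry about injectivity is unnecessary: once $MT=I$ with $T$ bounded, conjugating entrywise gives $\overline{M}\,\overline{T}=I$, so $\overline{M}$ is surjective and hence $M$ is injective --- no extra symmetry hypothesis on $C$ is needed.)

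The one place your plan diverges from the paper is furthermore~(1), and your proposed extraction does not work. The paper's argument is not a ray-plus-compactness one; it is the closed-walk argument already present in \clmref{vanish_at_root} and in the proof that $V_0\subset V_\infty$. Once every edge has $r_e r_{e^{-1}}=1$, one takes a non-backtracking closed walk $\vec p$ in the quotient; then $r(\vec p)\,r(\vec p^{\,-1})=1$, so after possibly reversing, $|r(\vec p)|\ge 1$, and lifting $\vec p$ repeated $k$ times produces distinct vertices $v_0,v_n,v_{2n},\ldots$ in $T$ with $|r([v_0,v_{kn}])|=|r(\vec p)|^k\ge 1$, which \emph{is} the forbidden sequence. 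Your ray approach cannot be salvaged in general: along a single ray one can have $|r([v_0,v_i])|<1$ for every $i\ge 1$, and in fact on the $3$-regular tree with a distinguished root, outward ratios $1/2$, inward ratios $2$, and diagonal entries chosen so that \eqref{eq:recursion} holds, one has $(Mf_u)(u)=0$ at every vertex while \emph{no} vertex admits an infinite forbidden sequence. So furthermore~(1) genuinely relies on the quotient having a non-backtracking closed walk --- automatic in the cofinite setting the paper immediately specializes to --- and you should invoke that rather than attempt a compactness argument.
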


Now let $\alpha(r)$ be the spectral radius of the operator $R$ defined
above as an operator on $l^2(V(T/\cG))$. If $\cG$ acts cofinitely on $T$,
then $R$ is a finite real matrix with non-negative entries, and a periodic
ratio arising from a Green function must satisfy $\alpha(r)<1$ since the
Green function is in $l^2$, using the analogue of \eqref{eq:ratios_in_L2}.
Conversely, a generalized periodic ratio with $\alpha(r)<1$ yields that
\[
R+ 2R^2+ 3R^3+\cdots
\]
is a bounded operator, from which we conclude \eqref{eq:tGbound} and that
there cannot exist distinct $v_0,v_1,\ldots\in V(T)$ with $|r([v_0v_i])|\ge
1$; the cofinite action on $\cG$ on $T$ shows that $((Mf_u)(u))^{-1}$
and $r_e$ are uniformly bounded, yielding \eqref{eq:split}. We conclude the
following theorem.

\begin{thm}
  Let $\cG$ act cofinitely on $T$. The condition $\alpha(r)<1$, defined
  above, is a classifying condition for $T/\cG$.
\end{thm}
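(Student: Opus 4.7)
The plan is to invoke the preceding theorem with the classifying condition $C(r)$ taken to be ``$\alpha(r)<1$''. I would need to check conditions (a), (b), (c) of that theorem, but for (b) and (c) I can shortcut via the furthermore clauses (1) and (2): it suffices to show that no sequence of distinct vertices $v_0,v_1,\ldots\in V(T)$ can satisfy $|r([v_0v_i])|\ge1$ for all $i\ge 1$, and that $\tG$ obeys \eqref{eq:split} and \eqref{eq:tGbound}.

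For (a), I would repeat the proof of \eqref{eq:ratios_in_L2} from Section~7 in the tree-quotient setting. Given a Green function $G$, fix an orbit $e\in E(T/\cG)$ with $r_e\ne 0$, lift it to an edge $\tilde e=(u,v)$ of $T$, and select $x\notin V(T_{\tilde e})\setminus\{u\}$ with $G(x,v)\ne 0$. Interpreting $(R^k)_{e,f}$ as a weighted sum over length-$k$ non-backtracking walks in the orbit graph starting with an edge succeeding $e$ and ending in orbit $f$ (valid because $r$ is $\cG$-invariant, so multiplicativity of $r$ along paths descends to $T/\cG$), the same manipulation yields
\[
\sum_{y\in V(T_{\tilde e})}|G(x,y)|^2 \;=\; |G(x,u)|^2 + |G(x,v)|^2 \sum_{f\in E(T/\cG)}\sum_{k=0}^\infty (R^k)_{e,f}.
\]
The left side is finite since $G(x,\cdot)\in l^2(V(T))$, and $R$ is a finite non-negative matrix, so Perron--Frobenius forces $\alpha(r)<1$.

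For (b) and (c), suppose $\alpha(r)<1$. Because $R$ acts on a finite-dimensional space, both $\sum_k R^k$ and $\sum_k k^2 R^k$ have finite entries (geometric decay swamps polynomial growth). The first series bounds $\sum_{i\ge 1}|r([v_0,v_i])|^2$ for any fixed $v_0$ and any sequence of distinct $v_1,v_2,\ldots$, so only finitely many $v_i$ can satisfy $|r([v_0v_i])|\ge 1$; this delivers condition~(b) via furthermore clause~(1). For (c), cofiniteness of $\cG$ implies the ratios $r_e$, the values $(Mf_u)(u)$, and the values $\tG(u,u^*)$ each take only finitely many values; combined with (b), $((Mf_u)(u))^{-1}$ is uniformly bounded, so the estimate \eqref{eq:split} follows from the path-multiplicative definition of $\tG$ given by \eqref{eq:fu}--\eqref{eq:tG}. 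The bound \eqref{eq:tGbound} follows by repeating the Section~7 argument while invoking convergence of $\sum_k k^2 R^k$ to supply the extra $\rho(u,x)^2$ factor.

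The main obstacle, largely bookkeeping, is transferring the path-counting interpretation of $(R^k)_{e,f}$ from a genuine covering map $\pi\from T\to H$ to the tree-quotient setting $T/\cG$. The key observation that makes this routine is that $\cG$-invariance of $r$ makes each product $r(\vec p)$ depend only on the orbit-type sequence of its edges; once noted, every estimate from Sections~6 and~7 translates essentially verbatim. A subtler point worth isolating is that $\cG$ need not act freely on $V(T)$ or $E(T)$, but only finiteness of the orbit sets is used, so nontrivial stabilizers cause no difficulty.
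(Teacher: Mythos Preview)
Your proposal is correct and follows essentially the same route as the paper: invoke the preceding theorem, verify (a) via the $l^2$ bound on the Green function through the analogue of \eqref{eq:ratios_in_L2}, and verify (b), (c) via the furthermore clauses using convergence of power series in $R$ together with cofiniteness to get uniform bounds on $r_e$ and $((Mf_u)(u))^{-1}$. The only cosmetic difference is that the paper phrases the key convergence as boundedness of $R+2R^2+3R^3+\cdots$, whereas you (more precisely for \eqref{eq:tGbound}) invoke $\sum_k k^2 R^k$; both are immediate once $\alpha(r)<1$ and $R$ is finite-dimensional.
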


We do not know a classifying condition when $\cG$ does not act cofinitely
on $T$.

\medskip

We now demonstrate how the trees in the three examples in the previous
section have smaller quotients, provided one is willing to pass to graphs
or pregraphs that may have self-loops and multiple edges.

Example~1 in the previous section covers the graph consisting of one vertex
with three half-loops, $e_1,e_2,e_3$, of respective weights $p_1\ge p_2\ge
p_3$. This is how it is presented in \cite{FigSteg}; they allow any number
of half-loops with weights $p_1\ge\dots\ge p_r$. The covering tree's
adjacency matrix is invertible iff $p_1^2 > p_2^2+\cdots+p_r^2$. The $r$
half-loop example is covered by any $r$-regular graph whose edges can be
partitioned into $r$ perfect matchings.

Example~2 covers the pregraph with two vertices, no self-loops, with one
vertex of degree two, and one vertex of degree three.

Example~3 covers the graph $H=(V,E)$, where $V=\{u,v\}$, where there are
two edges joining $u$ and $v$ and one half-loop at $u$.

\section{Concluding Remarks and Further Questions}

We have given a semi-algebraic characterization of invertible symmetric
local operators on a finite tree quotient. It would be interesting to know
what happens when the tree quotient is not finite and/or when the operator
is not symmetric (with respect to $l^2$ of any measure on the tree
vertices). For example, the non-backtracking {\em random} walk selects at
each step a uniform neighbour of $X_n$ different from $X_{n-1}$
\cite{ABLS,OW}.

Our characterization can be used, at least in principle, to compute the
various spectra of trees, including adjacency and Laplacian, and, less
obviously, non-backtracking. This may shed light on the questions mentioned
in the introduction. One interesting question, especially for
two-dimensional non-backtracking spectrum, is to know if there is a natural
spectral measure there, analogous to the Wigner semicircle law, as done by
McKay for regular base graphs. Specifically, we conjecture that for any
finite graph $H$ there is a measure $\mu$ supported on the non-backtracking
spectrum of the universal cover $\tH$, such that the uniform measure on
$\sigma(H^n)$ converges in probability (in a suitable topology) to $\mu$,
where $H^n$ is a random uniform $n$-lift of $H$ (possibly with some
corrections, say around $\pm 1$).

The proof of Theorem~\ref{T:alpha1general} is based on explicitly
constructing approximate eigenfunctions when there is a ratio system with
$\alpha=1$. It would be nice to be able to find a sequence of approximate
eigenfunctions of $M-\lambda$ for general $\lambda$ is $M$'s spectrum.

Finally we believe that for $\lambda$ on the boundary of the spectrum of
$M$ (as before), $M-\lambda I$, has a {\generalizedRS}, $r$, with
$\alpha(r)=1$. In particular, if $\{\lambda_n\}$ is a sequence of complex
numbers with a finite limit, $\lambda$, and $M$ is as usual, and $r_n$ is a
ratio system for $M-\lambda_n$, then $r_n$ has a limit point, $r$, where we
compactify $\C\cup\{\infty\}$ as usual. Is $r$ necessarily a ratio system?
In particular is it impossible to have $r_e=r_f=\infty$ for two edges,
$e,f$, with the same tail? If this does happen, might it still be possible
to define $\alpha$ for the limit point, $r$?

\bibliographystyle{alpha}
\bibliography{NBRW}

\end{document}